\newtheorem{theorem}{Theorem}[section]
\newtheorem{acknowledgement}[theorem]{Acknowledgement}
\newtheorem{corollary}[theorem]{Corollary}
\newtheorem{definition}[theorem]{Definition}
\newtheorem{lemma}[theorem]{Lemma}
\newtheorem{proposition}[theorem]{Proposition}
\newtheorem{remark}[theorem]{Remark}
\newcommand{\bh}{\mathcal{B}(\mathcal{H})}
\newcommand{\os}{\mathcal{O}(\mathcal{S})}
\title{The Schur-Horn theorem in von Neumann algebras}
\begin{document}
\date{}
\author{Mohan Ravichandran}
\address{Mimar Sinan university, Bomonti, Istanbul, Turkey - 34380}
\email{mohan.ravichandran@gmail.com}
\maketitle

\begin{center}

Dedicated to the memory of William Arveson(1934-2011)

\end{center}

\begin{abstract}
A few years ago, Richard Kadison thoroughly analysed the diagonals of projection operators on Hilbert spaces and asked the following question: Let $\mathcal{A}$ be a masa in a type $II_1$ factor $\mathcal{M}$ and let $A \in \mathcal{A}$ be a positive contraction. Letting $E$ be the canonical normal conditional expectation from $\mathcal{M}$ to $\mathcal{A}$, can one find a projection $P \in \mathcal{M}$ so that 
\[E(P) = A?\]
In a later paper, Kadison and Arveson, as an extension, conjectured a Schur-Horn theorem in type $II_1$ factors. In this paper, I give a proof of this conjecture of Arveson and Kadison. I also prove versions of the Schur-Horn theorem for type $II_{\infty}$ and type $III$ factors as well as finite von Neumann algebras. 

\end{abstract}
\section{Introduction}
The classical Schur Horn theorem\cite{SchPap}, \cite{HorPap}, relates the diagonal and eigenvalue lists of a hermitian matrix: Let $A$ be a positive semidefinite element of $M_{n}(\mathbb{C})$ and let $d = (d_1, d_2, \cdots, d_n)$ and $\lambda = (\lambda_1, \lambda_2, \cdots, \lambda_n)$ be the lists of diagonal entries and eigenvalues respectively, both sorted in non-increasing order. Then, the Schur-Horn theorem says that we must have 
\[d_1 + \cdots + d_k \leq \lambda_1 + \cdots + \lambda_k \quad  1 \leq k \leq n \quad \text{and} \quad  d_1 + \cdots + d_n = \lambda_1 + \cdots + \lambda_n.\] 

The above condition on the lists is denoted by saying that the diagonal list is majorized by the eigenvalue list, written $d \prec \lambda$. The Schur-Horn theorem states that further, given two positive lists $d, \lambda$ with $d \prec \lambda$, then there is a positive semi-definite matrix $A$ with eigenvalues $\lambda$ and diagonal $d$.

Majorization can also be defined for matrices. Given two positive operators $A, S$ in $M_{n}(\mathbb{C})$, we say that $A \prec S$ if the eigenvalue sequence of $A$ is majorized by the eigenvalue sequence of $S$. The Schur-Horn theorem can then be stated as saying that if $A$ is a diagonal positive matrix and $S$ a positive matrix so that $A\prec S$, then there is a unitary operator $U$ so that the diagonal of $USU^{*}$ is $A$. 

Majorization for matrices has the following alternate description due to Hardy, Littlewood and Polya\cite{HLPIne},
\begin{definition}[Majorization]
Given two self-adjoint operators $A, S$ in $M_n(\mathbb{C})$,  $A$ is majorized by $S$ iff 
\[\operatorname{Tr}(f(A)) \leq \operatorname{Tr}(f(S))\] for every continuous convex real valued function $f$ defined on a closed interval $[c,d]$ containing the spectra of both $A$ and $S$. 
\end{definition}
Majorization in type $II_1$ factors\cite{HiaiMa} is described analogously, with the trace on $M_n(\mathbb{C})$ in the definition replaced by the canonical trace $\tau$. 

Let $\mathcal{A}$ be a maximal abelian sefladjoint subalgebra(masa, in short) in a type $II_1$ factor $\mathcal{M}$; There is a unique trace preserving normal(weak* to weak* continuous) conditional expectation $E:\mathcal{M} \rightarrow \mathcal{A}$ that is in many ways analogous to the restriction mapping onto the diagonal for elements of $M_{n}(\mathbb{C})$. Arveson and Kadison\cite{ArvKad} showed that if $S$ is a positive operator in $\mathcal{M}$, then $E(S) \prec S$. This fact can also be deduced from Hiai's work on stochastic maps on von Neumann algebras\cite{HiaiMa}.

There are two natural generalizations of the Schur-Horn theorem to type $II_1$ factors. The first  originates in the standard interpretation of the Schur-Horn theorem as characterizing the set of all possible diagonals of a positive matrix. Let $\mathcal{U}(\mathcal{M})$ be the set of unitary operators in $\mathcal{M}$ and given an operator $S$, let $\mathcal{O}(S)$ be the norm closure of the unitary orbit of $S$, i.e 
\[\mathcal{O}(S) = \overline{\{U S U^{*}  \mid U \in \mathcal{U}(\mathcal{M})\}}^{||}.\] 
Two positive operators $A$ and $S$ in a type $II_1$ factor $\mathcal{M}$ are said to be \emph{equimeasurable}, denoted $A \approx S$, if $\tau(A^n) = \tau(S^n)$ for $n = 0, 1, 2, \cdots$. It is routine to see that the following are equivalent. 
\begin{enumerate}\label{EM}
 \item $A \approx S$. 
 \item  $A \in \os$. 
\end{enumerate}
The following result characterizing possible ''diagonals'' of positive operators, is the first main theorem in this paper.

\newtheorem*{theorem:associativity}{Theorem \ref{conj1}}
\begin{theorem:associativity}[The Schur-Horn theorem in type $II_1$ factors I]
Let $\mathcal{M}$ be a type $II_1$ factor and let $A, S \in \mathcal{M}$ be positive operators with $A \prec S$. Then, there is some masa $\mathcal{A}$ in $\mathcal{M}$ such that \[E_{\mathcal{A}}(S) \approx A.\]
\end{theorem:associativity}

The second generalization was conjectured by Arveson and Kadison in \cite{ArvKad}.  The second main theorem in this paper is the proof of their conjecture,

\newtheorem*{theorem:associativity2}{Theorem \ref{conj2}}
\begin{theorem:associativity2}[The Schur-Horn theorem in type $II_1$ factors II]
Let $\mathcal{A}$ be a masa in a type $II_1$ factor $\mathcal{M}$. If $A \in \mathcal{A}$ and $S \in \mathcal{M}$ are positive operators with $A \prec S$. Then, there is an element $T \in \mathcal{O}(S)$ such that \[E(T) = A\]
\end{theorem:associativity2}

One cannot escape having to take the norm closure of the unitary orbit of $S$, see the paper \emph{loc.cit. }for a discussion on the necessity. In infinite dimensions, unitary equivalence cannot be determined from spectral data alone. On another note, it is trivial to see that the above theorems about diagonals for positive operators immediately yield identical theorems for hermitian operators, by adding a suitable constant to make them positive. 

A special case of the above theorem, namely, that given any positive contraction $A$ in $\mathcal{A}$, there is a projection $P$ in $\mathcal{M}$ so that $E(P) = A$, had been conjectured earlier by Kadison in \cite{KPNAS1}, see also \cite{KPNAS2}, who referred to it as the ''carpenter'' problem in type $II_1$ factors.

\begin{remark}
Neither of the two theorems directly implies the other. It is however, easy to see that theorem(\ref{conj2}) implies theorem(\ref{conj1}) when $S$ has finite spectrum.
\end{remark}

There has been methodical progress towards the resolution of Arveson and Kadison's conjecture (\ref{conj2}): Argerami and Massey\cite{ArMaSH} showed that 
\[\overline{E(\mathcal{O}(S))}^{\operatorname{SOT}} = \{A \in \mathcal{A} \mid A \prec S\}\] This was improved by Bhat and Ravichandran\cite{BhaRav}, who showed that it is enough to take the norm closure. They also showed that the conjecture holds when both the operators $A$ and $S$ have finite spectrum. Dykema, Hadwin, Fang and Smith\cite{DFHS} gave a natural way to approach the problem and reduced the conjecture to a question involving kernels of conditional expectations. Using this approach, they were able to show that the conjecture holds, among other cases, for the radial and generator masas in the free group factors. However, it is unclear if their strategy can be be used to settle the conjecture in full.

It must be pointed out that approximate Schur-Horn theorems are easier to obtain than exact ones. Further, it is possible that one might lose much fine structure: For instance, Kadison characterised the diagonals of projections in $\mathcal{B}(\mathcal{H})$ and discovered an index obstruction to a sequence arising as the diagonal of a projection. This subtlety is lost when ones passes to the norm closure of the set of diagonals, see \cite{ArDiPN} for a discussion.

There has been a great deal of progress towards characterising the diagonals of hermitian operators in  $\mathcal{B}(\mathcal{H})$; Unlike the finite dimensional case, the situation in infinite dimensions is highly subtle. Bownik and Jasper have given a complete description of the possible diagonals of hermitian operators with finite spectrum in $\mathcal{B}(\mathcal{H})$ in \cite{BowJas1} and \cite{BowJas2}. In another direction, the diagonals of compact operators have been characterised by Loreaux and Weiss in \cite{LorWei}, extending the earlier work of Kaftal and Weiss in \cite{KafWei}.

I end with a recent development; Thompson's theorem in matrix algebras characterises operators which can realised to have a  prescribed diagonal in terms of their singular values.  Kennedy and Skoufranis in \cite{KenSko} have recently extended Thompson's theorem to the setting of type $II_1$ factors. 

This paper has six sections apart from the introduction. In section (2), I collect some standard facts in noncommutative measure theory. Section (3) exploits the useful observation that once we can solve the problem ''locally'', theorem (\ref{conj1}) follows using transfinite induction. Section (4) contains the proof of theorem (\ref{conj1}). Section (5) then builds upon this result to prove theorem (\ref{conj2}). We then use the Schur-Horn theorem for type $II_1$ factors to deduce theorems for type $II_{\infty}$ factors in Section (6). In the last section, namely Section (7) for sake of completeness, I explain the situation both in the case of general finite von Neumann algebras and type $III$ factors.

\begin{acknowledgement}
 I would like to thank Junsheng Fang for telling me about this problem and for several useful discussions.  
\end{acknowledgement}

\section{Notation and basic relationships}
There is a concrete description of majorization in type $II_1$ factors that is more convenient to work with, that we now describe. We will use the following nonstandard definition repeatedly: Given two subsets $X$ and $Y$ of $\mathbb{R}$, say that $X \geq Y$  if $X$ is to the right of $Y$, i.e. $\operatorname{inf}_{x \in X}  \geq  \operatorname{sup}_{x \in Y}$. We analogously define the relation $X>Y$. Also, given a self-adjoint operator $S$, we will use $\alpha(S)$ to denote $\operatorname{inf}\{x \in \sigma(S)\}$.

 Let $A$ be a positive operator in type $II_1$ factor. By the spectral theorem, there is a Borel measure with compact support, $\mu$ on $\mathbb{R}$ so that 
\[\tau(A^n) = \int_\mathbb{R} x^n d\mu \quad n = 0, 1, \cdots\]
Define the real valued function $f_A$ on $[0,1)$ by 
\[f_A(x) = \inf\{t \mid \tau(E_{A}((t,\infty))) \leq x\}.\]
 This function $f_A$ is non-increasing and right continuous. We have the identity  
\[\tau(A^{n}) =\int_0^1 f_A(x)^n dm \quad n = 0,1, \cdots \]
where $m$ denotes Lebesgue measure. The values of this function were denoted the generalised $s$ numbers of $A$ by Fack and Kosaki\cite{FacKos}. We however, choose to call the function $f_A$ the spectral scale of $A$. 

It is a standard fact\cite{Fack} that one can find a projection valued measure which we denote by $\mu_A$ on $[0,1]$ so that $\tau(\mu_A(X)) = m(X)$ for any Borel measurable set $X \subset [0,1]$ and so that 
\begin{eqnarray}\label{SM}
A = \int f_A(t) d\mu_A(t) 
\end{eqnarray}

This projection valued measure is not unique when there are atoms in the spectrum of $A$. However, given a positive operator $A$, we will fix a measure once and for all and use $\mu_A$ to denote this. Throughout this paper, it will be evident that the results will not depend on the particular choice of measure in this degenerate case.

Given two positive operators $A$ and $S$ inside a type $II_1$ factor $M$ with spectral  scales $f_A$ and $f_S$ repectively, it can be shown that $S$ majorizes $A$, written $A  \prec S$ if
\begin{eqnarray}\label{Maj21}
\int_0^r f_A(x)dm \leq \int_0^r f_S(x)dm, \,\, 0 \leq r \leq 1,   \quad \int_0^1 f_A(x)dm = \int_0^1 f_S(x)dm 
\end{eqnarray}
When we do not have the last trace equality, we say that $S$ submajorizes $A$ and denote this by $A \prec_{w} S$.

There are two concise ways of representing these inequalities in type $II_1$ factors. The first uses the Ky Fan norm functions are defined by
\begin{eqnarray}\label{defF}
 F_A(x) := \int_0^x f_A(t) dm(t) \quad \text{ for } \quad 0 \leq x \leq 1
\end{eqnarray}
 The function $F_A$ is continuous and we have that, $A \prec_{w} S$ iff $F_A(x) \leq F_S(x)$ for $x \in [0,1]$. If we also have that $F_A(1) = F_S(1)$, then, $A \prec S$.  Alternately, $A \prec S$ if 
\begin{eqnarray}
\tau(A \mu_A([0,t])) \leq \tau(S \mu_S([0,t])) \text{ for } 0 < t < 1 \quad \text{ and } \quad \tau(A) = \tau(S) 
\end{eqnarray}

Given two positive operators $A$ and $S$ in a type $II_1$ factor $\mathcal{M}$, define the quantity $L_{\mathcal{M}}(A,S)$, also denoted simply by $L(A,S)$ when the ambient algebra is clear, by 
 \begin{eqnarray}
L(A,S): = \min_{0 \leq t \leq 1} (F_S(t) - F_A(t)).
\end{eqnarray} 
We have that $A \prec_{w} S$ exactly when  $L(A,S) = 0$ and the function $L(A,S)$ measures how far $S$ is from submajorizing $A$. We record some facts about the quantity $L(A,S)$. 

\begin{lemma}\label{Llemma}
Let $A, S$ be positive operators in a type $II_1$ factor $\mathcal{M}$. Then, 
\begin{enumerate}
 \item  $L(A,S) \geq - \tau(A)$.
 \item  If $A$ and $S$ commute with a set of orthogonal projections $\{ P_1, \cdots, P_k\}$ which sum up to $I$, then,
\[L(A,S) \geq \sum_{1}^{k} \tau(P_m) L_{P_m \mathcal{M} P_m}(AP_m,SP_m)\]
 \item Suppose additionally that
\[\sigma_{P_1 \mathcal{M}P_1}(AP_1) \geq  \cdots \geq \sigma_{P_k \mathcal{M}P_k}(AP_k),\quad \sigma_{P_1 \mathcal{M}P_1}(SP_1)\geq \cdots \geq \sigma_{P_k \mathcal{M}P_k}(SP_k)\]
Then, letting  $Q_0 = 0$ and $Q_m = P_1 + \cdots + P_m$ for $m = 1, \cdots, k$,
\[L(A,S) = \operatorname{min}_{1 \leq m \leq k} \, \tau((S-A)Q_{m-1})+\tau(P_m)L_{P_m \mathcal{M} P_m}(AP_m,SP_m)\]
\end{enumerate}
\end{lemma}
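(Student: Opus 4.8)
The plan is to deduce all three items from a single structural fact: a ``splitting formula'' for the Ky Fan function $F_A$ relative to a decomposition of $I$ into projections commuting with $A$. Part (1) needs nothing: since $S\ge 0$ we have $f_S\ge 0$, hence $F_S(t)\ge 0$ for all $t\in[0,1]$, while $f_A\ge 0$ gives $F_A(t)\le F_A(1)=\tau(A)$; thus $F_S(t)-F_A(t)\ge-\tau(A)$ pointwise, and minimizing over $t$ yields $L(A,S)\ge-\tau(A)$.

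For (2) and (3) I would first record the variational description $F_A(x)=\sup\{\tau(AE):E\in\mathcal M\text{ a projection},\ \tau(E)=x\}$ for $A\ge 0$ — standard, implicit in the Fack--Kosaki calculus \cite{FacKos}, with the supremum attained at $\mu_A([0,x])$, and unchanged if $E$ ranges over all positive contractions of trace $x$. When $A$ commutes with orthogonal $P_1,\dots,P_k$ summing to $I$, the identity $\tau(AE)=\sum_m\tau\big((AP_m)(P_mEP_m)\big)$ (valid because $AP_m=P_mAP_m$), together with $0\le P_mEP_m\le P_m$ and $\sum_m\tau(P_mEP_m)=\tau(E)$, converts this into
\[
F_A(x)=\max\Big\{\sum_{m=1}^k G_m^A(x_m)\ :\ 0\le x_m\le\tau(P_m),\ \sum_m x_m=x\Big\},
\]
where $G_m^A(y)=\sup\{\tau((AP_m)B):0\le B\le P_m,\ \tau(B)=y\}=\tau(P_m)\,F^{(m)}_{AP_m}\big(y/\tau(P_m)\big)$ and $F^{(m)}$ denotes the Ky Fan function inside the factor $P_m\mathcal MP_m$ with its normalized trace; in particular $\min_{0\le y\le\tau(P_m)}\big(G_m^S(y)-G_m^A(y)\big)=\tau(P_m)\,L_{P_m\mathcal MP_m}(AP_m,SP_m)$. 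Part (2) is then a feasibility argument: pick $t^\ast$ with $L(A,S)=F_S(t^\ast)-F_A(t^\ast)$ and let $(x_m^\ast)$ be an optimal split for $F_A(t^\ast)$; since $(x_m^\ast)$ is admissible in the splitting formula for $F_S(t^\ast)$ too, $F_S(t^\ast)\ge\sum_m G_m^S(x_m^\ast)$, whence
\[
L(A,S)\ge\sum_{m=1}^k\big(G_m^S(x_m^\ast)-G_m^A(x_m^\ast)\big)\ge\sum_{m=1}^k\tau(P_m)\,L_{P_m\mathcal MP_m}(AP_m,SP_m).
\]

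For (3) the ordering hypothesis $\sigma(AP_1)\ge\cdots\ge\sigma(AP_k)$ makes the maximization greedy: each $G_m^A$ is concave with right derivative lying in $[\alpha(AP_m),\sup\sigma(AP_m)]$, so the derivatives of $G_1^A,\dots,G_k^A$ decrease block by block, and an exchange argument (moving mass to a lower index never decreases $\sum_m G_m^A(x_m)$) shows that for $t$ in the interval $[\tau(Q_{m-1}),\tau(Q_m)]$ the optimal split fills $P_1,\dots,P_{m-1}$, puts $t-\tau(Q_{m-1})$ into $P_m$, and nothing into the rest, so $F_A(t)=\tau(AQ_{m-1})+G_m^A\big(t-\tau(Q_{m-1})\big)$; the identical hypothesis on $S$ gives $F_S(t)=\tau(SQ_{m-1})+G_m^S\big(t-\tau(Q_{m-1})\big)$ on the same interval. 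Subtracting and minimizing over $s=t-\tau(Q_{m-1})\in[0,\tau(P_m)]$ gives $\min_{t\in[\tau(Q_{m-1}),\tau(Q_m)]}(F_S-F_A)=\tau((S-A)Q_{m-1})+\tau(P_m)\,L_{P_m\mathcal MP_m}(AP_m,SP_m)$; since $[0,1]$ is the union of these $k$ intervals and $F_S-F_A$ is continuous, the global minimum is the minimum over $m$ of these, which is the asserted identity (with $Q_0=0$ giving the $m=1$ term $\tau(P_1)L_{P_1\mathcal MP_1}(AP_1,SP_1)$).

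The routine ingredients are the variational formula for $F_A$ and the ``diagonalize modulo the trace'' computation of $\tau(AE)$. The one step demanding genuine care is the greedy-optimality claim behind (3): the exchange/convexity estimate must be arranged so as to stay valid when consecutive spectra $\sigma(AP_m),\sigma(AP_{m+1})$ (or the $\sigma(SP_m)$) abut, and when the spectra carry atoms so that $\mu_A,\mu_S$ are non-unique — but, as everywhere in the paper, every displayed quantity is visibly independent of those choices, so no real difficulty materializes.
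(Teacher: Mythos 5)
Your argument is correct, and apart from item (1) (where you give the same one-line bound as the paper) it takes a genuinely different route. For item (2), the paper reduces to $k=2$ by induction and then works directly with the block decompositions $\mu_S([0,t])=\mu^{P_1\mathcal{M}P_1}_{SP_1}([0,a])\oplus\mu^{P_2\mathcal{M}P_2}_{SP_2}([0,b])$ and $\mu_A([0,t])=\mu^{P_1\mathcal{M}P_1}_{AP_1}([0,c])\oplus\mu^{P_2\mathcal{M}P_2}_{AP_2}([0,d])$, comparing the two cut points via the monotonicity of the spectral scale across the cut; you instead invoke the Ky Fan--type variational formula $F_A(x)=\sup\{\tau(AE):\tau(E)=x\}$ (in its positive-contraction form, needed because the compressions $P_mEP_m$ are not projections), derive the splitting formula for $F_A$ and $F_S$, and finish with a one-step feasibility argument valid for all $k$ at once. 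What your route buys is uniformity in $k$ and freedom from the $a,b,c,d$ bookkeeping; what it costs is the import of the Hardy--Littlewood/Fack--Kosaki inequality $\tau(AB)\leq\int_0^1 f_A f_B\,dm$ behind the variational formula, a standard fact but one the paper never uses, so you should cite it explicitly rather than call it implicit. For item (3), the paper's hypothesis on the ordering of the spectra gives immediately that $f_A$ (and $f_S$) is the concatenation of the rescaled block scales, whence $F_A(t)=\tau(AQ_{m-1})+\tau(P_m)F^{P_m\mathcal{M}P_m}_{AP_m}\bigl((t-\tau(Q_{m-1}))/\tau(P_m)\bigr)$ on the $m$-th block in one line; your greedy/exchange optimization reproves exactly this block formula (correctly --- the derivative bounds $\alpha(AP_m)\leq f^{P_m\mathcal{M}P_m}_{AP_m}\leq\|AP_m\|$ and the hypothesis $\sigma(AP_m)\geq\sigma(AP_{m+1})$ do make moving mass to a lower index harmless, atoms and abutting spectra included), but it is a longer path to the same identity, after which the two proofs coincide: minimize over each block and then over $m$.
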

\begin{proof} It is easy to see that $F_A(0) = 0$ and that $F_A(1) = \tau(A)$. For the first assertion, we have
\begin{eqnarray*}
 L(A,S): = \min_{0 \leq t \leq 1} (F_S(t) - F_A(t)) \geq \min_{0 \leq t \leq 1} F_S(t) - \max_{0 \leq t \leq 1} F_A(t) \geq -\tau(A)
\end{eqnarray*}
For the second, it is easy to see that once we have proved the assertion for $k=2$, the general case follows by induction. Assume then, that $k=2$. Let $0 < t < 1$ be arbitrary; We may write 
\[\mu_S([0,t]) = \mu^{P_1\mathcal{M}P_1}_{SP_1}([0,a]) \oplus \mu^{P_2\mathcal{M}P_2}_{SP_2}([0,b])\]and
\[  \mu_A([0,t]) = \mu^{P_1\mathcal{M}P_1}_{AP_1}([0,c]) \oplus \mu^{P_2\mathcal{M}P_2}_{AP_2}([0,d]),\] for some $a,b,c,d$. Here, the notation $\mu_{AP}^{P\mathcal{M}P}$ means that we calculate the relevant spectral projection for the operator $AP$ considered as an operator inside the $II_1$ factor $P\mathcal{M}P$. The expressions $f_{AP}^{P\mathcal{M}P}$ and  $F_{AP}^{P\mathcal{M}P}$ when referring to the spectral scale and the Ky Fan norm function will be used similarly. 

 Suppose that $a > c$ - The complementary case is handled similarly. We have that
 \[t = \tau(\mu_S([0,t])) =  \tau(P_1)\tau_{P_1\mathcal{M}P_1}(\mu^{P_1\mathcal{M}P_1}_{SP_1}([0,a])) + \tau(P_2)\tau_{P_2\mathcal{M}P_2}(\mu^{P_2\mathcal{M}P_2}_{SP_2}([0,b])) = \tau(P_1)a + \tau(P_2)b,\]
 and similarly, $t = \tau(P_1)c+\tau(P_2)d$. Together, with the assumption that $a>c$, this implies that $b < d$. We also have that $\tau(P_1)(a-c) = \tau(P_2)(d-b)$.  Next, it is easy to see that
 \[\operatorname{inf}(\{f^{P_1\mathcal{M}P_1}_{SP_1}(x) : x \in (c,a)\}) \geq \operatorname{sup}(\{f^{P_2\mathcal{M}P_2}_{SP_2}(x): x \in (b,d)\})\]

 We make a simple calculation,
\begin{eqnarray*}
&&\tau(P_1) \tau_{P_1\mathcal{M}P_1}(SP_1\mu_{SP_1}^{P_1\mathcal{M}P_1}([c,a])) - \tau(P_2) \tau_{P_2\mathcal{M}P_2}(SP_2\mu_{AP_2}^{P_2\mathcal{M}P_2}([b,d])) \\
&=& \tau(P_1)[F_{SP_1}^{P_1\mathcal{M}P_1}(a) - F_{SP_1}^{P_1\mathcal{M}P_1}(c)] +   \tau(P_2)[F_{SP_2}^{P_2\mathcal{M}P_2}(d) - F_{SP_2}^{P_2\mathcal{M}P_2}(b)]\\
&=& \tau(P_1)(a-c) \dfrac{\int_c^a f_{SP_1}^{P_1\mathcal{M}P_1}(x)dm(x)}{a-c} - \tau(P_2)(d-b)\frac{\int_b^d f_{SP_2}^{P_2\mathcal{M}P_2}(x)dm(x)}{d-b}\\
&\geq& 0
\end{eqnarray*}
Another simple calculation shows that
\begin {eqnarray*}
 F_S(t) - F_A(t) &=& \tau[S(\mu_{SP_1}^{P_1\mathcal{M}P_1}([0,a]) \oplus \mu_{SP_2}([0,b]))] - \tau[A(\mu_{AP_1}^{P_1\mathcal{M}P_1}([0,c])  \oplus \mu_{AP_2}([0,d]))] \\
&=& \tau(P_1) \tau_{P_1\mathcal{M}P_1}[SP_1\mu_{SP_1}^{P_1\mathcal{M}P_1}([0,a]) - AP_1\mu^{P_1\mathcal{M}P_1}_{AP_1}([0,c])]\\
& + &\tau(P_2) \tau_{P_2\mathcal{M}P_2}[SP_2\mu_{SP_2}^{P_2\mathcal{M}P_2}([0,b]) - AP_2\mu_{AP_2}^{P_2\mathcal{M}P_2}([0,d])]\\
&=& \tau(P_1) \tau_{P_1\mathcal{M}P_1}[SP_1\mu_{SP_1}^{P_1\mathcal{M}P_1}([0,c])  - AP_1\mu_{AP_1}^{P_1\mathcal{M}P_1}([0,c])]\\ 
&+& \tau(P_2) \tau_{P_2\mathcal{M}P_2}[SP_2\mu_{SP_2}^{P_2\mathcal{M}P_2}([0,d]) - AP_2\mu_{AP_2}^{P_2\mathcal{M}P_2}([0,d])]\\ 
&+& \tau(P_1) \tau_{P_1\mathcal{M}P_1}(SP_1\mu_{SP_1}^{P_1\mathcal{M}P_1}([c,a])) - \tau(P_2) \tau_{P_2\mathcal{M}P_2}(SP_2\mu_{AP_2}^{P_2\mathcal{M}P_2}([b,d])) \\
&\geq& \tau(P_1) L_{P_1 \mathcal{M} P_1}(AP_1,SP_1) + \tau(P_2) L_{P_2 \mathcal{M} P_2}(AP_2,SP_2)
\end {eqnarray*}
We conclude that 
\[L(A,S) \geq \sum_{1}^{2} \tau(P_m) L_{P_m \mathcal{M} P_m}(AP_m,SP_m)\]
For the last assertion, given the hypotheses, it is easy to see that 
\[f_A(t) = f_{AP_{m}}^{P_m \mathcal{M}P_m}\left(\dfrac{t-\tau(Q_{m-1})}{\tau(P_{m})}\right) \quad  \text{if } \tau(Q_{m-1}) \leq t  < \tau(Q_{m}), \quad 1 \leq m \leq k \]
and thus,
\[F_A(t) = \tau(AQ_{m-1}) + \tau(P_m) F_{AP_m}^{P_m\mathcal{M}P_m}\left(\dfrac{t-\tau(Q_{m-1})}{\tau(P_{m})}\right) \quad  \text{if } \tau(Q_{m-1}) \leq t  < \tau(Q_{m})  \]
And hence, 
\[\operatorname{inf}(\{F_S(t)-F_A(t) : t \in [\tau(Q_{m-1}),\tau(Q_m)]\}) = \tau((S-A)Q_{m-1}) + \tau(P_m)L_{P_m \mathcal{M} P_m}(AP_m,SP_m)\]
 The assertion follows. 

\end{proof}

We are also interested in Schur-Horn theorems in type $II_{\infty}$ factors. Approximate results in this setting were recently obtained by Argerami and Massey in\cite{ArgMas2I}. Let $\mathcal{M}$ be a $\sigma$ finite type $II_{\infty}$ factor and let $\tau$ be a faithful normal semifinite trace on $\mathcal{M}$. We will restrict our attention to masas $\mathcal{A}$ that admit a normal trace preserving conditional expectation.  We will refer such masas as atomic masas; Such masas are generated by their finite projections. Let $A$ be a positive trace class operator. Then, as in the case of positive operators in type $II_1$ factors, there exists a spectral scale $f_A$, this time on $[0,\infty)$ and a projection valued measure $\mu_A$, this time on $[0,\infty)$ so that 
\[\tau(A^{n}) =\int_0^{\infty} f_A(x)^n dm \quad \forall n, \quad A = \int_0^{\infty} f_A(t) d\mu_A(t) \]
Given, two trace class operators $A$ and $S$, we say that $S$ majorizes $A$, again written $A \prec S$ if inequalities analogous to (\ref{Maj21}) hold. For trace class operators, it is more natural to take the closure of the unitary orbit in the trace norm than in the operator norm; We thus define 
\begin{eqnarray}\label{TCO}
\mathcal{O}(S) = \overline{\{USU^{*} \, : U \in \mathcal{U}(\mathcal{M})\}}^{||\cdot||_1} 
\end{eqnarray}
when $S$ is trace class in a type $II_{\infty}$ factor. 

When the operators considered are not trace class, one needs to be more careful while considering majorization. As pointed out by Neumann\cite{NeuSH}, one needs to consider both the upper and lower spectral scales defined as 
\begin{eqnarray*}
U_A(x) &=& \inf\{t \mid \tau(E_{A}((t,\infty)) \leq x\}\\
L_A(x) &=& \sup\{t \mid \tau(E_{A}([0,t)) \leq x\} = -U_{-A}(x)
\end{eqnarray*}
When $A$ is trace class, $L_A$ becomes zero. For two positive operators $A$ and $S$, we say that $S$ majorizes $A$ if 
\begin{enumerate}\label{Maj2I}
\item We have the inequalities
 \begin{eqnarray}
\int_0^r U_A(x)dm \leq \int_0^r U_S(x)dm, \quad \int_0^r L_A(x)dm \geq \int_0^r L_S(x)dm, \,\, 0 \leq r < \infty
\end{eqnarray}
\item Additionally, if there is a $\lambda$ such that $S - \lambda I$ is trace class, then so is $A - \lambda I$ and $\tau(S - \lambda I) = \tau(A - \lambda I)$.  
\end{enumerate}

\section{A local Schur-Horn theorem}
Recall, see (\ref{EM}), that two positive operators $A$ and $S$ in a type $II_1$ factor $\mathcal{M}$ are said to be equimeasurable if $\tau(A^n) = \tau(S^n)$ for $n = 0, 1, \cdots$. This is equivalent to saying that the spectral measures and hence the spectral scales of $A$ and $S$ are identical. It is also routine to see that this is also equivalent to the existence of a sequence of unitary operators $\{U_n\}$ so that $||U_n S U_n^{*} - A|| \rightarrow 0$. 

An example of Popa\cite{PopInj} shows that equimeasurable operators need not be unitarily equivalent. Let $A$ lie inside a masa $\mathcal{A}$. The same example of Popa also shows that we cannot hope to even  ''locally'' conjugate $S$ into $A$, i.e, it is not possible to find a unitary $U$ and a projection $P$ in $\mathcal{A}$ so that $E(PUSU^{*}P) = AP$ and $A(I-P) \prec (I-P)USU^{*}(I-P)$. However, I show in proposition(\ref{prop1}) that this can be accomplished whenever $A \prec S$ but $A$ is not equimeasurable to $S$. 

	The proof of the Schur-Horn theorem has three steps. Given $A \in \mathcal{A}$ and $S \in \mathcal{M}$ with $A \prec S$, we 
\begin{enumerate}
\item First solve the problem ''locally''. That is, we find a projection $P$ in $\mathcal{A}$ and a unitary $U$ so that $E(PUSU^{*}P) = AP$ and so that $A(I-P) \prec (I-P)USU^{*}(I-P)$. This is done in proposition (\ref{prop1}).
\item Iterate the above procedure to make the projection $P$ as above as large as possible. We will end up with a projection $P$ in $\mathcal{A}$ and a unitary $U$ so that $E(PUSU^{*}P) = AP$ and $(I-P)USU^{*}(I-P) \approx A(I-P)$. This is done in proposition (\ref{SHM1}). This will yield us the first Schur-Horn theorem, the theorem that characterises the ''diagonals'' of a given hermitian operator.
\item Build upon this with some careful choices so that the projection $P$ actually equals $I$, yielding the Schur-Horn theorem. This is done in lemma (\ref{Eqmlem2}) and proposition (\ref{SH1Part}) in section $(5)$. 
\end{enumerate}

We start off with some elementary lemmas.

\begin{lemma}\label{2matlem}
Let $P$ be a projection of trace $\frac{1}{2}$ inside a masa $\mathcal{A}$ in a type $II_1$ factor $\mathcal{M}$. Let $A$ be a positive operator in $\mathcal{A}$ and let $S$ be a positive operator in $\mathcal{M}$ that commutes with $P$. With respect to the decomposition $I = P \oplus (I-P)$, we write(using an arbitrary partial isometry $V$ with $VV^{*}=P$ and $V^{*}V = I-P$ as the matrix unit $E_{12}$),
\[\mathcal{A} = \left( \begin{array}{cc}
\mathcal{A}_1 & 0\\
0 & \mathcal{A}_2 \end{array} \right) \quad A =   \left( \begin{array}{cc}
A_1 & 0\\
0 & A_2 \end{array} \right) \quad S =   \left( \begin{array}{cc}
S_1 & 0\\
0 & S_2 \end{array} \right) \]
where $\mathcal{A}_1$ and $\mathcal{A}_2$ are masas in $P\mathcal{M}P$, the operators $A_1$ and $A_2$ are in $\mathcal{A}_1$ and $S_1$ and $S_2$ are in $P\mathcal{M}P$.
 Assume that 
\begin{eqnarray}\label{dom}
\sigma(S_1) \geq \sigma(A_1) \geq \sigma(S_2) 
\end{eqnarray}

Then, there is a unitary $U$ so that 
\[E_{\mathcal{A}}(PUSU^{*}P) = AP \quad \text{i .e} \quad U \left( \begin{array}{cc}
S_1 & 0\\
0 & S_2 \end{array} \right) U^{*} =   \left( \begin{array}{cc}
X & \ast\\
\ast & Y \end{array} \right)\]
with $E_{\mathcal{A}_1}(X) = A_1$. We will automatically have that, \[\sigma_{(I-P)\mathcal{M}(I-P)}[(I-P)USU^{*}(I-P)] = \sigma_{P\mathcal{M}P}(Y) \subset \operatorname{conv}(\sigma(S))  = [\alpha(S_2),||S_1||]\]
\end{lemma}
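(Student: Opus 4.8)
The plan is to take $U$ to be an operator-valued ``rotation'' whose coefficients live in the small masa $\mathcal{A}_1$; this reduces the whole lemma to solving one scalar-type equation inside the \emph{commutative} algebra $\mathcal{A}_1$, where the domination hypothesis (\ref{dom}) does all the work.

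\emph{Step 1: the rotation and the reduction.} For a self-adjoint $C\in\mathcal{A}_1$ with $0\le C\le I$ (to be chosen), I would set, with respect to the matrix units $P, I-P, V$ of the statement,
\[U \;=\; \begin{pmatrix} C^{1/2} & -(I-C)^{1/2}\\ (I-C)^{1/2} & C^{1/2}\end{pmatrix}.\]
Since $C$ is self-adjoint and commutes with $I-C$, computing $U^{*}U$ and $UU^{*}$ shows at once that $U\in\mathcal{U}(\mathcal{M})$. Multiplying out $USU^{*}$ with $S=S_1\oplus S_2$ gives $USU^{*}=\begin{pmatrix} X & \ast\\ \ast & Y\end{pmatrix}$ with
\[X = C^{1/2}S_1C^{1/2}+(I-C)^{1/2}S_2(I-C)^{1/2},\qquad Y = (I-C)^{1/2}S_1(I-C)^{1/2}+C^{1/2}S_2C^{1/2}.\]
Because $E_{\mathcal{A}_1}$ is an $\mathcal{A}_1$-bimodule map and $C\in\mathcal{A}_1$, applying $E_{\mathcal{A}_1}$ to $X$ and using commutativity of $\mathcal{A}_1$ yields $E_{\mathcal{A}_1}(X)=C\,E_{\mathcal{A}_1}(S_1)+(I-C)\,E_{\mathcal{A}_1}(S_2)$. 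Since $PUSU^{*}P=X$, since $E_{\mathcal{A}}$ restricts to $E_{\mathcal{A}_1}$ on $P\mathcal{M}P$, and since $AP=A_1$, the desired identity $E_{\mathcal{A}}(PUSU^{*}P)=AP$ becomes equivalent to the single equation
\[C\,E_{\mathcal{A}_1}(S_1)+(I-C)\,E_{\mathcal{A}_1}(S_2)=A_1\qquad\text{in }\mathcal{A}_1.\]

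\emph{Step 2: solving for $C$.} Put $s_i=E_{\mathcal{A}_1}(S_i)$. Hypothesis (\ref{dom}) gives $\alpha(S_1)\ge\|A_1\|$ and $\alpha(A_1)\ge\|S_2\|$, so applying the positive unital map $E_{\mathcal{A}_1}$ to $\alpha(S_1)I\le S_1$ and $S_2\le\|S_2\|I$ produces the chain $s_2\le\|S_2\|I\le\alpha(A_1)I\le A_1\le\|A_1\|I\le\alpha(S_1)I\le s_1$. Hence $0\le A_1-s_2\le s_1-s_2$, all four operators lying in the commutative $\mathcal{A}_1$. Let $Q\in\mathcal{A}_1$ be the support projection of $D:=s_1-s_2$; on $I-Q$ one has $s_1=s_2$, and the chain above forces $A_1=s_2$ there, so $A_1-s_2$ is supported under $Q$. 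Therefore $C:=(A_1-s_2)(DQ)^{-1}$ (the inverse taken inside $Q\mathcal{A}_1$, and $C=0$ on $I-Q$) is a well-defined element of $\mathcal{A}_1$ solving the equation, and $0\le C\le I$ follows from $0\le A_1-s_2\le D$; equivalently one may identify $\mathcal{A}_1\cong L^{\infty}$ and solve pointwise. Then $C^{1/2},(I-C)^{1/2}\in\mathcal{A}_1$ by functional calculus, completing Step 1.

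\emph{Step 3: the spectral containment.} Hypothesis (\ref{dom}) also gives $\alpha(S_2)\le\alpha(S_1)$ and $\|S_2\|\le\|A_1\|\le\|S_1\|$, so $\alpha(S_2)I\le S_i\le\|S_1\|I$ for $i=1,2$. Sandwiching these bounds in the formula for $Y$ gives $\alpha(S_2)I\le Y\le\|S_1\|I$, hence $\sigma_{P\mathcal{M}P}(Y)\subset[\alpha(S_2),\|S_1\|]$; as $(I-P)USU^{*}(I-P)$ is carried onto $Y$ by the $*$-isomorphism $\operatorname{Ad}(V)$, the two have equal spectrum, and $\operatorname{conv}(\sigma(S))=[\alpha(S_2),\|S_1\|]$ by the ordering in (\ref{dom}). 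The only place needing a little care is Step 2 when $s_1-s_2$ fails to be invertible; otherwise the argument is a direct computation, so I do not expect a genuine obstacle here.
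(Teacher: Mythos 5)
Your proposal is correct and follows essentially the same route as the paper: the paper's proof takes the rotation unitary with diagonal entries $H=C^{1/2}$ in $\mathcal{A}_1$, where $H^2=(A_1-E_{\mathcal{A}_1}(S_2))(E_{\mathcal{A}_1}(S_1-S_2))^{-1}$, and verifies $E_{\mathcal{A}_1}$ of the corner and the spectral sandwich exactly as you do. If anything, your write-up is slightly more careful than the paper's, since you include the sign making $U$ genuinely unitary and you handle, via the support projection (equivalently the pointwise $L^{\infty}$ solution), the case where $E_{\mathcal{A}_1}(S_1-S_2)$ fails to be invertible, which the paper passes over silently.
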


\begin{proof}
 Let $H$ be the positive operator in $\mathcal{A}_1$ determined by the formula

\[H^2 = (A_1 - E_{\mathcal{A}_1}(S_2))( E_{\mathcal{A}_1}(S_1 - S_2))^{-1}\] 

 The operators $A_1$, $E_{\mathcal{A}_1}(S_1)$ and $ E_{\mathcal{A}_1}(S_2)$ form a commuting set and by the condition(\ref{dom}), it is easy to see that $H$ is a positive contraction. Now, let $U$ be the unitary given by
\[U =  \left( \begin{array}{cc}
H & \sqrt{I-H^2}\\
\sqrt{I - H^2} & H \end{array} \right)\]

Conjugating $S$ by $U$, we have that
\[U S U^{*} =  \left( \begin{array}{cc}
H S_1 H + \sqrt{I - H^2}  S_2  \sqrt{I - H^2} & \ast\\
\ast &  \sqrt{I-H^2} S_1 \sqrt{I-H^{2}}  +  H  S_2  H  \end{array} \right)\]
Another calculation shows that
\begin{eqnarray}
 \nonumber E_{\mathcal{A}_1}[H S_1 H + \sqrt{I - H^2}  S_2  \sqrt{I - H^2}] &=& H^2 E_{\mathcal{A}_1}(S_1) + (I-H^2)E_{\mathcal{A}_1}(S_2)\\
 \nonumber &=& H^2 (E_{\mathcal{A}_1}(S_1) - E_{\mathcal{A}_1}(S_2)) + E_{\mathcal{A}_1}(S_2)\\
 &=& A_1 
\end{eqnarray}

Recall that $\alpha(S_2)$ is the smallest point in the spectrum of $S_2$,
\begin{eqnarray}\alpha(S_2)I &=& (I-H^{2})\alpha(S_2) + H^{2} \alpha(S_2)\nonumber\\
& \leq& (I-H^{2})\alpha(S_1) + H^{2} \alpha(S_2) \label{tech1}\\
&\leq& \sqrt{I-H^2} S_1 \sqrt{I-H^{2}}  +  H  S_2  H \nonumber\\
& \leq& (I-H^{2})||S_1|| + H^{2} ||S_1|| \label{tech2}\\
&\leq& ||S_1|| I \nonumber 
\end{eqnarray}

We conclude that 
\begin{eqnarray}
\sigma_{P\mathcal{M}P}(Y) = \sigma_{P\mathcal{M}P}[\sqrt{I-H^2} S_1 \sqrt{I-H^{2}}  +  H  S_2  H]  \subset [\alpha(S_2),||S_1||] = \operatorname{conv}(\sigma(S))
\end{eqnarray}

\end{proof}

\begin{remark}\label{greater}
In the setup of lemma (\ref{2matlem}), suppose we have that $\sigma(S_1) > \sigma(A_1) \geq \sigma(S_2)$, we will have that \[\sigma_{P\mathcal{M}P}[(I-P)USU^{*}(I-P)] \subset (\alpha(S_2),||S_1||)\]
This is because, in lines (\ref{tech1}) and (\ref{tech2}), we will have strict inequality instead of mere inequality. 
\end{remark}

We now show that majorization is preserved under small perturbations. 

\begin{lemma}\label{lem1}
Let $A\prec S$ be positive operators in a type $II_1$ factor $\mathcal{M}$ and suppose
\[A =  \left( \begin{array}{ccc}
A_1 & 0 & 0\\
0 & A_2 & 0\\
0 & 0 & A_3\end{array} \right), \quad S =   \left( \begin{array}{ccc}
S_1 & 0 & 0\\
0 & S_2 & 0\\
0 & 0 & S_3\end{array} \right)\]
where the decomposition is with respect to $I = P \oplus Q \oplus R$ where $P, Q$ are orthogonal projections commuting with $A$ and $S$ and $R = I-P-Q$. Suppose that 
\[\sigma_{P\mathcal{M}P}(A_1) \geq \sigma_{Q\mathcal{M}Q}(A_2) \geq  \sigma_{R\mathcal{M}R}(A_3), \quad \sigma_{P\mathcal{M}P}(S_1)  \geq \sigma_{Q\mathcal{M}Q}(S_2) \geq \sigma_{R\mathcal{M}R}(S_3)\]
  Let $T$ be a positive operator in $Q\mathcal{M}Q$ with the same trace as $S_2$ and so that 
  \[\sigma_{P\mathcal{M}P}(S_1)  \geq \sigma_{Q\mathcal{M}Q}(T) \geq \sigma_{R\mathcal{M}R}(S_3)\]
  Suppose, $\tau_{P\mathcal{M}P}(S_1 - A_1) \geq -L_{Q\mathcal{M}Q}(A_2,T)\dfrac{\tau(Q)}{\tau(P)}$, (which holds in particular if $\tau(P(S - A))\geq \tau(Q)\tau_{Q\mathcal{M}Q}(T)$). Then,  we have the majorization relation,
\[ \left( \begin{array}{ccc}
A_1 & 0 & 0\\
0 & A_2 & 0\\
0 & 0 & A_3\end{array} \right) \prec   \tilde{T} :=  \left( \begin{array}{ccc}
S_1 & 0 & 0\\
0 & T & 0\\
0 & 0 & S_3\end{array} \right)\]
If in addition, we have that $F_S - F_A$ is strictly positive on $(0,1)$, then $F_{\tilde{T}}-F_A$ is strictly positive on $(0,1)$ as well. 
\end{lemma}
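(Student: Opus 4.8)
The plan is to establish the two defining properties of $A\prec R$ in turn. The trace equality is immediate: since $\tau(T)=\tau(S_2)$ and $A\prec S$ gives $\tau(S)=\tau(A)$,
\[
\tau(R)=\tau(S_1)+\tau(T)+\tau(S_3)=\tau(S_1)+\tau(S_2)+\tau(S_3)=\tau(S)=\tau(A).
\]
Moreover $F_R(0)=F_A(0)=0$ already forces $L(A,R)\le0$, so the only real task is to prove $L(A,R)\ge0$; then $F_R\ge F_A$ on $[0,1]$ together with $F_R(1)=F_A(1)$ yields $A\prec R$.

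To prove $L(A,R)\ge0$ I would apply part (3) of Lemma~\ref{Llemma} to the pair $(A,R)$ with the three orthogonal projections $P$, $Q$, $I-P-Q$. Its hypotheses hold: the required ordering of the blocks of $A$ is hypothesis (1), and the required ordering of the blocks of $R$ is exactly the assumption $\sigma_{P\mathcal{M}P}(S_1)\ge\sigma_{Q\mathcal{M}Q}(T)\ge\sigma_{R\mathcal{M}R}(S_3)$ imposed on $T$. With $Q_0=0$, $Q_1=P$, $Q_2=P+Q$ this gives
\[
L(A,R)=\min\Bigl\{\,\tau(P)\,L(A_1,S_1),\ \ \tau(S_1-A_1)+\tau(Q)\,L_{Q\mathcal{M}Q}(A_2,T),\ \ \tau(S_1-A_1)+\tau(T-A_2)+\tau(I-P-Q)\,L(A_3,S_3)\,\Bigr\}.
\]
Applying the same part of Lemma~\ref{Llemma} to $(A,S)$ — legitimate by hypothesis (1) — and using that $A\prec S$ forces $L(A,S)=0$, each of the three analogous quantities built from $(A,S)$ is $\ge0$. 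Now compare term by term. The first term is literally identical for $(A,R)$ and $(A,S)$, hence $\ge0$. Since $\tau(T-A_2)=\tau(S_2-A_2)$, the third term for $(A,R)$ coincides with the third term for $(A,S)$, hence is $\ge0$ as well.

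Everything therefore reduces to the middle term $\tau(S_1-A_1)+\tau(Q)\,L_{Q\mathcal{M}Q}(A_2,T)$, which is the heart of the matter and where hypothesis (2) enters. Here I would use only the crude estimate from part (1) of Lemma~\ref{Llemma}, applied inside $Q\mathcal{M}Q$: $L_{Q\mathcal{M}Q}(A_2,T)\ge-\tau_{Q\mathcal{M}Q}(A_2)$, so that $\tau(Q)\,L_{Q\mathcal{M}Q}(A_2,T)\ge-\tau(A_2)\ge-\tau(Q)$, the last inequality because $A_2\le I_{Q\mathcal{M}Q}$ (this is the step that uses that $A$ is a contraction, as may be assumed here). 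Combined with hypothesis (2), which says precisely $\tau(S_1-A_1)>\tau(Q)$, the middle term is $>0$. Hence all three terms in the displayed minimum are $\ge0$, so $L(A,R)\ge0$ and $A\prec R$. I expect this middle term to be the only genuine obstacle: $T$ is essentially arbitrary, so there is no better general lower bound for $L_{Q\mathcal{M}Q}(A_2,T)$ than the rough $-\tau_{Q\mathcal{M}Q}(A_2)$, and it is exactly the slack $\tau(Q)/\tau(P)$ built into hypothesis (2) that makes that rough bound suffice. The device that keeps the argument short is to route $L(A,R)$ through the Ky Fan decomposition of Lemma~\ref{Llemma}(3), so that its first and third terms are inherited verbatim from the decomposition of $L(A,S)=0$.

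For the final assertion, assume in addition that $F_S-F_A>0$ on $(0,1)$. On $[0,\tau(P)]$ and on $[\tau(P)+\tau(Q),1]$ the operator $R$ has the same first and last blocks as $S$ and $\tau(T)=\tau(S_2)$, so the formula for the Ky Fan function used in the proof of Lemma~\ref{Llemma}(3) shows $F_R=F_S$ on those intervals; hence $F_R-F_A=F_S-F_A>0$ on their intersection with $(0,1)$. On $(\tau(P),\tau(P)+\tau(Q))$ the same formula gives
\[
F_R(t)-F_A(t)=\tau(S_1-A_1)+\tau(Q)\bigl(F_T^{Q\mathcal{M}Q}(s)-F_{A_2}^{Q\mathcal{M}Q}(s)\bigr),\qquad s=\frac{t-\tau(P)}{\tau(Q)}\in(0,1),
\]
and since $F_T^{Q\mathcal{M}Q}\ge0$ and $F_{A_2}^{Q\mathcal{M}Q}(s)\le\tau_{Q\mathcal{M}Q}(A_2)$, this is $\ge\tau(S_1-A_1)-\tau(A_2)>0$ by the estimate above. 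Thus $F_R-F_A>0$ on all of $(0,1)$, completing the plan.
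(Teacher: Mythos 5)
Your proposal is correct and is essentially the paper's own argument: both rest on parts (1) and (3) of Lemma (\ref{Llemma}), with the contribution of the outer spectral intervals unchanged because $R$ and $S$ share their first and third blocks and $\tau(T)=\tau(S_2)$, and the middle interval controlled by hypothesis (2) together with the crude bound $L_{Q\mathcal{M}Q}(A_2,T)\geq -\tau_{Q\mathcal{M}Q}(A_2)$. The normalization $\|A\|\leq 1$ that you flag (needed to pass from $-\tau(A_2)$ to $-\tau(Q)$) is likewise used implicitly, without being stated, in the paper's proof.
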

\begin{proof}
It is easy to see that we have
\[F_R(x) -F_A(x)=  F_S(x) - F_A(x), \quad  x \in [0,\tau(P)] \cup [\tau(P+Q),1]\]
which is non-negative by hypothesis on $[0,1]$ and by assertions $(1)$ and $(3)$ of (\ref{Llemma}) that for $x \in [\tau(P),\tau(P+Q)]$
\begin{eqnarray*}
F_R(x)-F_A(x) &\geq& F_R(\tau(P))-F_A(\tau(P))  + \tau(Q)L_{Q\mathcal{M}Q}(A_2,T) \\
&=& \tau(P)\tau_{P\mathcal{M}P}(S_1 - A_1)+\tau(Q)L_{Q\mathcal{M}Q}(A_2,T)\\
&\geq& 0
\end{eqnarray*}
The assertion follows. The case when $F_S - F_A$ is strictly positive on $(0,1)$ is treated similarly. 
 \end{proof}

The following proposition is the critical step in the proof of the first Schur-Horn theorem, namely theorem (\ref{conj1}). It shows that the problem can be ''locally'' solved. Precisely,
 \begin{proposition}\label{prop1}
 Let $\mathcal{A}$ be a masa in a type $II_1$ factor $\mathcal{M}$. Let  $A \in \mathcal{A}$ and $S \in \mathcal{M}$ be positive operators with $A \prec S$. Assume that $A \notin \mathcal{O}(S)$. Then, there are projections $P$ in $\mathcal{A}$ and $Q$ in $\mathcal{M}$ with $P \leq Q$ and $\tau(Q) \leq 4\tau(P)$ and a unitary $U$ in $\mathcal{M}$ satisfying $U-I=Q(U-I) $  so that  
\[E(PUSU^{*}P) = AP, \qquad A(I-P) \prec (I-P)USU^{*}(I-P) \]
 \end{proposition}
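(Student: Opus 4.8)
\section*{Proof proposal for Proposition \ref{prop1}}

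The idea is to realise the piece $AP$ by a \emph{single} ``two-by-two rotation'' of a compression of $S$ living on a slightly larger projection $Q$, exactly in the spirit of Lemma \ref{2matlem}, and then to check that removing this piece does not destroy majorisation by appealing to the spectral-scale estimates of Lemma \ref{Llemma} and the replacement Lemma \ref{lem1}. First I would extract from the hypothesis $A\notin\mathcal{O}(S)$ a convenient site at which to work. Since $A\prec S$ we have $F_A\le F_S$ on $[0,1]$ with $F_A(1)=F_S(1)$, and since $A\not\approx S$ the function $F_S-F_A$ is not identically zero; a short argument (using that wherever $F_S-F_A$ vanishes at an interior point the two spectral scales must agree on the adjacent subinterval) produces a continuity point $\hat r\in(0,1)$ of $f_A$ with $F_S(\hat r)>F_A(\hat r)$ and $\alpha(S)<v<\|S\|$, where $v:=f_A(\hat r)$. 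From $F_S(\hat r)>F_A(\hat r)\ge v\hat r$ one gets $\tau(E_S((v,\infty)))\ge 2\eta$ for some fixed $\eta>0$, and from $v>\alpha(S)$ one gets $\tau(E_S([0,v')))\ge 2\eta'$ for some fixed $v'<v$ and $\eta'>0$. Now fix $\delta>0$ small ($\delta<\eta,\eta'$ in particular), use the version of $\mu_A$ whose range lies in $\mathcal{A}$ (available because $\mathcal{A}$ restricted to any spectral projection of $A$ is diffuse), and set $P:=\mu_A\big((\hat r-\tfrac\delta2,\hat r+\tfrac\delta2)\big)\in\mathcal{A}$, $\tau(P)=\delta$, so that $\sigma_{P\mathcal{M}P}(AP)\subseteq[v_-,v_+]$ with $v_\pm\to v$ as $\delta\to0$; after shrinking $\delta$ we may assume $\tau(E_S((v_+,\infty)))\ge 2\delta$ and $\tau(E_S([0,v_-)))\ge\delta$.

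Next I would build $Q$ and the unitary. Choose spectral projections $E^{+}\le E_S((v_+,\infty))$ and $E^{-}\le E_S([0,v_-))$ with $\tau(E^{+})=2\delta$, $\tau(E^{-})=\delta$ (with a little slack in the thresholds to avoid atom issues), and put $Q:=P\vee E^{+}\vee E^{-}$; then $P\le Q$ and $\tau(Q)\le\tau(P)+\tau(E^{+})+\tau(E^{-})\le 4\delta=4\tau(P)$. Since $E^{\pm}\le Q$ commute with $S$, the corners $E^{+}(QSQ)E^{+}=SE^{+}$ and $E^{-}(QSQ)E^{-}=SE^{-}$ have spectra above $v_+$ and below $v_-$ respectively, so a min--max (interlacing) argument shows that $QSQ$, viewed inside $Q\mathcal{M}Q$, has at least $2\delta$ worth of spectrum $\ge v_+$ and at least $\delta$ worth $\le v_-$. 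Hence there is a unitary $W_0\in\mathcal{U}(Q\mathcal{M}Q)$ conjugating $QSQ$ to $T_{\mathrm{top}}\oplus T_{\mathrm{mid}}\oplus T_{\mathrm{bot}}$ with respect to $P\oplus P''\oplus P'$ (traces $\delta$, $\tau(Q)-2\delta$, $\delta$), these being the top, middle and bottom pieces of the spectral scale of $QSQ$, so that $\sigma(T_{\mathrm{top}})\ge\sigma(T_{\mathrm{mid}})\ge\sigma(T_{\mathrm{bot}})$, $\alpha(T_{\mathrm{top}})\ge v_+\ge\sup\sigma(AP)$ and $\|T_{\mathrm{bot}}\|\le v_-\le\inf\sigma(AP)$. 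Now apply Lemma \ref{2matlem} inside $(P+P')\mathcal{M}(P+P')$ to the operator $T_{\mathrm{top}}\oplus T_{\mathrm{bot}}$, the masa $\mathcal{A}P\oplus\mathcal{B}$ (any masa $\mathcal{B}$ of $P'\mathcal{M}P'$) and the target $AP$: condition (\ref{dom}) holds by the spectral orderings just listed, so we obtain a unitary $W_1\in\mathcal{U}\big((P+P')\mathcal{M}(P+P')\big)$, extended by the identity on $P''$, whose $P$-block $X$ of $W_1(T_{\mathrm{top}}\oplus T_{\mathrm{bot}})W_1^{*}$ satisfies $E(X)=AP$ in $P\mathcal{M}P$, and whose $P'$-block $Y$ satisfies $\sigma(Y)\subseteq[\alpha(T_{\mathrm{bot}}),\|T_{\mathrm{top}}\|]\subseteq\operatorname{conv}(\sigma(S))$. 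Put $W:=W_1W_0\in\mathcal{U}(Q\mathcal{M}Q)$ and $U:=(I-Q)\oplus W$; then $U-I=Q(U-I)$.

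For the verification, note first that since $P\le Q$ and $P\in\mathcal{A}$, the $Q$-block of $USU^{*}$ is $W(QSQ)W^{*}$, so $PUSU^{*}P=X$ and $E(PUSU^{*}P)=AP$, which is the first assertion. For the second, write $I-P=(Q-P)\oplus(I-Q)$; pinching $(I-P)USU^{*}(I-P)$ along this decomposition yields $D:=\big(Y\oplus T_{\mathrm{mid}}\big)\oplus(I-Q)S(I-Q)$, and since a trace-preserving conditional expectation decreases in the majorisation order, $D\prec(I-P)USU^{*}(I-P)$; so by transitivity it suffices to prove $A(I-P)\prec D$. A count of traces gives $\tau(A(I-P))=\tau(A)-\tau(AP)=\tau(S)-\tau(AP)=\tau(D)$, so only the inequality $F_{A(I-P)}\le F_D$ is at issue, and this depends only on the spectral scales. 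Here I would use that $A(I-P)$ is, at the level of spectral scales, $A$ with a $\delta$-chunk of values in $[v_-,v_+]$ deleted, while $D$ arises from the pinching $\widehat S:=QSQ\oplus(I-Q)S(I-Q)\prec S$ by deleting the trace-$\delta$ block $T_{\mathrm{top}}$ (whose values are $\ge v_+$) and mixing $T_{\mathrm{bot}}$ into it to produce $Y$; combining this with $A\prec S$, with $X\oplus Y\prec T_{\mathrm{top}}\oplus T_{\mathrm{bot}}$, and with the gap inequalities $\alpha(T_{\mathrm{top}})\ge v_+$, $\|T_{\mathrm{bot}}\|\le v_-$, one estimates $L(A(I-P),D)\ge0$ through Lemma \ref{Llemma} (after replacing $A(I-P)$ and $D$ by equimeasurable operators inside a common masa of $(I-P)\mathcal{M}(I-P)$), or equivalently one exhibits a ``default'' block operator which $A(I-P)$ majorises and to which Lemma \ref{lem1} applies with $S_2$ replaced by $Y$.

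This last step --- showing that removing $AP$ still leaves enough spectral mass of $S$ high up --- is the main obstacle, and it is exactly where the slack $F_S(\hat r)>F_A(\hat r)$ and the extra room in $Q$ (the choice $\tau(E^{+})=2\delta$, hence $\tau(Q)\le 4\tau(P)$) get consumed, mirroring condition (2) of Lemma \ref{lem1}. Finally, if $F_S-F_A>0$ on all of $(0,1)$, then the strict version of Lemma \ref{2matlem} (Remark \ref{greater}) together with the last sentence of Lemma \ref{lem1} keeps this slack strict, which is the form the transfinite induction of the next section will require.
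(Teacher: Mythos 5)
The first half of your construction is fine: your $P$, $Q=P\vee E^{+}\vee E^{-}$ and $U=(I-Q)\oplus W$ do satisfy $\tau(Q)\le 4\tau(P)$, $U-I=Q(U-I)$ and $E(PUSU^{*}P)=AP$, and the reduction of the second assertion to $A(I-P)\prec D$ via pinching and the trace count is correct. The gap is the last step, which you only sketch (``one estimates $L(A(I-P),D)\ge 0$''), and it is not merely unproven: with the freedom your prescription allows it is false. The reason is that $E^{+}$ and $E^{-}$ are selected purely by the value thresholds $v_{\pm}$, so the mass of $S$ that gets mixed into $X\oplus Y$ (and partly removed) may sit at quantiles far from $\hat r$, where the slack $F_S-F_A$ can be arbitrarily small compared with $\delta$; nothing in your constraints ties $\delta$ to the slack away from $\hat r$, and since the slack vanishes at $0$ and $1$ you cannot repair this by shrinking $\delta$ alone. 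Concretely, let $f_S=3$ on $[0,0.1)$, $=1$ on $[0.1,0.6)$, $=0$ on $[0.6,1)$, and $f_A=3-\gamma$ on $[0,0.1)$, $=1-\beta$ on $[0.1,0.6)$, $=\tfrac14\gamma+\tfrac54\beta$ on $[0.6,1)$, with $\beta$ small and $0<\gamma\le\delta$. Then $A\prec S$, $A\notin\mathcal{O}(S)$, and any $\hat r\in(0.1,0.6)$ meets your requirements with $v=1-\beta$. One may arrange the data so that the trace-$\delta$ projection $P\le E_A(\{1-\beta\})$ also lies under $E_S(\{1\})$, and your prescription permits $E^{+}\le E_S(\{3\})$ of trace $2\delta$ and $E^{-}\le E_S(\{0\})$ of trace $\delta$, orthogonal to $P$. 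Then $Q$ commutes with $S$, $T_{\mathrm{top}}=3$, $T_{\mathrm{bot}}=0$, Lemma \ref{2matlem} gives $Y=(2+\beta)$ on a trace-$\delta$ projection, and $(I-P)USU^{*}(I-P)$ has distribution $3$ on mass $0.1-\delta$, $2+\beta$ on mass $\delta$, $1$ on mass $0.5$, $0$ on mass $0.4-\delta$. Its Ky Fan sum at $t=0.1$ is $0.3-(1-\beta)\delta$, whereas that of $A(I-P)$ is $0.3-0.1\gamma>0.3-(1-\beta)\delta$, so $A(I-P)\not\prec(I-P)USU^{*}(I-P)$.

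This is exactly the point the paper's proof is engineered to avoid: there the site is a crossing point $a$ of the two spectral scales, and both the high and the low chunk are taken from the quantile band $[a-\epsilon,a+\epsilon]$ adjacent to $a$ (subsets of $\{f_A<f_S\}$ and $\{f_A>f_S\}$ with $L_1>L_2>L_3>L_4$), so all damage to the partial sums is confined to that band; the remainder majorization is then checked by Lemma \ref{lem1}, whose hypothesis $\tau(T_1-B_1)>\tau(Q)/\tau(P)$ is precisely the statement that the slack accumulated before the band (bounded below by $\alpha$ on $[b,c]$, with $\epsilon<b\alpha/2$) dominates the band's trace. Your $\hat r$ need not be a crossing point, and your chunks are localized in value but not in quantile, so no analogue of that inequality is available; to salvage the approach you would have to take $E^{\pm}$ from the spectral region of $S$ immediately above $v_{+}$ and below $v_{-}$ \emph{and} control where (in quantile) the affected mass sits relative to the available slack, at which point you are essentially reconstructing the paper's argument.
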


\begin{proof}
Let us assume that $A$ and $S$ are contractions; It is easy to see that proving the proposition for contractions will yield the general result, by scaling. Let $f_A, f_S$ be the spectral scales of $A, S$ respectively and let $F_A, F_S$ be the Ky Fan norm functions associated to $A$ and $S$ respectively, see (\ref{defF}).

Since $A \prec S$, we have that
\begin{eqnarray}\label{MajDef}
 \int_0^r f_A(x)dm \leq \int_0^r f_S(x)dm, \,\,\, 0 \leq r \leq 1 \quad \text{and} \quad \int_0^1 
f_A(x)dm = \int_0^1 f_S(x)dm
\end{eqnarray}

Assume for now that $f_A \neq f_S$ almost everywhere and that $F_A(x) < F_S(x)$ on $(0,1)$. Once we have proved the proposition under this assumption, the general case will follow using routine arguments - See the last paragraph of the proof. 

Let $I = \{x \in [0,1]\, \mid f_A(x) < f_S(x)\}$ and let $J = \{x \in [0,1]\, \mid f_A(x) > f_S(x)\}$. Pick $0 < a  < 1$ so that $I \cap [a-\epsilon,a]$ and $J \cap [a,a+\epsilon]$ have positive Lebesgue measure for every $\epsilon > 0$. This can be done as follows: Recall that the functions $F_S$ and $F_A$ are continuous. Let $a$ be a number such that $F_S(a)-F_A(a) = \operatorname{max}(\{F_S(x) - F_A(x) \mid x \in [0,1]\}$. The desired property is now easy to verify. Suppose $I \cap [a-\epsilon,a]$ has zero measure, then
 \[[F_S(a) - F_A(a)] - [F_S(a-\epsilon) - F_A(a-\epsilon)] = \int_{a-\epsilon}^{a} (f_S(x)-f_A(x))dm < 0,\] contradicting the choice of $a$. The verification of the other desired property is similar. 

 Next, choose numbers $b,c$ with $0 < b < a < c < 1$ and define the number $\alpha$ by
\[\alpha:= \operatorname{inf}_{b \leq x \leq c} F_S(x) - F_A(x) \]
Since $F_A$ and $F_S$ are continuous and $F_S - F_A$ is strictly positive on $(0,1)$, we have that $\alpha > 0$.

Pick $\epsilon <  \dfrac{\alpha}{2}$ and pick subsets $X$ and $Y$ of positive measure in $I \cap [a-\epsilon,a]$ and $J \cap [a,a + \epsilon]$. Let $L_1, L_2, L_3, L_4$ be the sets 
\[L_1 = \{f_S(x) \mid x \in X\}, \,  L_2 = \{f_A(x) \mid x \in X\}, \,  L_3 = \{f_A(x) \mid x \in Y\}, \,  L_4 = \{f_S(x) \mid x \in Y\}.\] 

We may further arrange, by passing to subsets, if needed, that the following are satisfied:
\begin{enumerate}\label{ordercon}
 \item $m(X) = m(Y)$.
 \item $L_1 > L_2 > L_3 > L_4$
 \end{enumerate}
 
 For the second assertion, we use that $f_A, f_S$ are right continuous and non-increasing. See the figure below for a schematic description: 

\begin{figure}[H]

\centering

\includegraphics[width=80mm]{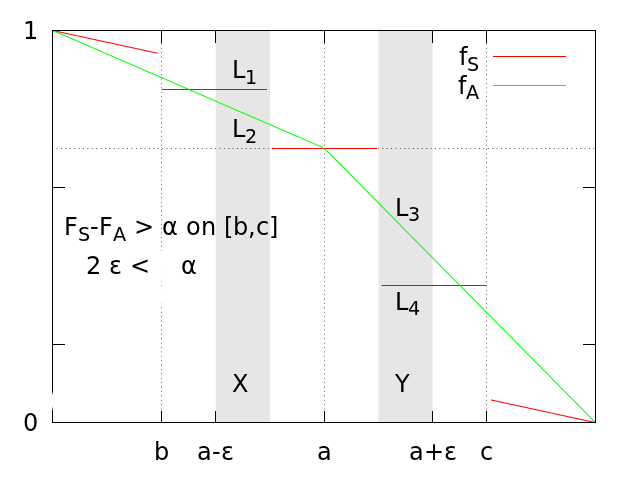}
\caption{Illustrating proposition (\ref{prop1})}
\end{figure}
 
Let $Q_1$ be the projection $\mu_A(X) \oplus \mu_A(Y)$ and $Q_2$ the projection $\mu_S(X) \oplus \mu_S(Y)$ and let $Q = Q_1 \vee Q_2$. Choose a unitary $V$ such that
\[V\mu_S(X)V^{*} = \mu_A(X), \qquad V\mu_S(Y)V^{*} = \mu_A(Y)\]
It is easy to see that the unitary $V$ may be chosen so that $V = QVQ \oplus (I-Q)$.
With respect to the decomposition $Q_1 = \mu_A(X) \oplus \mu_A(Y)$, we may write 
\[Q_1AQ_1 = \left( \begin{array}{cc}
A_1 & 0\\
0 & A_2 \end{array} \right) , \qquad Q_1VSV^{*}Q_1 = \left( \begin{array}{cc}
S_1 & 0\\
0 & S_2 \end{array} \right) \]
We have that(suppressing the explicit identification of the ambient algebra), by part $(ii)$ of (\ref{ordercon}) that, 
 \[\sigma(S_1) > \sigma(A_1) > \sigma(A_2) > \sigma(S_2)\]
 By lemma(1), there is a unitary $W$ satisfying  $W = Q_1WQ_1 \oplus (I-Q_1)$ inside $\mathcal{M}$ so that 
\begin{eqnarray}\label{local}
Q_1W VSV^{*} W^{*} Q_1 = \left( \begin{array}{cc}
Z_1 & \ast\\
\ast & Z_2 \end{array} \right) 
\end{eqnarray}
where $E_{\mathcal{A}\mu_A(X)}(Z_1) = A_1$ and further, $\sigma(Z_2) \in \operatorname{conv}[\sigma(S_1)\cup \sigma(S_2)]$. Let $U$ be the unitary $WV$ and $T$ the operator $T = USU^{*}$. We note that 
\[U = QUQ \oplus (I-Q)\qquad \text{or} \quad U-I=Q(U-I)\]
Letting $P = \mu_A(X)$, the equation (\ref{local}) implies that 
\begin{eqnarray}\label{part1}
E_{\mathcal{A}}(PTP) = AP, \end{eqnarray}
We have the trace inequality 
\begin{eqnarray}\label{traceP}
\tau(Q) = \tau(Q_1 \vee Q_2) \leq 2\tau(Q) = 4 \tau(P)
\end{eqnarray}

We now show that we also have the other required majorization condition, namely
\[A(I-P) \prec (I-P)T(I-P)\]
Let us decompose $A$ and $T$ using the projection decompositions

 \[I = \mu_{A}([0,a-\epsilon]) \oplus  \mu_A(X) \oplus [\mu_{A}([a-\epsilon,a+\epsilon]) - \mu_A(X) - \mu_A(Y)]  \oplus \mu_A(Y) \oplus \mu_{A}([a+\epsilon,1])\] 
 and
  \[I = \mu_{T}([0,a-\epsilon]) \oplus  \mu_A(X) \oplus [\mu_{T}([a-\epsilon,a+\epsilon]) - \mu_A(X) - \mu_A(Y)]  \oplus \mu_A(Y) \oplus \mu_{T}([a+\epsilon,1])\]
  respectively. (Here we use that $\mu_A(X)$ and $\mu_A(Y)$ are sub projections of $\mu_T([a-\epsilon,a+\epsilon])$ because of the special form of the averaging unitary $U$),   

\[ A =  \left( \begin{array}{ccccc}
B_1 & 0 & 0 & 0 & 0\\
0 & A_1 & 0 & 0 & 0\\
0 & 0 & A_3 & 0 & 0\\
0 & 0 & 0 & A_2 & 0\\
0 & 0 & 0 & 0 & B_2\end{array} \right), \quad  T=  \left( \begin{array}{ccccc}
T_1 & 0 & 0 & 0 & 0\\
0 & Z_1 & 0 & \ast & 0\\
0 & 0 & Z_3 & 0 & 0\\
0 & \ast & 0 & Z_2 & 0\\
0 & 0 & 0 & 0 & T_2\end{array} \right)\]   

 We have that $E_{\mathcal{A}P}(Z_1) = A_1$ and (we omit writing down the ambient algebra explicitly),
\begin{eqnarray}\label{hyp1}
\sigma(B_1) \geq \sigma(A_1) \cup \sigma(A_2) \cup \sigma(A_3) \geq \sigma(B_2), \,\, \sigma(T_1) \geq \sigma(Z_1) \cup \sigma(Z_2) \cup \sigma(Z_3) \geq \sigma(T_2)
\end{eqnarray}
and further, 
\begin{eqnarray}\label{hyp2}
 \tau(T_1 - B_1) = F_S(a-\epsilon)-F_A(a-\epsilon) \geq \alpha > 2 \epsilon  >  \tau[\mu_A([a-\epsilon,a+\epsilon])]
\end{eqnarray}

Let $\tilde{E}$ be the conditional expectation given by compression to the block diagonal followed by applying $E$ to the second diagonal entry. We have that 
\[ \tilde{E}(T) = R =  \left( \begin{array}{ccccc}
T_1 & 0 & 0 & 0 & 0\\
0 & A_1 & 0 & 0 & 0\\
0 & 0 & Y & 0 & 0\\
0 & 0 & 0 & S_3 & 0\\
0 & 0 & 0 & 0 & T_2\end{array} \right) \, \prec \, T \]

By the calculations (\ref{hyp1}) and (\ref{hyp2}), the operators $A$ and $R$ satisfy the hypothesis of lemma(\ref{lem1}) (note here that $S$ and hence, $T$, is a contraction) and thus $A \prec R$, namely

\[   \left( \begin{array}{ccccc}
B_1 & 0 & 0 & 0 & 0\\
0 & A_1 & 0 & 0 & 0\\
0 & 0 & A_2 & 0 & 0\\
0 & 0 & 0 & A_3 & 0\\
0 & 0 & 0 & 0 & B_2\end{array} \right) \prec  \left( \begin{array}{ccccc}
T_1 & 0 & 0 & 0 & 0\\
0 & A_1 & 0 & 0 & 0\\
0 & 0 & Y & 0 & 0\\
0 & 0 & 0 & S_3 & 0\\
0 & 0 & 0 & 0 & T\end{array} \right)\]   

This implies that
\[   \left( \begin{array}{cccc}
B_1 & 0 & 0 & 0\\
0 & A_2 & 0 & 0\\
0 & 0 & A_3 & 0\\
0 & 0 & 0 & B_2\end{array} \right) \prec   \left( \begin{array}{cccc}
T_1 & 0 & 0 & 0 \\
0 & B & 0 & 0\\
0 & 0 & S_3 & 0\\
0 & 0 & 0 & T_2\end{array} \right)\]
or, in other words,
\[(I-P)A \prec (I-P) T(I-P)\] 

This completes the proof when $f_A \neq f_S$ a.e. and $F_A < F_S$ on $(0,1)$. Now, we look at the general case, dropping the assumption that $f_A \neq f_S$ almost everywhere and that $F_A(x) < F_S(x)$ on $(0,1)$. Define $X: \{x \in [0,1] \mid f_A(x) = f_S(x)\}$, which may now have positive measure. Pick a unitary $U$ that conjugates $\mu_S(X)$ onto $\mu_A(X)$. We may write, under the decomposition $I = \mu_A(X) \oplus \mu_A(X^c) = P \oplus Q$, 
\[A = A_1 \oplus A_2 \quad \text{ and } USU^{*} = S_1  \oplus S_2\]
where $A_1$ and $S_1$ have the same spectral measure, hence $A_1 \in \mathcal{O}(S_1)$ by \cite{ArvKad}[Theorem 5.4] and $A_2 \prec S_2$ with the property that the spectral scales $f_{A_2}^{Q \mathcal{M} Q}$ and $f_{S_2}^{Q \mathcal{M} Q}$  satisfy $f_{A_2}^{Q \mathcal{M} Q} \neq f_{S_2}^{Q \mathcal{M} Q}$ almost everywhere. Since $A \notin \mathcal{O}(S)$, $A_2$ and $S_2$ are non-zero.

 Since the Ky Fan norm functions are continuous, we may find two points $\{a_1, a_2\}$ so that $F_{A_2}^{Q \mathcal{M} Q}(a_i) = F_{S_2}^{Q \mathcal{M} Q}(a_i)$ and $F_{A_2}^{Q \mathcal{M} Q}(x) < F_{S_2}^{Q \mathcal{M} Q}(x)$  for  $x \in (a_1,a_2)$. Pick a unitary $V$ that conjugates $\mu_{S_2}([a_1,a_2])$ onto $\mu_{A_2}([a_1,a_2])$ and commutes with $\mu_A(X)$. We may write, under the decomposition $I = \mu_A(X) \oplus \mu_{A_2}([a_1,a_2]) \oplus [I- \mu_A(X) - \mu_{A_2}([a_1,a_2])]$,
 \[A = A_1 \oplus A_3 \oplus A_4 \quad \text{ and } VUSU^{*}V^{*} = S_1  \oplus S_3 \oplus S_4\]
where $A_3 \prec S_3$ and whose respective spectral scales are non-equal a.e. Further, the Ky Fan norm functions satisfy $F_{A_3} < F_{S_3}$ on $(0,1)$. Note that we also have that $A_4 \prec S_4$. 

The proposition now applies to $(A_3, S_3)$ and yields the desired conclusion for $A$ and $VUSU^{*}V^{*}$.

\end{proof}

Given positive operators $A \in \mathcal{A}$ and $S \in \mathcal{M}$ as above, we say that $(U, P)$ is a \emph{partial solution} if $U$ is a unitary, $P$ is a projection in  $\mathcal{A}$, $E(PUSU^{*}P)=AP$ and $A(I-P)\prec (I-P)USU^{*}(I-P)$. With this notation, we have the following corollary,
\begin{corollary}\label{cor1}
 Let $\mathcal{A}$ be a masa in a type $II_1$ factor $\mathcal{M}$. Let  $A \in \mathcal{A}$ and $S \in \mathcal{M}$ be positive operators with $A \prec S$ and let $(U,P)$ be a partial solution. Assume that $A(I-P) \not\approx (I-P)USU^{*}(I-P)$ inside $(I-P)\mathcal{M}(I-P)$. Then, there are projections $P_1 \in \mathcal{A}$ and $Q_1 \in \mathcal{M}$ so that $P < P_1 < Q_1$ and $\tau(Q_1 - P_1) \leq 4\tau(P_1 - P)$ and a unitary $V$ in $\mathcal{M}$ so that $(V,P_1)$ is a partial solution and 
\begin{eqnarray}\label{Pres}
V-U= (Q_1-P)(V-U)
\end{eqnarray}

\end{corollary}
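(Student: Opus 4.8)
The plan is to reduce to Proposition \ref{prop1} by passing to the corner $\mathcal{N} := (I-P)\mathcal{M}(I-P)$, applying that proposition there, and lifting the output back to $\mathcal{M}$. Here $\mathcal{N}$ is again a type $II_1$ factor, and since $I-P \in \mathcal{A}$, the algebra $\mathcal{B} := \mathcal{A}(I-P)$ is a masa in $\mathcal{N}$ whose trace-preserving conditional expectation $E_{\mathcal{B}} : \mathcal{N} \to \mathcal{B}$ is just the restriction of $E$. I would set $A' := A(I-P) \in \mathcal{B}$ and $S' := (I-P)USU^{*}(I-P) \in \mathcal{N}$, both positive. The hypotheses defining the partial solution $(U,P)$ say precisely that $A' \prec S'$ in $\mathcal{N}$, while the extra assumption $A'\not\approx S'$ in $\mathcal{N}$, together with the equivalence from the Introduction between equimeasurability and membership in the norm-closed unitary orbit (valid in any type $II_1$ factor, in particular in $\mathcal{N}$), says $A' \notin \mathcal{O}(S')$ in $\mathcal{N}$. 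Hence Proposition \ref{prop1} applies to $(A',S')$ in $\mathcal{N}$ and furnishes a projection $P' \in \mathcal{B}$, a projection $Q'$ in $\mathcal{N}$ with $P' \le Q'$ and $\tau(Q') \le 4\tau(P')$ (a ratio, hence insensitive to the rescaling of $\tau$ on $\mathcal{N}$), and a unitary $U' \in \mathcal{N}$ with $U' - (I-P) = Q'(U'-(I-P))$, such that $E_{\mathcal{B}}(P'U'S'U'^{*}P') = AP'$ and $A'(I-P-P') \prec (I-P-P')U'S'U'^{*}(I-P-P')$. Inspecting the construction in the proof of Proposition \ref{prop1} also gives $P' \neq 0$ and $P' < Q'$.

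With this in hand I would set $P_1 := P+P'$, $Q_1 := P+Q'$ and $V := (U'+P)U$. Because $P' \le Q' \le I-P$, the elements $P_1 \in \mathcal{A}$ and $Q_1 \in \mathcal{M}$ are projections, and $P' \neq 0$, $P' < Q'$ give $P < P_1 < Q_1$; moreover $\tau(Q_1 - P_1) = \tau(Q') - \tau(P') \le 3\tau(P') \le 4\tau(P_1 - P)$. Since $U'$ is unitary in $\mathcal{N}$ with $U'P = PU' = 0$, the element $U'+P$ is a unitary of $\mathcal{M}$, so $V$ is unitary; and from $V - U = (U' - (I-P))U$ and $U' - (I-P) = Q'(U'-(I-P))$ I get $V - U = Q'(V-U) = (Q_1 - P)(V-U)$, which is (\ref{Pres}).

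It then remains to check that $(V,P_1)$ is a partial solution, and this should be a routine computation. Writing $T := USU^{*}$ and working in the $2\times 2$ block picture for $I = P \oplus (I-P)$, conjugation by $U'+P$ leaves the $P$-corner of $T$ untouched and implements $U'(\cdot)U'^{*}$ on the $(I-P)$-corner; in particular $P(VSV^{*})P = PTP$ and $(I-P)(VSV^{*})(I-P) = U'S'U'^{*}$. Expanding $E(P_1(VSV^{*})P_1)$ with $P_1 = P+P'$ into four pieces: the two diagonal pieces yield $E(PTP) = AP$ (because $(U,P)$ is a partial solution) and $E_{\mathcal{B}}(P'U'S'U'^{*}P') = AP'$ (the first conclusion of Proposition \ref{prop1}), while the two off-diagonal pieces vanish after applying $E$, since $P$ and $P'$ are orthogonal projections lying in the abelian range $\mathcal{A}$ of $E$; e.g.\ $E(P(VSV^{*})P') = P\,E(VSV^{*})\,P' = 0$. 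Hence $E(P_1VSV^{*}P_1) = AP_1$. Finally $I - P_1 = (I-P) - P' \le I-P$, so $(I-P_1)(VSV^{*})(I-P_1) = (I-P-P')U'S'U'^{*}(I-P-P')$, which by the second conclusion of Proposition \ref{prop1} majorizes $A(I-P-P') = A(I-P_1)$. Thus $(V,P_1)$ is a partial solution.

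I do not anticipate a genuine obstacle: the whole content is that Proposition \ref{prop1} is robust enough to be applied verbatim inside a corner and then transported back. The only step that needs a moment's care is the passage from $U'$ to $V = (U'+P)U$ --- one must see that it does not disturb the already-solved $P$-corner of $USU^{*}$, which is what lets the previous partial solution survive, while performing the new averaging on the orthogonal complement --- together with the trivial but essential orthogonality of $P$ and $P'$ in $\mathcal{A}$, which is what collapses the conditional-expectation computation into just the two diagonal contributions.
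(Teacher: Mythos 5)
Your proposal is correct and follows essentially the same route as the paper's own proof: apply Proposition \ref{prop1} inside the corner $(I-P)\mathcal{M}(I-P)$ with the masa $\mathcal{A}(I-P)$, then set $P_1 = P\oplus P'$, $Q_1 = P\oplus Q'$ and $V=(P\oplus U')U$ and transport the conclusions back. The only difference is that you spell out details the paper leaves as ``clear'' --- the vanishing of the off-diagonal terms under $E$ via $PE(\cdot)P'=0$, the trace bookkeeping, and the strictness $P<P_1<Q_1$ read off from the construction in Proposition \ref{prop1} --- which is fine.
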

\begin{proof}
Apply the previous proposition (\ref{prop1}) to $A(I-P)$ and $(I-P)USU^{*}(I-P)$ inside $(I-P)\mathcal{M}(I-P)$ to get a unitary $W$ in $(I-P)\mathcal{M}(I-P)$ with $W-I_{ (I-P)\mathcal{M}(I-P)} = Q_1(W-I_{ (I-P)\mathcal{M}(I-P)})$, a projection $Q_0$ in $(I-P)\mathcal{M}(I-P)$ and a projection $P_0$ in $A(I-P)$ so that (inside $(I-P)\mathcal{M}(I-P)$) we have 
\[E(P_0(I-P)USU^{*}(I-P)P_0) = A(I-P)P_0, \quad A(I-P)(I-P_0) \prec (I-P_0)(I-P)USU^{*}(I-P)(I-P_0) \] and also,
\[\tau_{(I-P)\mathcal{M}(I-P)}(Q_0)  \leq 4\tau_{(I-P)\mathcal{M}(I-P)}(P_0)\] 
Now, let $V = (P \oplus W)U$, let $P_1 = P \oplus P_0$ and let $Q_1=P\oplus Q_0$. Here, we interpret $W, P_0, Q_0$ which are operators in $(I-P)\mathcal{M}(I-P)$, in the natural fashion inside $\mathcal{M}$. It is clear that $V$ is a unitary; we see that 
\[\tau(Q_1 - P_1) = \tau(I-P)\tau_{(I-P)\mathcal{M}(I-P)}(Q_0) \leq 4\tau(I-P)\tau_{(I-P)\mathcal{M}(I-P)}(P_0)  \leq 4\tau(P_1 - P)\] and that
\[E(P_1VSV^{*}P_1)=AP_1, \quad A(I-P_1)\prec (I-P_1)USU^{*}(I-P_1)\]
Further, 
\[V-U = ((P\oplus W)-I)U = (W-(I-P))U = Q_0(W-(I-P))U = (Q_1-P)(V-U).\] We conclude that $(V,P_1)$ is a partial solution with the desired properties. 


\end{proof}

\section{Diagonals of positive operators in type $II_1$ factors}

We deduce a Schur-Horn theorem in type $II_1$ factors from corollary(\ref{cor1}) using an induction argument.

 \begin{theorem}\label{SHM1}
Let $\mathcal{A}$ be a masa in a type $II_1$ factor $\mathcal{M}$. If $A \in \mathcal{A}$ and $S \in \mathcal{M}$ are positive operators with $A \prec S$. Then, there is a unitary $U$ and a projection $P$ in $\mathcal{A}$ such that
\[E(PUSU^{*}P) = AP \quad \text{ and } \quad (I-P)USU^{*}(I-P) \approx A(I-P).\]

\end{theorem}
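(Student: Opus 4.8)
The plan is to iterate Corollary~\ref{cor1} via transfinite induction on an ordinal-indexed chain of partial solutions. Start with the trivial partial solution $(I,0)$: since $A\prec S$, we have $A(I-0)\prec (I-0)S(I-0)$, so $(I,0)$ is a partial solution. Suppose inductively we have built a partial solution $(U_\beta,P_\beta)$ for each $\beta<\gamma$ in such a way that the projections $P_\beta$ are increasing and, crucially, the tail sizes $\tau(I-P_\beta)$ are decreasing. At a successor stage $\gamma=\beta+1$: if $A(I-P_\beta)\approx (I-P_\beta)U_\beta SU_\beta^*(I-P_\beta)$ inside $(I-P_\beta)\mathcal{M}(I-P_\beta)$, we are done — set $P=P_\beta$, $U=U_\beta$ and stop. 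Otherwise apply Corollary~\ref{cor1} to obtain $(V,P_{\beta+1})$ with $P_\beta<P_{\beta+1}$, a new projection $Q_{\beta+1}$ with $\tau(Q_{\beta+1}-P_{\beta+1})\le 4\tau(P_{\beta+1}-P_\beta)$, and the locality condition $V-U_\beta=(Q_{\beta+1}-P_\beta)(V-U_\beta)$; put $U_{\beta+1}:=V$. At a limit stage $\gamma$, let $P_\gamma:=\bigvee_{\beta<\gamma}P_\beta$ (equivalently $\sup$ in the strong topology). The locality condition on the $U_\beta$'s should force $\{U_\beta\}_{\beta<\gamma}$ to be strongly convergent — each successive correction is supported on the shrinking region $Q_{\beta+1}-P_\beta\le I-P_\beta$, and these regions are nested with traces tending to $0$ along the chain — so set $U_\gamma:=\lim_\beta U_\beta$; one checks $U_\gamma$ is unitary and $(U_\gamma,P_\gamma)$ is again a partial solution, using that majorization and the expectation identity are preserved under the relevant limits (the expectation identity holds on each $P_\beta$ hence on $P_\gamma$ by normality, and $A(I-P_\gamma)\prec (I-P_\gamma)U_\gamma SU_\gamma^*(I-P_\gamma)$ follows by a limiting/continuity argument on the Ky~Fan functions).

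The key quantitative point that makes the induction terminate is the size control $\tau(Q_{\beta+1}-P_{\beta+1})\le 4\tau(P_{\beta+1}-P_\beta)$ together with the nesting coming from~(\ref{Pres}). I would use this to show that the increments $\tau(P_{\beta+1}-P_\beta)$ are summable along any chain, so that after countably many successor steps we reach a stage where either $P_\gamma=I$ (in which case $E(U_\gamma SU_\gamma^*)=A$ and we take $P=I$) or the residual pair $A(I-P_\gamma)$ and $(I-P_\gamma)U_\gamma SU_\gamma^*(I-P_\gamma)$ has become equimeasurable, which is exactly the stopping condition. A clean way to organize this: define, for a partial solution $(U,P)$, the ``defect'' to be $0$ if $A(I-P)\approx(I-P)USU^*(I-P)$ and otherwise some positive quantity (e.g.\ built from $L$ or from $\tau(I-P)$); show Corollary~\ref{cor1} strictly decreases it in a controlled way; and invoke a maximality argument (Zorn on the poset of partial solutions ordered by $P'\ge P$ and compatibility~(\ref{Pres})) to get a maximal partial solution, which by Corollary~\ref{cor1} must have zero defect, i.e.\ satisfy the conclusion.

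The main obstacle I anticipate is the limit-stage analysis: verifying that the chain of unitaries $\{U_\beta\}$ actually converges strongly to a unitary and that the partial-solution properties pass to the limit. The locality relations~(\ref{Pres}) are designed precisely to make $U_\beta$ Cauchy — consecutive unitaries differ only on $Q_{\beta+1}-P_\beta$, whose trace is $\le 4\tau(P_{\beta+1}-P_\beta)+\tau(P_{\beta+1}-P_\beta)$ times a controlled factor and these are summable — so $\sum_\beta \|U_{\beta+1}-U_\beta\|_2$ converges, giving an $\|\cdot\|_2$-limit $U_\gamma$; one then needs that $U_\gamma$ is unitary (not merely a contraction), which again follows from the locality structure forcing the partial isometry defects to vanish in the limit. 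Preservation of $E(P_\gamma U_\gamma S U_\gamma^* P_\gamma)=AP_\gamma$ is straightforward from normality of $E$ and the fact that each correction past stage $\beta$ leaves $P_\beta$-compressions untouched; preservation of the residual majorization $A(I-P_\gamma)\prec(I-P_\gamma)U_\gamma SU_\gamma^*(I-P_\gamma)$ requires a short continuity argument for $F_{A(I-P_\beta)}$ and $F_{(I-P_\beta)U_\beta SU_\beta^*(I-P_\beta)}$ as $\beta\to\gamma$. Once the limit stages are handled, the transfinite recursion closes and the theorem follows.
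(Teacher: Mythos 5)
Your overall strategy is the one the paper itself follows --- build partial solutions by iterating Corollary~\ref{cor1} and pass to a maximal/limiting one via Zorn, with the locality condition~(\ref{Pres}) and the factor-$4$ trace bound doing the work at limit stages --- but the two steps you defer are precisely where all the content lies, and the justifications you sketch for them do not hold up. First, the convergence argument: from $U_{\beta+1}-U_\beta=(Q_{\beta+1}-P_\beta)(U_{\beta+1}-U_\beta)$ you only get $||U_{\beta+1}-U_\beta||_2\leq ||Q_{\beta+1}-P_\beta||_2\,||U_{\beta+1}-U_\beta||\leq 2\sqrt{5\tau(P_{\beta+1}-P_\beta)}$, and summability of the traces $\tau(P_{\beta+1}-P_\beta)$ does \emph{not} give summability of their square roots, so the claim that $\sum_\beta||U_{\beta+1}-U_\beta||_2$ converges is unjustified (and a general chain need not even be countable, so a telescoping sum over consecutive terms is not available). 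The working argument compares \emph{arbitrary} pairs in the chain: transitivity of the order (which itself needs the lattice computation for $\tau(Q_1\vee Q_2)$ and is exactly where the factor $4$ is preserved) gives, for $\alpha<\beta$, a single witness $Q_\alpha^\beta$ with $\tau(Q_\alpha^\beta-P_\beta)\leq 4\tau(P_\beta-P_\alpha)$, whence $||(U_\beta-U_\alpha)T\Omega||_2^2\leq 20\,\tau(P_\beta-P_\alpha)\,||T||^2$; since $\tau(P_\alpha)$ is an increasing bounded net, this makes $(U_\alpha)$ strong-$*$ Cauchy, and strong-$*$ convergence (not just a vague ``defects vanish'') is what shows the limit is a unitary.

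Second, to invoke Zorn (or to continue a transfinite recursion past a limit ordinal) you must show the limit pair actually dominates every chain element \emph{in the order}, i.e.\ exhibit a projection $Q\geq P$ with $\tau(Q-P)\leq 4\tau(P-P_\alpha)$ and $U-U_\alpha=(Q-P_\alpha)(U-U_\alpha)$. Your remark that the correction regions are ``nested'' is false in general --- the witnessing projections $Q_{\beta}$ need not be nested --- and the required bound comes from a telescoping estimate of the form $\tau\bigl(\vee_{n\leq N}Q_{\alpha_n}^{\alpha_{n+1}}-P_{\alpha_{N+1}}\bigr)\leq 4\bigl(\tau(P_{\alpha_{N+1}})-\tau(P_{\alpha_1})\bigr)$, which is exactly what the slack factor $4$ in Corollary~\ref{cor1} is there to absorb; without this step the induction cannot be closed at limit stages. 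Finally, the termination argument ``the increments are summable, so after countably many successor steps the residual is equimeasurable'' is not valid: the increments can shrink to zero with $P_\gamma<I$ and the residual still not equimeasurable. The correct conclusion is the one you mention only in passing: a maximal element exists by Zorn once chains have upper bounds in the order, and Corollary~\ref{cor1} shows a maximal partial solution must have equimeasurable residual. As written, then, the proposal identifies the right skeleton but leaves genuine gaps at the chain-limit convergence, the dominance of the limit over the chain, and the termination step.
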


\begin{proof}

 If $A \in \mathcal{O}(S)$, there is nothing to prove; Just set $P$ to be zero and the unitary to be the identity. Let us therefore assume that $A \notin \mathcal{O}(S)$.
  
 Let $A \in  \mathcal{A}$ and $S \in \mathcal{M}$ be positive operators so that $A \prec S$. Let $\mathcal{X}$ be the collection of all tuples $(U,P)$ where $U$ is a unitary in $\mathcal{M}$ and $P$ is a projection in $\mathcal{A}$ with $E(PUSU^{*}P) = AP$ and $A(I-P) \prec (I-P)USU^{*}(I-P)$. Define an ordering $\leq$ on the set $\mathcal{X}$ by $(U_1,P_1) \leq (U_2,P_2)$ if
\begin{enumerate}
 \item $P_1 \leq P_2$, i.e $P_1 P_2 = P_1$,
 \item There is a projection $Q$, with $Q > P_2$ and satisfying $\tau(Q-P_2) \leq 4\tau(P_2-P_1)$ so that $U_2 - U_1 = (Q-P_1)(U_2-U_1)$. 
\end{enumerate}
 The set $\mathcal{X}$ with the given ordering is a poset. To see this, suppose $(U_1,P_1) \leq (U_2,P_2)$ and $(U_2,P_2) \leq (U_3,P_3)$; While showing that $(U_1,P_1) \leq (U_3,P_3)$, property $(1)$ is immediate. Let $Q_1, Q_2$ be the projections that ensure condition $(3)$ in the inequalities $(U_1,P_1) \leq (U_2,P_2)$ and $(U_2,P_2) \leq (U_3,P_3)$ respectively. Take $Q_3 = Q_1 \vee Q_2$. By definition, we have that $U_3-U_2 = (Q_2-P_2)(U_3-U_2)$ and  $U_2-U_1 = (Q_1-P_1)(U_2-U_1)$. This yields that $(I-Q_2+P_1)(U_2-U_1) = 0$ and since $I-Q_2$ is orthogonal to $P_1$, we have $(I-Q_2)(U_3-U_2) = P_2(U_3-U_2)= 0$. Similarly, $(I-Q_1)(U_2-U_1) = P_1(U_2-U_1)= 0$. We see that 
 \begin{eqnarray*}
 (I-Q_3)(U_3-U_1) &=& (I-Q_3)(U_3-U_2) + (I-Q_3)(U_2-U_1)\\
 &=&(I-Q_3)(I-Q_2)(U_3-U_2)+(I-Q_3)(I-Q_1)(U_2-U_1) \\
 &=&0
 \end{eqnarray*}
And further,
 \begin{eqnarray*}
 P_1(U_3 - U_1) &=& P_1(U_3 - U_2) + P_1(U_2 - U_1)\\
  &=& P_1P_2(U_3-U_2) + P_1(U_2-U_1) \\
  &=& 0
   \end{eqnarray*}
   Thus, $(I-Q_3+P_1)(U_3-U_1) = 0$, giving us that
   \begin{eqnarray}\label{comp1}
   U_3-U_1 = (Q_3-P_1)(U_3-U_1)
   \end{eqnarray}
 Next, recalling that $\tau(Q_1-P_2)\leq 4\tau(P_2-P_1)$ and $\tau(Q_2-P_3)\leq4\tau(P_3-P_2)$ and that $Q_1\wedge Q_2 \geq P_2$,
 \begin{eqnarray}\label{ind}
\nonumber \tau(Q_3 - P_3) &=& \tau(Q_1 \vee Q_2) - \tau(P_3)\\
\nonumber &=&   \tau(Q_1) + \tau(Q_2) - \tau(Q_1 \wedge Q_2) - \tau(P_3)\\
\nonumber &\leq& 5\tau(P_2) - 4\tau(P_1) + 5\tau(P_3) - 4\tau(P_2) - \tau(P_2) - \tau(P_3)\\
&=& 4\tau(P_3 - P_1)  
\end{eqnarray}
In the third line, we used the fact that $Q_1 \wedge Q_2 \geq P_2$. We conclude from (\ref{comp1}) and (\ref{ind}) that $(\mathcal{X},\leq)$ is a poset. 

In what follows, we consider $\mathcal{M}$ in its standard form, sitting inside $L^2(\mathcal{M},\tau)$. Let $\{(U_{\alpha},P_{\alpha})\}_{\alpha \in I}$ be a chain in $\mathcal{X}$. Since the projections $P_{\alpha}$ are increasing, they have a strong operator limit, which we denote by $P$. Fix operators $T$ and $S$ in $\mathcal{M}$. We claim that  $\operatorname{lim}_{\alpha}<U_{\alpha}T\Omega,S\Omega>$ exists. Fix $\epsilon > 0$. Since $P_{\alpha}$ converge in the SOT, there is an $\alpha$ so that if $\beta > \alpha$, then  $\tau(P_{\beta}-P_{\alpha}) < \epsilon$. Let $Q$ be the projection that witnesses $(U_{\alpha},P_{\alpha}) < (U_{\beta},P_{\beta})$, i.e. we have that 
\[P_{\beta} < Q, \qquad \tau(Q-P_{\beta}) \leq 4\tau(P_{\beta}-P_{\alpha}), \qquad  U_{\beta} - U_{\alpha} = (Q-P_{\alpha})(U_{\beta}-U_{\alpha}). \]
We have that
\begin{eqnarray*}
|<U_{\beta}T\Omega,S\Omega>-<U_{\alpha}T\Omega,S\Omega>|&=&|<(U_{\beta}-U_{\alpha})T\Omega,(Q-P_{\alpha})S\Omega>|\\
&=&|\tau((Q-P_{\alpha})(U_{\beta}-U_{\alpha})TS^{*})|\\
&\leq& ||U_{\beta}-U_{\alpha}||||T||||S^{*}||\tau(Q-P_{\alpha})\\
&\leq&10||T||||S||\tau(P_{\beta}-P_{\alpha})\\
&\leq& 10\epsilon ||T||||S||
\end{eqnarray*}
Define the sesquilinear forms on $L^2(\mathcal{M},\tau) \times L^2(\mathcal{M},\tau)$,
\[\phi_{\alpha}(\xi,\eta) = <U_{\alpha}\xi,\eta> \]
These converge pointwise on $\mathcal{M}\Omega \times \mathcal{M}\Omega$. Denote the limit by $\phi(\cdot,\cdot)$. Then, it is easy to see that $|\phi(T\Omega,S\Omega)| \leq ||T\Omega||_2||S\Omega||_2$ and this shows that $\phi$ is extendable as a sesquilinear form to $L^2(\mathcal{M},\tau) \times L^2(\mathcal{M},\tau)$, with $|\phi(\xi,\eta)| \leq ||\xi||||\eta||$ for all $\xi, \eta \in L^2(\mathcal{M},\tau)$ . By the Riesz representation theorem there is a contraction $U$ in $\mathcal{B}(L^{2}(\mathcal{M},\tau))$ such that 
\[ <U\xi,\eta> = \phi(\xi,\eta) = \operatorname{lim}_{\alpha}<U_{\alpha}\xi,\eta>\]
This means in particular that $U_{\alpha}$ converges to $U$ in the WOT and thus, $U$ is in $\mathcal{M}$. We now show that we actually have SOT convergence - This will imply that $U$ is in fact a unitary. 

Since the operators are bounded, it is enough to check for SOT convergence on the dense set $\mathcal{M}\Omega$. Let $T$ be in $\mathcal{M}$. Since $U_{\beta}$ converges in the WOT to $U$, we have that
\[||(U-U_{\alpha})T\Omega||_{2} \leq \operatorname{lim inf}_{\beta}||(U_{\beta}-U_{\alpha})T\Omega||_{2} \]
For any $\alpha < \beta$, let $Q_{\alpha}^{\beta}$ be the projection that witnesses $(U_{\alpha},P_{\alpha}) < (U_{\beta},P_{\beta})$. We have that,
\begin{eqnarray*}
||(U-U_{\alpha})T\Omega||_{2} &\leq& \operatorname{lim inf}_{\beta}||(U_{\beta}-U_{\alpha})T\Omega||_{2}\\
&=&\operatorname{lim inf}_{\beta}||(Q_{\alpha}^{\beta}-P_{\alpha})(U_{\beta}-U_{\alpha})T\Omega||_{2}\\
&=&\operatorname{lim inf}_{\beta}||\tau((Q_{\alpha}^{\beta}-P_{\alpha})(U_{\beta}-U_{\alpha})TT^{*}(U_{\beta}-U_{\alpha})^{*})\\
&\leq&\operatorname{lim inf}_{\beta}\tau(Q_{\alpha}^{\beta}-P_{\alpha})||T||^{2}||U_{\beta}-U_{\alpha}||^{2}\\
&\leq&\operatorname{lim inf}_{\beta} 20\tau(P_{\beta}-P_{\alpha})||T||^{2}\\
&\leq& 20\tau(P-P_{\alpha})||T||^{2}
\end{eqnarray*}
We used the fact that $\tau(Q_{\alpha}^{\beta}-P_{\alpha}) \leq 4\tau(P_{\beta}-P_{\alpha})$ and hence, that $\tau(Q_{\alpha}^{\beta}-P_{\alpha}) \leq 5\tau(P_{\beta}-P_{\alpha})$ in line $(5)$. 

It follows that $U_{\alpha}$ converges to $U$ in the SOT. A similar calculation shows that $U_{\alpha}^{*}$ converges in the SOT to $U^{*}$. Since the $U_{\alpha}$ are uniformly bounded in norm(by $1$), we have that $U_{\alpha}^{*}U_{\alpha}$ converges in the SOT to $U^{*}U$ and thus $U^{*}U = I$. We conclude that $U$ is a unitary. 

The strong $*$ convergence of the $U_{\alpha}$ to $U$ implies that the automorphisms $\operatorname{Ad}(U_{\alpha})$ converge in the point $2$ norm topology to $\operatorname{Ad}(U)$. Now, we have that
\[E(PUSU^{*}P) = \operatorname{lim}_{SOT} E(P_{\alpha}U_{\alpha}SU_{\alpha}^{*}P_{\alpha}) = \operatorname{lim}_{SOT} AP_{\alpha} = AP\] 

Concerning majorization, $ A(I-P_{\alpha}) \prec (I-P_{\alpha})U_{\alpha}SU_{\alpha}^{*}(I-P_{\alpha}) $ for every $\alpha$ and hence, passing to the strong operator limit,  \[A(I-P) \prec (I-P)USU^{*}(I-P).\] 

We conclude that $(U,P)$ is in $\mathcal{X}$. We now show that for every $\alpha$, we have that $(U_{\alpha},P_{\alpha}) < (U,P)$. Pick a sequence $\alpha_n$, $n = 1,2, \cdots$ in $I$ with $P_{\alpha_1}=P_{\alpha}$ so that $P_{\alpha_n} $ is increasing and converges to $P$ in the SOT. As above, let $Q_{\alpha_n}^{\alpha_m}$ for $n < m$ be the projection that witnesses $(U_{\alpha_n},P_{\alpha_n}) < (U_{\alpha_m},P_{\alpha_m})$. Let $Q$ be the projection 
\[Q := \vee_{n} Q_{\alpha_n}^{\alpha_{n+1}}\]

For any $N$, we have that 
\[\tau(\vee_{1}^{N} Q_{\alpha_n}^{\alpha_{n+1}}) = \tau(Q_{\alpha_N}^{\alpha_{N+1}}) + \tau(\vee_{1}^{N-1} Q_{\alpha_n}^{\alpha_{n+1}}) - \tau(Q_{\alpha_N}^{\alpha_{N+1}} \wedge\vee_{1}^{N-1} Q_{\alpha_n}^{\alpha_{n+1}})\]
Since $Q_{\alpha_m}^{\alpha_{m+1}}$ is larger than $P_{\alpha_{m+1}}$ and a fortiori larger than $P_{\alpha_{k}}$ for $k \leq m+1$, we have that  $P_{\alpha_N} \leq Q_{\alpha_N}^{\alpha_{N+1}} \wedge\vee_{1}^{N-1} Q_{\alpha_n}^{\alpha_{n+1}}$. Further, $\tau(Q_{\alpha_N}^{\alpha_{N+1}}) \leq 5\tau(P_{\alpha_{N+1}})-4\tau(P_{\alpha_N})$. As a result,

\begin{eqnarray*}
\tau(\vee_{1}^{N} Q_{\alpha_n}^{\alpha_{n+1}} - P_{\alpha_{N+1}})  &\leq& 5\tau(P_{\alpha_{N+1}})-4\tau(P_{\alpha_N}) + \tau(\vee_{1}^{N-1} Q_{\alpha_n}^{\alpha_{n+1}}) - \tau(P_{\alpha_N}) - \tau(P_{\alpha_{N+1}})\\
&=&4(\tau(P_{\alpha_{N+1}})-\tau(P_{\alpha_N})) +  \tau(\vee_{1}^{N-1} Q_{\alpha_n}^{\alpha_{n+1}} - P_{\alpha_{N}})
\end{eqnarray*}
The sum telescopes to yield
\[\tau(\vee_{1}^{N} Q_{\alpha_n}^{\alpha_{n+1}} - P_{\alpha_{N+1}})  \leq 4(\tau(P_{\alpha_{N+1}})-\tau(P_{\alpha_1})) = 4(\tau(P_{\alpha_{N+1}})-\tau(P_{\alpha}))\] 
Taking the limit as $N \rightarrow \infty$, we get that
\[\tau(Q-P) \leq 4\tau(P-P_{\alpha})\]

We conclude that $(U_{\alpha},P_{\alpha}) < (U,P)$ for every $\alpha$. Thus, every chain has an upper bound and now, Zorn's lemma gives us that there is a maximal element in $\mathcal{X}$. Let $(U,P)$ be this maximal element. If $A(I-P)$ is not equimeasurable to $(I-P)USU^{*}(I-P)$, corollary (\ref{cor1}) applies and yields us a larger element in $\mathcal{X}$, yielding a contradiction. We conclude that there is a unitary $U$ and a projection $P\in \mathcal{A}$ so that 
\[E(PUSU^{*}P) = AP \quad \text{ and } \quad (I-P)USU^{*}(I-P) \approx A(I-P).\]

\end{proof}

We now prove the first of the two generalizations of the Schur-Horn theorem to type $II_1$ factors. We repeat the statement of the theorem for the convenience of the reader. 

\begin{theorem}\label{conj1}[The Schur-Horn theorem in type $II_1$ factors I]
Let $\mathcal{M}$ be a type $II_1$ factor and let $A, S \in \mathcal{M}$ be positive operators with $A \prec S$. Then, there is some masa $\mathcal{A}$ in $\mathcal{M}$ such that \[E_{\mathcal{A}}(S) \approx A.\]
\end{theorem}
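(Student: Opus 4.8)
The plan is to reduce Theorem \ref{conj1} to Theorem \ref{SHM1}, which we have just proved. Theorem \ref{SHM1} gives us, starting from $A \prec S$ with $A \in \mathcal{A}$ for some fixed masa $\mathcal{A}$, a unitary $U$ and a projection $P \in \mathcal{A}$ so that $E_{\mathcal{A}}(PUSU^{*}P) = AP$ and $(I-P)USU^{*}(I-P) \approx A(I-P)$ inside $(I-P)\mathcal{M}(I-P)$. The point is that this data packages $A$ as ``almost'' the conditional expectation of a unitary conjugate of $S$, and the remaining discrepancy lives entirely on the corner $(I-P)\mathcal{M}(I-P)$, where we only have equimeasurability rather than an exact diagonal equation. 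So the first step is: fix any masa $\mathcal{A}$ containing $A$ (such a masa exists since $A$ is normal, indeed self-adjoint) and apply Theorem \ref{SHM1} to $A \prec S$ to obtain such a pair $(U,P)$. Replacing $S$ by $USU^{*}$, which does not change anything since we are free to choose the masa at the end, we may assume $E_{\mathcal{A}}(PSP) = AP$ and $(I-P)S(I-P) \approx A(I-P)$.

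The second step is to absorb the corner. On $(I-P)\mathcal{M}(I-P)$ we have two positive operators $A(I-P)$ and $(I-P)S(I-P)$ which are equimeasurable, i.e. $A(I-P) \in \mathcal{O}((I-P)S(I-P))$ by the equivalence recorded in \eqref{EM}. Now choose any masa $\mathcal{A}_1$ of $(I-P)\mathcal{M}(I-P)$ that contains the operator $(I-P)S(I-P)$ itself (again possible since it is self-adjoint). Then trivially $E_{\mathcal{A}_1}\big((I-P)S(I-P)\big) = (I-P)S(I-P) \approx A(I-P)$. The third step is to glue: set $\mathcal{B} = \mathcal{A}P \oplus \mathcal{A}_1$, which is a masa of $\mathcal{M}$ because $P \in \mathcal{A}$ so $\mathcal{A}P$ is a masa of $P\mathcal{M}P$, and the direct sum of masas of the two corners across an orthogonal decomposition $I = P \oplus (I-P)$ with $P$ in the algebra is a masa of the whole. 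Since $P \in \mathcal{B}$ commutes with everything, the conditional expectation $E_{\mathcal{B}}$ is block-diagonal with respect to $P$: for $T \in \mathcal{M}$, $E_{\mathcal{B}}(T) = E_{\mathcal{A}P}(PTP) \oplus E_{\mathcal{A}_1}((I-P)T(I-P))$. Applying this to $S$ gives $E_{\mathcal{B}}(S) = AP \oplus (I-P)S(I-P)$.

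The final step is to verify $E_{\mathcal{B}}(S) \approx A$. We have $E_{\mathcal{B}}(S) = AP \oplus (I-P)S(I-P)$ and $A = AP \oplus A(I-P)$, and $(I-P)S(I-P) \approx A(I-P)$; since equimeasurability of the $(I-P)$-blocks together with equality of the $P$-blocks implies equimeasurability of the direct sums (the spectral scale of a block-diagonal operator is obtained by interleaving the scales of the blocks, weighted by the traces of the projections, and this construction respects $\approx$ blockwise), we conclude $E_{\mathcal{B}}(S) \approx A$, as desired. The main obstacle, such as it is, is purely bookkeeping: one must be careful that $\mathcal{A}P$ really is a masa of $P\mathcal{M}P$ (which holds because $P$ is in the masa $\mathcal{A}$), that the direct sum $\mathcal{A}P \oplus \mathcal{A}_1$ is genuinely maximal abelian in $\mathcal{M}$, and that the block decomposition of $E_{\mathcal{B}}$ is valid — all standard, but worth stating cleanly. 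There is no serious analytic difficulty here; the entire weight of the argument has already been discharged in Theorem \ref{SHM1}.
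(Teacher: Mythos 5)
Your proposal is correct and follows essentially the same route as the paper: apply Theorem \ref{SHM1}, choose a masa of the corner $(I-P)\mathcal{M}(I-P)$ containing $(I-P)USU^{*}(I-P)$, glue it to $\mathcal{A}P$, and note the expectation of $USU^{*}$ is $AP \oplus (I-P)USU^{*}(I-P) \approx A$. The only cosmetic difference is that you absorb the unitary at the start (``replace $S$ by $USU^{*}$''), whereas the paper conjugates the final masa by $U^{*}$ at the end; these are the same move, since equimeasurability is preserved under unitary conjugation.
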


\begin{proof}
Choose a masa $\mathcal{A}_1$ such that $A$ belongs to $\mathcal{A}_1$. Theorem (\ref{SHM1}) yields that there is a unitary $U$ in $M$ and a projection $P$ in $\mathcal{A}_1$ such that 
\[E_{\mathcal{A}_1}(PUSU^{*}P) = AP \quad \text{ and } \quad (I-P)USU^{*}(I-P) \approx A(I-P).\] Choose a masa $\tilde{\mathcal{A}}$ in $(I-P)\mathcal{M}(I-P)$ that contains $(I-P)USU^{*}(I-P)$. Then, we have that 
\begin{eqnarray}\label{FC}
E_{\mathcal{A}_1 P \oplus \tilde{\mathcal{A}}}(USU^{*}) = AP \oplus (I-P)USU^{*}(I-P) 
\end{eqnarray}

Note that we have that $A \approx AP \oplus (I-P)USU^{*}(I-P)$ . Let $\mathcal{A}$ be the masa $U^{*}(\mathcal{A}_1 P \oplus \tilde{\mathcal{A}})U$. We then get by applying the automorphism $\operatorname{Ad}(U^{*})$ to the equation (\ref{FC}) that 
\[E_{\mathcal{A}}(S) = U^{*}(AP \oplus (I-P)USU^{*}(I-P))U\]
Since $U^{*}(AP \oplus (I-P)USU^{*}(I-P))U \approx AP \oplus (I-P)USU^{*}(I-P) \approx A$, we are done. 
\end{proof}

\begin{remark}
The above theorem, as remarked in the introduction, is a natural generalization of the Schur-Horn theorem to type $II_1$ factors. This generalization does not directly imply the alternative conjecture of Arveson and Kadison from \cite{ArvKad}. We prove the conjecture in full in the next section.  
\end{remark}

\section{Proof of the Arveson-Kadison conjecture}
We now turn to the second natural generalization of the Schur-Horn theorem. The theorem of the last section characterizes the spectral distributions of operators that arise as the ``diagonal'' of a given positive operator $S$. On the other hand, the conjecture of Arveson and Kadison complements the abovementioned theorem by characterizing the spectral distributions of operators which have a prescribed diagonal $A$. 

The calculations in this section are straightforward but technical. Perhaps a few words about the idea of the proof might be helpful. Let $\mathcal{A}$ be a masa in type $II_1$ factor $\mathcal{M}$ and let $A \in \mathcal{A}$ and $S \in \mathcal{M}$ be positive elements so that $A \prec S$. Theorem (\ref{SHM1}) in the last section says that there is a unitary $U$ and a projection $P$ so that if we write write out $A$ and $USU^{*}$ in block matrix form with diagonal $P \oplus (I-P)$, 
 \[USU^{*}=\left( \begin{array}{cc}
S_1 & \ast\\
\ast & S_2 \end{array} \right), \qquad  A= \left( \begin{array}{cc}
A_1 & \ast\\
\ast & A_2 \end{array} \right), \]
then $E(S_1) = A_1$ and $S_2\cong A_2$ inside  $P\mathcal{M}P$ and $(I-P)\mathcal{M}(I-P)$ respectively. Even though $A_2$ and $S_2$ are approximately unitarily equivalent inside $(I-P)\mathcal{M}(I-P)$, we cannot expect to use these to implement an approximate unitary equivalence between $USU^{*}$ and an operator  of the form $\left( \begin{array}{cc}
S_1 & \ast\\
\ast & A_2 \end{array} \right)$. Nevertheless, there is a workaround; Let us look closely at what $A \prec S$ means in terms of the spectral scales $f_A$ and $f_S$. Roughly speaking, $f_S(x)$ is larger than $f_A(x)$ for $x$ close to $0$ and smaller for  $x$ close to $1$. Rather than work with $A$ and $S$ directly, we will work with $PAP$ and $QSQ$ where $P$ and $Q$ are carefully chosen spectral projections supported away from the extreme points of the spectra. We will apply theorem (\ref{SHM1}) to $PAP$ and $QSQ$ and might end up with pieces that are equimeasurable as above. We will then use the 'reserved' head and tail of the spectra that we have hitherto left untouched to massage the equimeasurable parts carefully, in order to achieve the desired diagonal.

The main result in this section is the proof of theorem (\ref{conj2}). We first prove a couple of lemmas. We will use the notation $f_S > f_A$ as shorthand for $f_S(x) > f_A(x)$ for all $x \in [0,1]$. 

\begin{lemma}\label{SHT2lem}
Let $\mathcal{A}$ be a masa in a type $II_1$ factor $\mathcal{M}$ and let $A \in \mathcal{A}$ and $S \in \mathcal{M}$ be two positive operators commuting with a projection $P$ of trace $\frac{1}{2}$ in $\mathcal{A}$, written with respect to the decomposition $I = P \oplus I-P$, 
\[S = \left( \begin{array}{cc}
S_{1} & 0\\
0 & S_2 \end{array} \right),   \quad A = \left( \begin{array}{cc}
A_1 & 0\\
0 & A_2 \end{array} \right)\]
where $f_{S_1}^{P\mathcal{M}P} - f_{A_1}^{P\mathcal{M}P} > 0$ and $\sigma^{P\mathcal{M}P}(A_1) \geq \sigma^{P\mathcal{M}P}(A_2)$ and further, $A_2 \approx S_2$. Then, for any $\delta > 0$, there is a unitary $U$ and  projections $R_1 \leq P$  and $R_2 \leq I-P$, both in $\mathcal{A}$, with $\tau(R_i) > \frac{1}{2} - \delta$ for $i = 1,2$ such that $E_{\mathcal{A}R_1}(R_1USU^{*}R_1) = A_1 R_1$ and $f_{R_2USU^{*}R_2}^{R_2\mathcal{M}R_2} - f_{A_2 R_2}^{R_2\mathcal{M}R_2} > 0$. 
\end{lemma}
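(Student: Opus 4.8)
The plan is to transport a carefully sized spectral slice of $S_1$ out of the corner $P$ and into $I-P$. Since $f_{S_1}>f_{A_1}$ forces $\tau(S_1)>\tau(A_1)$, the ``excess'' mass must leave $P$; and because $\sigma(A_1)\geq\sigma(A_2)$ and $A_2\approx S_2$ give $\alpha(S_1)\geq\|A_2\|$, every point of $\sigma(S_1)$ is at least $\|A_2\|$, so pushing a slice of $S_1$ into $I-P$ automatically lifts the spectral scale there above that of $A_2$. After the transport the $P$-block will majorize $A_1$ with strictly positive Ky Fan slack, and I will feed this into Theorem \ref{SHM1}.

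First I would fix the numerology. Note $\tau(A_2)\leq\tau(A_1)<\tau(S_1)$. Letting $p_t\leq P$ be the bottom $t$-trace spectral projection of $S_1$ and $g_t\leq I-P$ the bottom $t$-trace spectral projection of $S_2$, the quantity $\psi(t)=\tau(S_1)-\tau(p_tS_1p_t)+\tau(g_tS_2g_t)$ is continuous and non-increasing, with $\psi(0)=\tau(S_1)$ and $\psi(\tfrac{1}{2})=\tau(S_2)=\tau(A_2)$, so there is $t^{*}\in(0,\tfrac{1}{2}]$ with $\psi(t^{*})=\tau(A_1)$. (If $t^{*}=\tfrac{1}{2}$ then $\tau(A_1)=\tau(A_2)$, which with the hypotheses forces $A$ to be a scalar and is disposed of separately with $U$ the flip of $P$ and $I-P$.) Put $p:=p_{t^{*}}$, $g:=g_{t^{*}}$, and let $V$ be the unitary interchanging $p$ and $g$ and fixing $I-p-g$. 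Since $p,g$ are spectral projections of $S_1,S_2$, the operator $\widehat S:=VSV^{*}$ is again block diagonal for $P\oplus(I-P)$: $\widehat S=\widehat S_1\oplus\widehat S_2$ with $\widehat S_1=(P-p)S_1(P-p)\oplus (g S_2 g)$ on $P$ and $\widehat S_2=((I-P)-g)S_2((I-P)-g)\oplus (p S_1 p)$ on $I-P$.

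Next I would check that inside $P\mathcal{M}P$ one has $A_1\prec\widehat S_1$ with $F_{\widehat S_1}-F_{A_1}>0$ on $(0,1)$. Removing the bottom slice of $S_1$ leaves all of its head partial sums intact, so on the ``$S_1$ part'' the difference $F_{\widehat S_1}-F_{A_1}$ equals $\phi:=F_{S_1}-F_{A_1}$, which is strictly increasing and strictly positive on $(0,1)$; on the ``$S_2$ part'' the incoming eigenvalues all lie $\leq\|A_2\|\leq\alpha(A_1)$, so there $F_{\widehat S_1}-F_{A_1}$ decreases monotonically, and by the choice of $t^{*}$ it decreases from $\phi(1-2t^{*})>0$ exactly to $0$ at the right endpoint. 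Hence $F_{\widehat S_1}-F_{A_1}$ is strictly positive on $(0,1)$ and vanishes at the endpoints. Now apply Theorem \ref{SHM1} to $(A_1,\widehat S_1)$ inside $P\mathcal{M}P$. The key point is that the equimeasurable remainder produced is necessarily trivial: the operations used to enlarge partial solutions (Corollary \ref{cor1} via Proposition \ref{prop1}, with the last clause of Lemma \ref{lem1}) preserve strict positivity of $F-F$ on $(0,1)$, and a nonzero corner on which the two Ky Fan functions coincide cannot carry such a strict inequality. Thus one gets a unitary $\widehat U\in P\mathcal{M}P$ with $E_{\mathcal{A}P}(\widehat U\widehat S_1\widehat U^{*})=A_1$ on all of $P$. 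Setting $U:=(\widehat U\oplus I_{I-P})V$ we obtain $USU^{*}=(\widehat U\widehat S_1\widehat U^{*})\oplus\widehat S_2$, so $R_1:=P$ satisfies the first conclusion. For the second I would take $R_2\leq I-P$ in $\mathcal{A}$ a top spectral projection of $A_2$ with $\tau(R_2)>\tfrac{1}{2}-\delta$ and check $f_{R_2\widehat S_2R_2}>f_{A_2R_2}$, using that $\widehat S_2$ carries the slice $pS_1p$, all of whose spectrum exceeds $\|A_2\|\geq\|A_2R_2\|$, at the top, while its remaining part is $S_2$ with a slice deleted, which already dominates the scale of $A_2$.

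The principal obstacle is the passage from Theorem \ref{SHM1} to a conditional-expectation identity on a corner of trace $>\tfrac{1}{2}-\delta$ (ideally all of $P$): \ref{SHM1} a priori yields the identity only on a subcorner $Q\leq P$ with an equimeasurable leftover on $P-Q$, and ruling the leftover out requires showing that the strict slack $F_{\widehat S_1}-F_{A_1}>0$ propagates through every stage of the construction inside \ref{SHM1}, including the transfinite limits of chains; the last clause of Lemma \ref{lem1} is precisely the finite step of this. A secondary difficulty, to be met by choosing the slivers $P-R_1$ and $(I-P)-R_2$ as spectral projections of $A_1$ and $A_2$ and by taking $t^{*}$ large enough, is to keep both $\tau(R_1),\tau(R_2)>\tfrac{1}{2}-\delta$ while retaining $A_1\prec\widehat S_1$ and the strict domination on $R_2$; the tightest situation is when $A_2$, hence $S_2$, has a large spectral atom at the bottom of its spectrum, where the strict domination on $R_2$ near the right endpoint of its scale has to be squeezed out with care.
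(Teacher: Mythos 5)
Your construction (swap a bottom slice of $S_1$ of carefully chosen trace with a bottom slice of $S_2$, then solve exactly inside the corner $P$) is genuinely different from the paper's argument, but it has a gap at its core step. The tool you need is: if $A_1\prec\widehat S_1$ with $F_{\widehat S_1}-F_{A_1}>0$ on $(0,1)$, then a unitary of $P\mathcal{M}P$ conjugates $\widehat S_1$ to have diagonal exactly $A_1$ on \emph{all} of $P$. That is Theorem \ref{Uthm}, which in the paper is only obtained after Lemma \ref{SHT2lem}, via Proposition \ref{SH1Part} and Corollary \ref{cor2} whose proofs use this very lemma; so it cannot be quoted here, and your proposed extraction of it from the Section 3--4 machinery --- ``strict positivity of $F-F$ propagates through every stage, including the transfinite limits of chains'' --- does not hold up: a strict inequality of Ky Fan functions on the complementary corner degrades to a non-strict one when one passes to the limit of a chain in the Zorn argument of Theorem \ref{SHM1}, and nothing in Proposition \ref{prop1}, Corollary \ref{cor1} or Theorem \ref{SHM1} bounds the trace of the equimeasurable leftover, so you obtain neither $R_1=P$ nor even $\tau(R_1)>\tfrac12-\delta$. (The paper circumvents exactly this by the quantitative halving of Corollary \ref{cor2}, which rests on the present lemma.) Moreover the input you want to feed in can itself fail: $F_{\widehat S_1}-F_{A_1}$ need not be strictly positive on $(0,1)$. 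For instance take $S_2=A_2=I-P$, $f_{A_1}=2$ on $[0,\tfrac34)$ and $=1$ on $[\tfrac34,1)$, $f_{S_1}\equiv 2.5$; then $t^{*}=\tfrac12$ and $F_{\widehat S_1}-F_{A_1}$ vanishes identically on $[\tfrac34,1]$, because the imported slice of $S_2$ and the tail of $A_1$ are both flat at the same value $\|A_2\|=\alpha(A_1)$.

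The second conclusion also fails for your construction. After the swap, $\widehat S_2$ is the imported slice $pS_1p$ (of trace $t^{*}$, possibly very small) together with the top part of $S_2$, whose spectral scale is just $f_{A_2}$ shifted to the right by $t^{*}$; a shift of a non-increasing function dominates it only \emph{non}-strictly wherever $f_{A_2}$ is flat. So when $A_2$ has a large atom --- in particular when $A_2$ is a scalar, the case needed later for the carpenter problem --- there is no projection $R_2$ of trace $>\tfrac12-\delta$ on which $f_{R_2USU^{*}R_2}$ strictly exceeds $f_{A_2R_2}$: only a piece of trace about $t^{*}$ gets strictly raised. (There is also the smaller point that $p$ and $g$ are spectral projections of $S_1,S_2$, not of $A$, so your $R_2\in\mathcal{A}$ need not commute with $USU^{*}$; that is repairable by an extra alignment unitary supported in $I-P$, but the strictness defect is not.) This is precisely what the paper's proof is built to handle: it matches many short spectral intervals of $A_1$ and $A_2$, applies Lemma \ref{2matlem} with Remark \ref{greater} on each matched pair $(P_i,Q_i)$ so that the residue on $Q_i$ lands strictly inside the convex hull of the local spectrum, and then shifts the residues down one interval with the unitary $W$; the strict gap on $R_2=\sum_{i\ge 2}Q_i$ comes from that downward shift rather than from any global trace count, at the cost of discarding a single interval, which is where the $\delta$ enters.
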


\begin{proof}
Let $\delta  > 0$ be fixed. It is easy to see(using the fact that the spectral scales $f_A$ and $f_S$ are right continuous and non increasing) that we may find a natural number $k$, a number $\epsilon > 0$  and disjoint intervals $(a_1,a_1+\epsilon), \cdots, (a_{k+1},a_{k+1}+\epsilon)$ in $[0,1]$ with $a_1 < a_2 \cdots < a_k < a_{k+1}$ such that 
\begin{enumerate}
 \item $k \, \epsilon > 1 - 2\delta$. And, 
 \item $f_{S_1}^{P\mathcal{M}P}(x) - f_{A_1}^{P\mathcal{M}P}(y) > 0$  for $x,y \in [a_{i},a_{i}+\epsilon]$ for $i = 1, \cdots, k+1$.
\end{enumerate}

Define the projections
\[P_i:=\mu_{A_1}^{P\mathcal{M}P}((a_{i},a_{i}+\epsilon)), \quad Q_i:=\mu_{A_2}^{(I-P)\mathcal{M}(I-P)}((a_{i},a_{i}+\epsilon)), \quad i = 1, \cdots, k+1.\] Pick a unitary $U_1$ in $P\mathcal{M}P$ that  conjugates $\mu_{S_1}^{P\mathcal{M}P}((a_{i},a_{i}+\epsilon))$ onto $P_i$ and a unitary $U_2$ in $(I-P)\mathcal{M}(I-P)$ that conjugates $\mu_{S_2}^{(I-P)\mathcal{M}(I-P)}((a_{i},a_{i}+\epsilon))$ onto $Q_i$ for $i = 1, \cdots, k+1$. Let $U: = U_1 \oplus U_2$ and let $T:= USU^{*}$. Then, $T$ commutes with the projections $P_i$ and $Q_i$.

For $i = 1, \cdots k+1$, let $X_i := T(P_i \oplus Q_i)$ and $Y_i := A(P_i \oplus Q_i)$. Inside $(P_i \oplus Q_i) \mathcal{M} (P_i \oplus Q_i)$, we may write
\[X_i=\left( \begin{array}{cc}
X_i^1 & 0\\
0 & X_i^2 \end{array} \right), \qquad  Y_i= \left( \begin{array}{cc}
Y_i^1 & 0\\
0 & Y_i^2 \end{array} \right), \]
where 
\[\sigma_{P_i\mathcal{M}P_i}(X_i^1) = \sigma_{P_i\mathcal{M}P_i}(P_iTP_i) \geq f_{S_1}(a_i+\epsilon) > f_{A_1}(a_i) \geq \sigma_{P_i\mathcal{M}P_i}(P_iAP_i) = \sigma_{P_i\mathcal{M}P_i}(Y_i^1)\]
and since, $\sigma_{P\mathcal{M}P}(A_1) \geq \sigma_{P\mathcal{M}P}(A_2) = \sigma_{P\mathcal{M}P}(S_2)$, we have that
\[\sigma_{P_i\mathcal{M}P_i}(Y_i^1) \subset \sigma_{P\mathcal{M}P}(A_1) \geq \sigma_{(I-P)\mathcal{M}(I-P)}(S_2) \supset \sigma_{(I-P)\mathcal{M}(I-P)}(X_i^2)\]
We see that for each $i = 1, \cdots, k+1$, the pair of operators $Y_i=A (P_i \oplus Q_i)$ and $X_i=T (P_i \oplus Q_i)$ inside $(P_i \oplus Q_i) \mathcal{M} (P_i \oplus Q_i)$ satisfy the hypothesis of the remark following lemma(\ref{2matlem}) and we may thus find unitaries $V_i$ in $(P_i \oplus Q_i) \mathcal{M} (P_i \oplus Q_i)$ such that 
\begin{eqnarray}\label{fac1}
E_{\mathcal{A}P_i} (P_i V_i X_i V_i^{*} P_i) = Y_i P_i,  \quad \sigma (Q_i V_i X_i V_i^{*} Q_i) \subset \operatorname{int}{\operatorname{conv}(\sigma(X_i)) }
\end{eqnarray}

Let $W$ be a unitary in $(I-P)\mathcal{M}(I-P)$ that conjugates $Q_{i}$ onto $Q_{i+1}$ for $i = 2, \cdots k+1$, i.e. $WQ_{i}W^{*}=Q_{i+1}$. The second fact above gives us that 

\begin{eqnarray}\label{fac2}
\sigma(Q_{i+1} W V_i X_i V_i^{*} W^{*} Q_{i+1})=\sigma( Q_iV_i X_i V_i^{*} Q_i) > \sigma(Q_{i+1} A Q_{i+1}) 
\end{eqnarray}

Let $U$ be the unitary $(P \oplus W ) (I - \sum_i^{k+1} (P_i \oplus Q_i) + \sum_i^{k+1} V_i)$. Also, let $R_1 = \sum_1^{k+1} P_i$ and $R_2 = \sum_2^{k+1} Q_i$. The two facts, (\ref{fac1}) and (\ref{fac2}) give us that
\[E_{\mathcal{A}R_1}(R_1USU^{*}R_1) = A_1 R_1, \quad f_{R_2USU^{*}R_2}^{R_2\mathcal{M}R_2} - f_{A_2 R_2}^{R_2\mathcal{M}R_2} > 0\]
Finally, we have that $\tau(R_2) = k \epsilon$ and $\tau(R_1) = (k+1)\epsilon$ and both are greater than $1 - 2\delta$. We are done. 

\end{proof}

\begin{lemma}\label{Eqmlem}
Let $\mathcal{A}$ be a masa in a type $II_1$ factor $\mathcal{M}$ and let $A \in \mathcal{A}$ and $S \in \mathcal{M}$ be two positive operators commuting with a projection $P$ in $\mathcal{A}$, written with respect to the decomposition $I = P \oplus I-P$, 
\[S = \left( \begin{array}{cc}
S_{1} & 0\\
0 & S_2 \end{array} \right),  \quad A = \left( \begin{array}{cc}
A_1 & 0\\
0 & A_2 \end{array} \right)\] 
Assume $f_{S_1}^{P\mathcal{M}P} - f_{A_1}^{P\mathcal{M}P} > 0$ and $\sigma(A_1) \geq \sigma(A_2)$ and $A_2 \approx S_2$. Then, there is a projection $Q$ in $\mathcal{A}$ with $\tau(Q) \geq 1 - 2\tau(P)$ and a unitary $U$ such that 
\[E_{\mathcal{A}}(QUSU^{*}Q) = AQ \]
\end{lemma}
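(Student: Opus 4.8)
If $\tau(P)\ge\tfrac12$ the statement is vacuous (take $Q=0$), so assume $\tau(P)<\tfrac12$. The plan is to iterate Lemma \ref{SHT2lem}: each application realizes an $A$-block of trace roughly $\tau(P)$ as a genuine diagonal and attaches it to a growing projection $Q$, while leaving behind, on a strictly smaller corner, a configuration again of the form ``one strictly-submajorized block next to one equimeasurable block'' — so the step can be repeated. A Zorn's-lemma argument of exactly the type used in the proof of Theorem \ref{SHM1} then yields a maximal $(U,Q)$, and the trace bookkeeping delivers $\tau(Q)>1-2\tau(P)$.

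\emph{Setting up one step.} Let $P'\le I-P$ be the spectral projection of $A_2$ of trace $\tau(P)$ carrying its largest values, so $P'\in\mathcal A$ and, with $P'':=I-P-P'$, we have $\sigma(A_2P')$ to the right of $\sigma(A_2P'')$. Since $A_2\approx S_2$, there is a unitary $V_0$ in $(I-P)\mathcal M(I-P)$ carrying the trace-$\tau(P)$ spectral projection of $S_2$ on its largest values onto $P'$; after replacing $S$ by $(I_P\oplus V_0)S(I_P\oplus V_0)^*$ we may assume $S_2$ commutes with $P'$, and then $S_2P'\approx A_2P'$, $S_2P''\approx A_2P''$, with $\sigma(S_2P')$ to the right of $\sigma(S_2P'')$, because the relevant spectral scales are the ``top'' and ``bottom'' pieces of the common scale $f_{A_2}=f_{S_2}$. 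Inside the subfactor $(P\oplus P')\mathcal M(P\oplus P')$ the projection $P$ has trace $\tfrac12$, and the pair $A_1\oplus(A_2P')$, $S_1\oplus(S_2P')$ meets the hypotheses of Lemma \ref{SHT2lem}: $f_{S_1}^{P\mathcal M P}>f_{A_1}^{P\mathcal M P}$, $\sigma(A_1)\ge\sigma(A_2P')$, and $A_2P'\approx S_2P'$. Given $\delta>0$, Lemma \ref{SHT2lem} (applied there and extended by $I_{P''}$) yields a unitary $U$, a projection $R_1\le P$ with $\tau(R_1)>\tau(P)(1-2\delta)$ and $E_\mathcal A(R_1USU^*R_1)=A_1R_1$, and a projection $R_2\le P'$ with $\tau(R_2)>\tau(P)(1-2\delta)$ and $f^{R_2\mathcal M R_2}_{R_2USU^*R_2}>f^{R_2\mathcal M R_2}_{A_2R_2}$; moreover $R_2$ is a union of spectral slices of $A_2P'$, so $\sigma(A_2R_2)\subseteq\sigma(A_2P')$ lies to the right of $\sigma(A_2P'')$. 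Hence, inside $(R_2\oplus P'')\mathcal M(R_2\oplus P'')$ the restrictions of $USU^*$ and $A$ again satisfy the hypotheses of the lemma, with $R_2$ (of trace close to $\tau(P)$) now in the role of $P$; the ambient trace has dropped from $1$ to at most $1-\tau(P)+2\delta\tau(P)$, and only slivers ($P-R_1$ and $P'-R_2$) of total trace $<4\delta\tau(P)$ have been cast into $I-Q$. Thus the step is self-reproducing.

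\emph{The transfinite induction.} Run the Zorn's-lemma argument from the proof of Theorem \ref{SHM1}: let $\mathcal X$ be the poset of pairs $(U,Q)$ with $Q\in\mathcal A$, $E_\mathcal A(QUSU^*Q)=AQ$, and such that some sub-projection of $I-Q$ carries a splitting of $USU^*$ and $A$ meeting the hypotheses of the lemma, with reserve block of trace $\le\tau(P)$ and with the discarded part of $I-Q$ controlled, ordered so that $(U_1,Q_1)\le(U_2,Q_2)$ iff $Q_1\le Q_2$ and $U_2-U_1$ is supported on a projection whose trace is bounded by a fixed multiple of $\tau(Q_2-Q_1)$. Exactly as in Theorem \ref{SHM1}, along a chain the localized unitaries converge $*$-strongly to a unitary (via the sesquilinear-form estimate used there), and the diagonal identity and the residual hypotheses pass to the strong limit, so chains have upper bounds; a maximal $(U,Q)$ cannot be enlarged by the step above, which is possible only when the residual corner of $I-Q$ has trace $\le 2\tau(P)$ (otherwise its reserve block would be strictly less than half of it and the step applies), and this forces $\tau(Q)>1-2\tau(P)$.

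\emph{Main obstacle.} The algebraic content is a single application of Lemma \ref{SHT2lem}; the real work is (i) the trace accounting that pins the constant — summing the increments of $Q$, the shrinkage of the reserve, and the discarded $\delta$-slivers over the (transfinitely many) steps, with the $\delta$ at step $n$ chosen so small that $\sum_n\delta_n$ is negligible, and perhaps one last direct move to absorb the leftover reserve — so that one lands on $1-2\tau(P)$ rather than merely on $1-2\tau(P)-\varepsilon$; and (ii) repeating the chain-convergence argument of Theorem \ref{SHM1} in the present, slightly more elaborate, poset, to be sure the limit of the localized unitaries is a unitary and the limit pair lies again in $\mathcal X$.
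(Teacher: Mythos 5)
Your local step is in substance the paper's own: slice off the top of the spectrum of $A_2$ a block whose trace matches the current reserve, align $S_2$ by a unitary so that the blocks correspond, apply Lemma \ref{SHT2lem} inside the corner where the reserve has relative trace $\tfrac12$, and let the output projection $R_2$ (on which the strict spectral-scale domination survives) serve as the reserve for the next round. Where your write-up diverges is in the globalization, and there it has real problems. The Zorn's-lemma/chain-limit machinery imported from Theorem \ref{SHM1} is unnecessary here: each step consumes from $I-P$ a slice of trace at least $\tau(P)\prod_j(1-2\delta_j)$, so only finitely many steps (roughly $1/\tau(P)$) are possible and there are no limit stages at all; and if one does insist on the poset formulation, the membership conditions you impose (strict pointwise inequality of spectral scales on the reserve, equimeasurability of the untouched block) would have to be shown to persist under chain limits, which you do not address and which is not automatic for strict inequalities.

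The more serious issue is that the quantitative conclusion $\tau(Q)>1-2\tau(P)$ --- the whole content of the lemma beyond ``iterate Lemma \ref{SHT2lem}'' --- is never established; you flag it yourself as the main obstacle. As written, your termination/maximality argument yields only $\tau(I-Q)\le(\text{final reserve})+(\text{final remainder})+(\text{slivers})<2\tau(P)+4\tau(P)\sum_i\delta_i$, i.e.\ $\tau(Q)>1-2\tau(P)-\varepsilon$, and no choice of the $\delta_i$ is exhibited that closes the gap, because the available slack is never identified. The paper closes exactly this gap by fixing all parameters before the induction: it takes $k$ with $(k+1)\tau(P)\le 1<(k+2)\tau(P)$, chooses a single $\delta$ with $\tfrac{k(k+1)}{2}\delta<(k+2)\tau(P)-1$, and cuts the slices $P_i$ with traces $\tau(P)-(i-1)\delta$ matched in advance to the shrinking reserve; then $\tau(Q)=\sum_{i=1}^{k}\bigl(\tau(P)-i\delta\bigr)=k\tau(P)-\tfrac{k(k+1)}{2}\delta>1-2\tau(P)$ is immediate. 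Your argument becomes a proof once the transfinite scheme is replaced by this finite induction (or once you quantify the slack $(k+2)\tau(P)-1>0$ before choosing the $\delta_i$); the extra ``last direct move to absorb the leftover reserve'' you allude to is then not needed.
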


\begin{proof}
 We may assume that $\tau(P) \leq \frac{1}{2}$ for otherwise there is nothing to prove. Let $k$ be the natural number such that $(k+1) \tau(P) \leq 1 < (k+2)\tau(P)$; Note that $k > 1$. Now, choose a positive $\delta$ such that $ \dfrac{k(k+1)\delta}{2} < (k+2)\tau(P) - 1$. Next, define the sequence of numbers $\{a_1, \cdots, a_k\}$ using the following prescription: $a_1$ is such that 
 \[\tau(I-P)\tau_{(I-P)\mathcal{M}(I-P)}(\mu_{A_2}^{(I-P)\mathcal{M}(I-P)}((a_1,1))) = \tau(P)\]
  and for $i = 2, \cdots, k$, the number $a_i$ is such that 
  \[\tau(I-P)\tau_{(I-P)\mathcal{M}(I-P)}(\mu_{A_2}^{(I-P)\mathcal{M}(I-P)}((a_i,a_{i-1}))) = \tau(P)-(i-1)\delta.\] 

Now, define the sequence of projections $P_1, \cdots, P_k$ by
\begin{eqnarray}\label{pcal1}
P_1 := \mu_{A_2}^{(I-P)\mathcal{M}(I-P)}((a_1,1))
\end{eqnarray}
 and then for $i = 2, \cdots, k$, define 
\begin{eqnarray}\label{pcal2}
P_i := \mu_{A_2}^{(I-P)\mathcal{M}(I-P)}((a_i,a_{i-1})).
\end{eqnarray}
 We interpret these projections as lying in $\mathcal{M}$. Note that by (\ref{pcal1}) and (\ref{pcal2}), we have,
\[\tau(P_i) = \tau(P)-(i-1)\delta, \quad i = 1, \cdots, k\]
Pick a unitary $V$ such that 
\[V\mu_{S_2}^{(I-P)\mathcal{M}(I-P)}(a_1,1)V^{*} = \mu_{A_2}^{(I-P)\mathcal{M}(I-P)}(a_1,1)\] as well as 
 \[V\mu_{S_2}^{(I-P)\mathcal{M}(I-P)}((a_i,a_{i-1}))V^{*} = \mu_{A_2}^{(I-P)\mathcal{M}(I-P)}((a_i,a_{i-1})), \quad i = 1,\cdots k\]
  and which commutes with $P$, i.e. $V = P \oplus (I-P)V(I-P)$. We then have,
\[ A =  \left( \begin{array}{ccccc}
A_{1} & 0 & 0 & 0 & 0\\
0 & A_{21} & 0 & 0 & 0\\
0 & 0 & \ddots & 0 & 0\\
0 & 0 & 0 & A_{2k} & 0\\
0 & 0 & 0 & 0 & A_{2\, (k+1)} \end{array} \right), \quad  VSV^{*} =  \left( \begin{array}{ccccc}
S_{1} & 0 & 0 & 0 & 0\\
0 & S_{21} & 0 & 0 & 0\\
0 & 0 & \ddots & 0 & 0\\
0 & 0 & 0 & S_{2k} & 0\\
0 & 0 & 0 & 0 & S_{2\,( k+1)} \end{array} \right)\]
where $S_{2i} \approx A_{2i}$ for $i = 1, \cdots, k+1$ and $\sigma(S_{21}) \geq \cdots \geq \sigma(S_{2k}) \geq \sigma(S_{2\, k+1})$. Also, $f^{P\mathcal{M}P}_{S_{1}} - f^{P\mathcal{M}P}_{A_{1}} > 0$.

Applying lemma(\ref{SHT2lem}) to $A_{1} \oplus A_{21}$ and $S_{1} \oplus  S_{21}$ inside $(P\oplus P_1)\mathcal{M}(P\oplus P_1)$, we conclude that we may find a unitary $U_1$ commuting with $I-P-P_1$ and projections $Q_1$ and $R_1$ of trace $\tau(P)-\delta$ in $\mathcal{A}$ with $Q_1 \leq P$ and $R_1 \leq P_1$ such that letting $T_1 = U_1 S U_1^{*}$, we have that  $E_{\mathcal{A}}(Q_1 T_1 Q_1) = AQ_1$ and  further, $f_{R_1 T_1 R_1}^{R_1\mathcal{M}R_1} - f_{A_{21} R_1}^{R_1\mathcal{M}R_1} > 0$.

Inductively, for $i = 1, \cdots, k-1$, do the following: Note that $f_{R_i T_i R_i}^{R_i\mathcal{M}R_i} - f_{A_{2i} R_i}^{R_i\mathcal{M}R_i} > 0$ and apply lemma(\ref{SHT2lem}) together with the remark (\ref{greater}) following it to $A_{2i}R_i \oplus A_{2\, i+1}$ and $T_i R_i\oplus  S_{2\, i+1}$ inside $(R_i \oplus P_{i+1})\mathcal{M}(R_i \oplus P_{i+1})$. The lemma yields a unitary $U_{i+1}$ commuting with $I-R_i-P_{i+1}$ and projections $Q_{i+1}$ and $R_{i+1}$ of trace $\tau(P)-i\delta$ in $\mathcal{A}$ with $Q_{i+1} \leq P_{i}$ and $R_{i+1} \leq P_{i+1}$ such that letting $T_{i+1} = U_{i+1} T_i U_{i+1}^{*}$, we have 
\begin{enumerate}
\item  $E_{\mathcal{A}Q_{i+1}}(Q_{i+1} T_{i+1} Q_{i+1}) = AQ_{i+1}$ 
\item Since $U_{i+1} = (R_i+P_{i+1})U_i(R_i + P_{i+1}) + (I-R_i-P_{i+1})$, we have $(T_{i+1}-T_{i})(I-R_i-P_{i+1}) = 0$ and hence, $E_{\mathcal{A}Q_{j}}(Q_{j} T_{i+1} Q_{j}) = AQ_{j}$ for $j \leq i$ as well.

\item $f_{R_{i+1} T_{i+1} R_{i+1}}^{R_{i+1}\mathcal{M}R_{i+1}} - f_{A_{2\,i+1} R_{i+1}}^{R_{i+1}\mathcal{M}R_{i+1}} > 0$.
\end{enumerate}

Putting it all together, letting $U = U_1U_2 \cdots U_{k}$, we have that 
\[E_{\mathcal{A}Q}(QUSU^{*}Q) = AQ\]
where $Q = Q_1 \oplus Q_2 \cdots \oplus Q_k$. We have that $\tau(Q_i) = \tau(P) - i\delta$ and thus, 
\begin{eqnarray*}
 \tau(Q) &=& \sum_{i=1}^k \tau(P) - i\delta\\
&=& k\tau(P) - \dfrac{k(k+1)}{2} \delta\\
&>& 1 - 2\tau(P)
\end{eqnarray*}

\end{proof}

\begin{lemma}\label{Eqmlem2}
Let $\mathcal{A}$ be a masa in a type $II_1$ factor $\mathcal{M}$ and let $A \in \mathcal{A}$ and $S \in \mathcal{M}$ be two positive operators commuting with a projection $P$ in $\mathcal{A}$, written with respect to the decomposition $I = P \oplus I-P$, 
\[S = \left( \begin{array}{cc}
S_{1} & 0\\
0 & S_2 \end{array} \right),  \quad A = \left( \begin{array}{cc}
A_1 & 0\\
0 & A_2 \end{array} \right)\] 
Assume $f_{S_1}^{P\mathcal{M}P} - f_{A_1}^{P\mathcal{M}P} > 0$ and $\sigma(A_1) \geq \sigma(A_2)$ and $A_2 \prec S_2$ inside $(I-P)\mathcal{M}(I-P)$. Then, there is a projection $Q$ in $\mathcal{A}$ with $\tau(Q) > 1 - 2\tau(P)$ and a unitary $U$ such that 
\[E_{\mathcal{A}}(QUSU^{*}Q) = AQ \]
\end{lemma}

\begin{proof}
Applying theorem (\ref{conj1}) to $A_2$ and $S_2$ inside $(I-P)\mathcal{M}(I-P)$, we see that there is a unitary of the form $V = P \oplus \tilde{V}$ and a projection $P_1$ in $\mathcal{A}$ smaller than $(I-P)$ so that with respect to $I = P \oplus P_1 \oplus (I-P-P_1)$, we have
\[VSV^{*} = \left( \begin{array}{ccc}
S_{1} & 0 & 0\\
0 & S_{3} & \ast\\
0 & \ast & S_{d} \end{array} \right),  \quad A = \left( \begin{array}{ccc}
A_1 & 0 & 0\\
0 & A_{3} & 0\\
0 & 0 & A_d \end{array} \right)\] 
where 
\[S_{3} \approx A_{3}, \quad E_{\mathcal{A}P_1}(S_{d}) = A_{d}, \]

Compressing to $(P\oplus P_{1})\mathcal{M}(P\oplus P_{1})$, we apply lemma (\ref{Eqmlem}) to $(P \oplus P_{1})\mathcal{M}(P \oplus P_{1})$ and $A(P \oplus P_{1})$ to conclude that there is a projection $R$ in $\mathcal{A}$, smaller than $P \oplus P_1$ of trace greater than $\tau(P_1)-\tau(P)$ and a unitary $W$ such that $E_{\mathcal{A}}(RWVSV^{*}W^{*}R) = AR$.  Let $U = VW$; together with the fact that the operator $S_{d}$ has the ''right diagonal'', we conclude that 
\[E_{\mathcal{A}}(QWSW^{*}Q) = AQ\]
where $Q= R \oplus  I - P - P_1$. We see that 
\[\tau(Q) = \tau(R) + \tau(I-P-P_1) > \tau(P_1) - \tau(P) + \tau(I-P-P_1) = 1 - 2\tau(P)\]

\end{proof}

We now turn to the main theorem of the paper, the proof of the conjecture (\ref{conj2}) of Arveson and Kadison in \cite{ArvKad}. We start off with some preliminary remarks. Let $f_A$ and $f_S$ be the spectral scales of $A$ and $S$ respectively. Define \[\mathcal{E}:= \{x \in (0,1) : f_A(x) = f_S(x)\}\]
Choose a unitary that conjugates $ \mu_S(\mathcal{E})$ onto  $\mu_A(\mathcal{E})$. With respect to the decomposition $I = \mu_A(\mathcal{E}) \oplus \mu_A(\mathcal{E}^c) = (I-P) \oplus P$, we may write 
\[A = \left( \begin{array}{cc}
A_1 & 0\\
0 & A_2 \end{array} \right)  \quad USU^{*} =   \left( \begin{array}{cc}
S_1 & 0\\
0 & S_2 \end{array} \right) \]
Then, $A_1 \approx S_1$ and $A_2 \prec S_2$ inside $P \mathcal{M} P$. It is now easy to see that if we can prove the theorem for $A_2$ and $S_2$ inside $P \mathcal{M}P$, the result for $A$ and $S$ inside $\mathcal{M}$ would follow. We may therefore assume that $f_A \neq f_S$ almost everywhere on $[0,1]$. 

Let $F_A$ and $F_S$ be the Ky Fan norm functions. The relation $A \prec S$ gives us that $F_A \leq F_S$ on $[0,1]$. 
Define  \[\mathcal{F}:=\{x \in (0,1) : F_A(x) = F_S(x)\}\]
Since we assume that $f_A \neq f_S$ almost everywhere on $[0,1]$, $\mathcal{F}$ cannot contain any intervals. We may write $F^{c}$ as a union of disjoint intervals $\{I_{\alpha}\}$; Pick a unitary $U$ that conjugates $ \mu_S(I_{\alpha})$ onto  $\mu_A(I_{\alpha})$ for every $\alpha$. Then,
\[A = \sum_{\alpha} A \mu_{A}(I_{\alpha}) \quad \text{and} \quad USU^{*} = \sum_{\alpha} USU^{*} \mu_{A}(I_{\alpha})\] 
where $A \mu_{A}(I_{\alpha}) \prec USU^{*} \mu_{A}(I_{\alpha})$ and further the corresponding Ky Fan norm functions are strictly positive on $(0,1)$ for every $\alpha$. It is routine to see that if we can solve the problem for every $\alpha$, the general theorem follows. Therefore, we may assume, additionally to $f_A \neq f_S$ a.e. on $[0,1]$, that $F_A < F_S$ on $(0,1)$. 

\begin{remark}\label{HonMaj}
We use the following notation:
\begin{eqnarray}
 A \lnsim_{w} S \quad \text{ if } \quad A \prec_{w} S,\quad  F_A < F_S \text{ on } (0,1)
\end{eqnarray}
If we have that $A \lnsim_{w} S$ and also $\tau(A) = \tau(S)$, we say that $A \lnsim S$. 
\end{remark}

\begin{proposition}\label{SH1Part}
Let $A \in \mathcal{A}$ and $S \in \mathcal{M}$ be positive operators with $A \lnsim S$. Then, there is a projection $P$ in $\mathcal{A}$ with $\tau(P) \geq \frac{1}{2}$ and a unitary $U$ in $\mathcal{M}$ such that $E(PUSU^{*}P) = AP$ and $A(I-P) \lnsim (I-P)USU^{*}(I-P)$.  
\end{proposition}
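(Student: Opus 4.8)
\emph{Proof plan.} The plan is to take $P$ to be, up to a negligible correction, a spectral projection of $A$ of trace just above $\tfrac12$: on it the prescribed diagonal will be produced by Theorem~\ref{SHM1}, and the equimeasurable error that Theorem~\ref{SHM1} leaves behind will be absorbed using Lemma~\ref{Eqmlem}. The fuel for that absorption is a thin reservoir on which the spectral scale of a piece of $S$ strictly dominates the corresponding piece of $A$; such a reservoir is available precisely because the hypothesis is the strict relation $A\lnsim S$, i.e. $F_A<F_S$ on $(0,1)$.

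First I would fix the pieces. Since $F_S-F_A$ is continuous, vanishes exactly at the endpoints of $[0,1]$ and is positive on $(0,1)$, the set $\{x\in(0,1): f_A(x)<f_S(x)\}$ has positive measure; as $f_A$ and $f_S$ are monotone (hence continuous off a countable set) this set contains an interval on which, after shrinking to a common continuity point, $\inf f_S>\sup f_A$. Using this and the continuity of $F_A,F_S$ I would pick orthogonal projections in $\mathcal{A}$, all of them spectral projections of $A$ and listed in decreasing spectral order, consisting of a top block, a thin reservoir block of trace as small as we please, a core block of trace slightly above $\tfrac12$, and a bottom block, with the reservoir positioned so that, after conjugating $S$ by a unitary aligning the spectral projections of $S$ with these blocks, the $S$-part of the reservoir has spectral scale strictly dominating its $A$-part. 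Inside the core compressed algebra I would then apply Theorem~\ref{SHM1} to the core block of $A$ and a suitable positive operator majorizing it that is realised as a compression of the conjugated $S$ (its existence is where $F_A\le F_S$ enters); this returns a subprojection $P_0$ of the core lying in $\mathcal{A}$ on which the compressed conjugate of $S$ has diagonal $AP_0$, the two operators being equimeasurable on the rest of the core.

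Next, the reservoir block together with the leftover part of the core satisfies the hypotheses of Lemma~\ref{Eqmlem} — strict domination of spectral scales on the reservoir, equimeasurability on the leftover, and the decreasing spectral order that makes Remark~\ref{greater} available — so Lemma~\ref{Eqmlem} produces a further unitary and a projection of relative trace exceeding $1-2\tau(\text{reservoir})$ on which the diagonal is again $A$. Composing the unitaries and letting $P$ be $P_0$ together with this new projection gives $P\in\mathcal{A}$ with $E(PUSU^{*}P)=AP$ and $\tau(P)\ge\tau(\text{core})-2\tau(\text{reservoir})\ge\tfrac12$, provided the reservoir was chosen thin enough. It then remains to check $A(I-P)\lnsim(I-P)USU^{*}(I-P)$: here $I-P$ is assembled from the top and bottom blocks, the unused part of the reservoir, and a sliver of the core, and comparing Ky Fan functions block by block via parts (2)--(3) of Lemma~\ref{Llemma} the head and tail of the spectrum supply the required \emph{strict} inequality on $(0,1)$, while equality of traces is automatic from $\tau(A)=\tau(S)$.

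The hard part will be the first and last steps taken together: there is only a finite amount of slack $F_S-F_A$, and it must be split so that enough of it is spent as the reservoir for Lemma~\ref{Eqmlem} to output a projection of trace $\ge\tfrac12$, yet enough remains \emph{on the complement} to keep the submajorization there strict with equal trace. Reconciling these two budgets — and arranging the block decomposition so that the spectral-order hypotheses of Lemma~\ref{2matlem}, Remark~\ref{greater}, Lemma~\ref{SHT2lem} and Lemma~\ref{Eqmlem} hold simultaneously — is the crux; once the decomposition is in place the remaining estimates are the routine Ky Fan function manipulations already used in Lemma~\ref{Llemma}.
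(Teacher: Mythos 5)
Your plan is essentially the paper's own proof: the paper likewise cuts $[0,1]$ into a head, a thin reservoir $\mu_A((c-\delta,c))$, a large core $\mu_A((a,b))$ and a tail, aligns spectral projections by a unitary, applies the type $II_1$ Schur--Horn result (Theorem \ref{SHM1}/\ref{conj1}) to the core, absorbs the equimeasurable remainder via Lemma \ref{Eqmlem} fuelled by the reservoir, and checks $A(I-P)\lnsim(I-P)USU^{*}(I-P)$ with Lemma \ref{lem1}. The budget-splitting you flag as the crux is handled there by the explicit choices $a<\tfrac{1}{8}<\tfrac{7}{8}<b$ with $F_S(a)-F_A(a)=F_S(b)-F_A(b)$ (so the core traces match), and $\delta<\tau(Q_2)/3$ together with $F_S(c-\delta)-F_A(c-\delta)>3\delta$, which is exactly the reconciliation you anticipate.
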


\begin{proof}

 By assumption, we have that  $F_S > F_A$ on $(0,1)$. Let $a$ and $b$ be numbers with $a < \frac{1}{8}$ and $b > \frac{7}{8}$, so that  
\begin{eqnarray}\label{calc1}
F_S(x) - F_A(x) > F_S(a) - F_A(a) = F_S(b) - F_A(b), \quad a < x < b.
\end{eqnarray}
This can be done as follows; Recall that the functions $F_S$ and $F_A$ are continuous on $[0,1]$.  Let $\alpha := \operatorname{min}(\{F_S(x)-F_A(x) : x \in [\frac{1}{8},\frac{7}{8}]\})$. Now, let
\[a =\operatorname{sup}(\{x : F_S(x)-F_A(x) = \frac{\alpha}{2}, \, x \leq \frac{1}{8}\}), \quad b = \operatorname{sup}(\{x : F_S(x)-F_A(x) = \frac{\alpha}{2}, \, x \geq \frac{7}{8}\}).\]
The function $F_S-F_A$ is greater than $\frac{\alpha}{2}$ on $[a,\frac{1}{8}]$ and $[\frac{7}{8},b]$ by the choice of $a$ and $b$ and is at least $\alpha$ on $[\frac{1}{8},\frac{7}{8}]$. Assertion (\ref{calc1}) follows. Note further that $a < \frac{1}{8}$ and $b > \frac{7}{8}$. Since $f_A$ and $f_S$ are right continuous

Now, choose a unitary $V_1$ so that
\[V_1 \mu_S((0,a))V_1^{*}= \mu_A((0,a)), \quad  V_1\mu_S((a,b))V_1^{*} = \mu_A((a,b)), \quad  V_1\mu_S((a,b))V_1^{*} = \mu_A((b,1))\]

 With respect to the decomposition $I = Q_1 \oplus Q_2 \oplus Q_3 = \mu_A((0,a)) \oplus \mu_A((a,b)) \oplus \mu_A((b,1))$, we write
\[ A =   \left( \begin{array}{ccc}
A_1 & 0 & 0\\
0 & A_2 & 0\\
0 & 0 & A_3\end{array} \right) \quad V_1SV_1^{*} =   \left( \begin{array}{ccc}
S_1 & 0 & 0\\
0 & S_2 & 0\\
0 & 0 & S_3\end{array} \right)\]

Note that 
\begin{eqnarray}\label{ab}
\tau(Q_1)=a, \quad \tau(Q_2)  = b - a > \dfrac{7}{8} - \dfrac{1}{8} = \dfrac{3}{4}. 
\end{eqnarray}
Now,
\[\tau(Q_2AQ_2) = \tau(S\mu_{A}(a,b)) = \int_a^b f_A(x)dm(x) = F_A(b)-F_A(a) = F_S(b)-F_S(a)=\tau(Q_2SQ_2)\]
It is easy to see that $f_{Q_2AQ_2}^{Q_2\mathcal{M}Q_2}(x) = f_{S}(\tau(Q_1)+\tau(Q_2)x)$ and thus, for $x \in (0,1)$, 
\begin{eqnarray*}
F_{Q_2SQ_2}^{Q_2\mathcal{M}Q_2}(x)-F_{Q_2AQ_2}^{Q_2\mathcal{M}Q_2}(x) &=&\dfrac{\int_{0}^{x} [f_{S}(\tau(Q_1)+\tau(Q_2)y) -f_{A}(\tau(Q_1)+\tau(Q_2)y)]dm(y)}{\tau(Q_2)}\\
&=&\int_{\tau(Q_1)}^{\tau(Q_1)+x\tau(Q_2)} [f_{S}(y) -f_{A}(y)]dm(y)\\
&=&[F_S(\tau(Q_1)+x\tau(Q_2))-F_A(\tau(Q_1)+x\tau(Q_2))]-[F_S(a)-F_A(a)]\\
&>& 0
\end{eqnarray*}
The last inequality is because for every $x \in (0,1)$, we have $\tau(Q_1)+x\tau(Q_2) = a+(b-a)x = a(1-x)+bx \subset (a,b)$ and thence because of (\ref{calc1}).

We therefore have that 
\begin{eqnarray}\label{precst}
A_2 \prec S_2 \text{ inside } \, Q_2 \mathcal{M} Q_2.
\end{eqnarray}
 Similarly, we can prove that
\begin{eqnarray}\label{precst2}
A_1 \oplus A_3 \prec S_1 \oplus S_3 \text{  inside }\, (Q_1 + Q_3)\mathcal{M}(Q_1 + Q_3).
\end{eqnarray}

 Note further that 
\begin{eqnarray}\label{precst3}
\sigma(A_1) \geq \sigma(A_2) \geq \sigma(A_3), \quad \sigma(S_1) \geq \sigma(S_2) \geq \sigma(S_3).
\end{eqnarray}

Noting that $F_S(a)-F_A(a_1)>0$, we may choose a number $c$ in $(0,a)$ such that $f_{S}(c) > f_{A}(c)$. Recall that the spectral scales are right continuous. Thus, we may find an interval $I=[c,d]$ such that $f_{S} > f_{A}$ on $I$.  Now, choose a $\delta > 0$ satisfying $c+\delta < a$ and 
\begin{eqnarray}\label{delcho}
 \delta < \dfrac{\tau(Q_{2})}{3}, \quad f_{S} > f_{A}\,\,\, \text{ on } [c,c+\delta],
\end{eqnarray}
as well as(by passing to a smaller $\delta$ if needed),
\begin{eqnarray}\label{delcho2}
\operatorname{min}_{x \in [c,b]} \{F_{S}(x) - F_{A}(x)\} >  3\delta
\end{eqnarray}

Let us define 
\[Q_{11}:=\mu_{A}((0,c)), \quad Q_{12}:= \mu_{A}((c,c+\delta)),\quad  Q_{13}:=\mu_{A}((c+\delta,a))\]

Now choose a unitary  $W$ that is the identity on $I-Q_1$ such that
\[W\{\mu_{S}((0,c)), \mu_{S}((c,c+\delta)), \mu_{S}((c+\delta,a))\}W^{*}=\{\mu_A((0,c)), \mu_A((c,c+\delta)), \mu_A((c+\delta,a))\}\]
 Now, with respect to $I = Q_{11}\oplus Q_{12}\oplus Q_{13} \oplus Q_{2}  \oplus Q_{3}$, we may write
\[ A =  \left( \begin{array}{ccccc}
A_{11} & 0 & 0 & 0 & 0\\
0 & A_{12} & 0 & 0 & 0\\
0 & 0 & A_{13} & 0 & 0\\
0 & 0 & 0 & A_{2} & 0\\
0 & 0 & 0 & 0 & A_3\end{array} \right), \quad  WSW^{*} =  \left( \begin{array}{ccccc}
S_{11} & 0 & 0 & 0 & 0\\
0 & S_{12} & 0 & 0 & 0\\
0 & 0 & S_{13} & 0 & 0\\
0 & 0 & 0 & S_{2} & 0\\
0 & 0 & 0 & 0 & S_3\end{array} \right)\]
Compress to $(Q_{12} +Q_{2}) \mathcal{M} (Q_{12} + Q_{2})$; let $B = A(Q_{12} + Q_{2}) $, $R = WVSV^{*}W^{*}(Q_{12} + Q_{2})$,
\[A(Q_{12} + Q_{2}) \sim \left( \begin{array}{cc}
A_{12} & 0\\
0 & A_{2} \end{array} \right), \quad R \sim \left( \begin{array}{cc}
S_{12} & 0\\
0 & S_{2} \end{array} \right)\]
Note that we have the following, 
\begin{eqnarray*}
A_{2} \prec S_{2},  \quad f_{S_{12}}^{Q_{12}\mathcal{M}Q_{12}} > f_{A_{12}}^{Q_{12}\mathcal{M}Q_{12}}, \quad \sigma_{Q_{12}\mathcal{M}Q_{12}}(A_{12}) \geq \sigma_{Q_{2}\mathcal{M}Q_{2}}(A_{2})\end{eqnarray*}
The assertions follow from (\ref{precst}), (\ref{precst3}) and (\ref{delcho}) respectively.

Applying lemma(\ref{Eqmlem2}) to $B$ and $R$, we get a projection $P$ in $\mathcal{A}$ with $P \leq Q_{12} \oplus Q_{2}$ and a unitary $W_1$ that is the identity on $I-Q_{12}-Q_{2}$ such that letting $U=W_1WV$, we have,
\[E_{\mathcal{A}}(PUSU^{*}P) = AP \]
and also,
\[\tau(P) > \tau(Q_{12}+Q_{2}) - 2\tau(Q_{12}) = \tau(Q_{2})-\tau(Q_{12})\]
By our choice of $Q_{12}$, $\tau(Q_{12}) = \delta < \dfrac{\tau(Q_2)}{3}$ and hence, 
\[\tau(P) > \dfrac{2}{3}\tau(Q_2) = \dfrac{2}{3}(b-a) > \dfrac{2}{3}(\dfrac{3}{4}) = \dfrac{1}{2}\]
The penultimate inequality follows from (\ref{ab}).

Let  $\tilde{P} = Q_{12}+Q_{2}-P$ and note that $\tau(\tilde{P}) < 2\tau(Q_{12}) = 2\delta$. We have with respect to $I = Q_{11} \oplus P \oplus Q_{13} \oplus \tilde{P}  \oplus Q_3$, 
\[  A =  \left( \begin{array}{ccccc}
A_{11} & 0 & 0 & 0 & 0\\
0 & B & 0 & 0 & 0\\
0 & 0 & A_{13} & 0 & 0\\
0 & 0 & 0 & C & 0\\
0 & 0 & 0 & 0 & A_3\end{array} \right), \quad  USU^{*} =  \left( \begin{array}{ccccc}
S_{11} & 0 & 0 & 0 & 0\\
0 & X & 0 & \ast & 0\\
0 & 0 & S_{13} & 0 & 0\\
0 & \ast & 0& Y & 0\\
0 & 0 & 0 & 0 & S_3\end{array} \right)\]
and where $E_{\mathcal{A}P}(X) = B$. We also have that
\begin{eqnarray}\label{spec}
 \sigma_{\tilde{P}\mathcal{M}\tilde{P}}(Y)\subset \sigma_{P\mathcal{M}P}(S_{12})\cup \sigma_{Q_{13}\mathcal{M}Q_{13}}(S_{13}) \cup \sigma_{\tilde{P}\mathcal{M}\tilde{P}}(S_2)
\end{eqnarray}

The last step in the proof is to show that  
\[A(I-P) \sim \left( \begin{array}{cccc}
A_{11} &  0  & 0 & 0\\
0 & A_{13} & 0 & 0\\
 0 & 0 & C & 0\\
 0 & 0 & 0 & A_{3} \end{array} \right) \lnsim  \left( \begin{array}{cccc}
S_{11} & 0 & 0\\
0 &  S_{13}  & 0 & 0 \\
0 & 0 & Y & 0\\
0 & 0 & 0 & S_3\end{array} \right) \sim  (I-P)USU^{*}(I-P).\] 
The projection decomposition is with respect to $Q_1 \oplus Q_{13} \oplus \tilde{P} \oplus Q_3$. Now, let us write the projection $\tilde{P}+Q_{13}$ as $Q_4$ and the operator $Q_4A$ as $A_4$ and $Q_4USU^{*}Q_4$ as $S_4$, that is, 
\[A_4 = \left( \begin{array}{cc}
A_{13} & 0\\
0 & C \end{array} \right), \quad S_4 = \left( \begin{array}{cc}
S_{13} & 0\\
0 & Y \end{array} \right), \quad \text{ inside }Q_4\mathcal{M}Q_4\]
Note that
\[\sigma_{Q_{11}\mathcal{M}Q_{11}}(A_{11}) \geq \sigma_{Q_{4}\mathcal{M}Q_{4}}(A_4) \geq \sigma_{Q_{3}\mathcal{M}Q_{3}}(A_3), \] as well as by (\ref{spec}),
\[\sigma_{Q_{11}\mathcal{M}Q_{11}}(S_{11}) \geq \sigma_{Q_{4}\mathcal{M}Q_{4}}(S_4) \geq \sigma_{Q_{3}\mathcal{M}Q_{3}}(S_3), \]

The condition (\ref{calc1}) implies that
\[\tau(Q_{13})L_{Q_{13}\mathcal{M}Q_{13}}(A_{13},S_{13}) + \tau((Q_{11}+Q_{12})(S-A)) = \operatorname{inf}_{x \in (c,a)} F_S(x)-F_A(x) > 3\delta\]
and hence,
\begin{eqnarray}\label{13cal}
 \tau(Q_{13})L_{Q_{13}\mathcal{M}Q_{13}}(A_{13},S_{13}) &>& 3\delta - \tau(Q_{11}(S-A))+\tau(Q_{12}(S-A)) \\
\nonumber &\geq& 2\delta - \tau(Q_{11}(S-A))
\end{eqnarray}

We have by statement (2) of (\ref{Llemma}),
\begin{eqnarray}\label{4com}
\tau(Q_{4})L_{Q_4\mathcal{M}Q_4}(A_4,S_4) &\geq& \tau(Q_{13})L_{Q_{13}\mathcal{M}Q_{13}}(A_{13},S_{13})+\tau(\tilde{P})L_{\tilde{P}\mathcal{M}\tilde{P}}(C,Y)\\
\nonumber &\geq& \tau(Q_{13})L_{Q_{13}\mathcal{M}Q_{13}}(A_{13},S_{13})-\tau(\tilde{P})\\
\nonumber &>& \tau(Q_{13})L_{Q_{13}\mathcal{M}Q_{13}}(A_{13},S_{13}) - 2\delta\\
\nonumber &>&  -\tau(Q_{11}(S-A))
\end{eqnarray}

We may write
\[A(I-P) \sim \left( \begin{array}{ccc}
A_{11} &  0  & 0 \\
0 & A_{4} & 0\\
 0 & 0 & A_{3} \end{array} \right), \quad   (I-P)USU^{*}(I-P) \sim \left( \begin{array}{ccc}
S_{11} & 0 & 0\\
0 &  S_4 & 0 \\
0 & 0 & S_3\end{array} \right) .\] 

The calculation (\ref{4com})  implies that
\[\tau(Q_{11}(S-A)) + \tau(Q_{4})L_{Q_4\mathcal{M}Q_4}(A_4,S_4) >0\]

By lemma(\ref{lem1}), we see that
\[A(I-P) \lnsim (I-P)USU^{*}(I-P)\]

\end{proof}

\begin{corollary}\label{cor2}
Let $A \in \mathcal{A}$ and $S \in \mathcal{M}$ be positive operators and suppose we have a projection $P$ in $\mathcal{A}$ and a unitary $U$ in $\mathcal{M}$ such that 
\[E(PUSU^{*}P) = AP, \quad A(I-P) \lnsim (I-P)USU^{*}(I-P).\]
Then, there is a projection $Q$ in $\mathcal{A}$ such that  $Q > P$ with $\tau(I-Q) \leq \dfrac{\tau(I-P)}{2}$ and a unitary $V$ in $\mathcal{M}$ such that 
\[P(V-U) = 0, \quad E(QVSV^{*}Q) = AQ, \quad A(I-Q) \lnsim (I-Q)VSV^{*}(I-Q).\]
 
\end{corollary}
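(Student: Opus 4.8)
The plan is to deduce this exactly as Corollary~\ref{cor1} was deduced from Proposition~\ref{prop1}: restrict everything to the corner $(I-P)\mathcal{M}(I-P)$, apply Proposition~\ref{SH1Part} there, and glue the resulting unitary onto $U$. Recall that $\mathcal{A}(I-P)$ is a masa in the type $II_1$ factor $(I-P)\mathcal{M}(I-P)$ and that the restriction of $E$ to this corner is the canonical trace-preserving conditional expectation onto $\mathcal{A}(I-P)$ (after normalizing the trace). By hypothesis $A(I-P)\lnsim (I-P)USU^{*}(I-P)$ inside $(I-P)\mathcal{M}(I-P)$, so Proposition~\ref{SH1Part} supplies a projection $P_0\in\mathcal{A}(I-P)$ with $\tau_{(I-P)\mathcal{M}(I-P)}(P_0)\geq \frac12$ and a unitary $W\in (I-P)\mathcal{M}(I-P)$ such that, writing $R:=(I-P)USU^{*}(I-P)$,
\[ E(P_0 WRW^{*}P_0)=A(I-P)P_0,\qquad A(I-P)(I-P-P_0)\lnsim (I-P-P_0)WRW^{*}(I-P-P_0), \]
both relations read inside $(I-P)\mathcal{M}(I-P)$.

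I would then set $V:=(P+W)U$ and $Q:=P+P_0$. Since $P$ is the identity of $P\mathcal{M}P$ and $W$ a unitary of $(I-P)\mathcal{M}(I-P)$, the element $P+W$ is a unitary commuting with $P$, so $V$ is a unitary and $P(V-U)=P\bigl(W-(I-P)\bigr)U=0$. Also $Q\in\mathcal{A}$, $Q>P$ because $P_0\neq 0$, and
\[ \tau(I-Q)=\tau(I-P)-\tau(P_0)=\tau(I-P)\bigl(1-\tau_{(I-P)\mathcal{M}(I-P)}(P_0)\bigr)\leq \frac{\tau(I-P)}{2}. \]
For the two remaining relations, write $VSV^{*}=(P+W)(USU^{*})(P+W)^{*}$ and note, using $P\leq Q$, $P_0\leq I-P$ and $W=(I-P)W(I-P)$, that $QVSV^{*}Q$ has $(P,P)$-corner $PUSU^{*}P$, $(P_0,P_0)$-corner $P_0WRW^{*}P_0$, and only $P$--$P_0$ off-diagonal corners; since $E$ lands in the abelian algebra $\mathcal{A}\ni P,P_0$ and $PP_0=0$, those off-diagonal corners are killed by $E$, whence $E(QVSV^{*}Q)=E(PUSU^{*}P)+E(P_0WRW^{*}P_0)=AP+A(I-P)P_0=AQ$. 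Finally $I-Q=(I-P)-P_0$ gives $(I-Q)VSV^{*}(I-Q)=(I-P-P_0)WRW^{*}(I-P-P_0)$ and $A(I-Q)=A(I-P)(I-P-P_0)$, so the second displayed conclusion of Proposition~\ref{SH1Part} is verbatim $A(I-Q)\lnsim (I-Q)VSV^{*}(I-Q)$.

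I do not expect a genuine obstacle here: the only things needing care are the bookkeeping identities — that $E$ localizes to the canonical conditional expectation on the corner, that the $P$--$P_0$ off-diagonal blocks vanish under $E$ because $\mathcal{A}$ is abelian and $PP_0=0$, and the elementary trace estimate for $\tau(I-Q)$. In spirit this corollary is purely a ``restrict to a corner and reassemble'' argument, entirely parallel to the passage from Proposition~\ref{prop1} to Corollary~\ref{cor1}, and the substance of the work is already contained in Proposition~\ref{SH1Part}.
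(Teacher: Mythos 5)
Your argument is correct and is exactly the route the paper intends: it omits the proof of Corollary~\ref{cor2}, stating that it is deduced from Proposition~\ref{SH1Part} just as Corollary~\ref{cor1} is deduced from Proposition~\ref{prop1}, i.e.\ by compressing to $(I-P)\mathcal{M}(I-P)$, applying the proposition there, and gluing via $V=(P\oplus W)U$, $Q=P+P_0$. Your bookkeeping (the corner expectation, the vanishing of the $P$--$P_0$ off-diagonal blocks under $E$, and the trace estimate $\tau(I-Q)\leq\tau(I-P)/2$) matches what the paper's omitted argument would require.
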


 This is proved in the same way that corollary(\ref{cor1}) is deduced from proposition (\ref{prop1}) and we omit the proof, using proposition(\ref{SH1Part}) in place of proposition (\ref{prop1}).

Corollary (\ref{cor2})  will imply the main Schur-Horn theorem. The passage from a partial solution to the full solution of the problem can be done exactly as in the proof of theorem (\ref{conj1}). 

\begin{theorem}\label{conj2}[The Schur-Horn theorem in type $II_1$ factors II]
Let $\mathcal{A}$ be a masa in a type $II_1$ factor $\mathcal{M}$. If $A \in \mathcal{A}$ and $S \in \mathcal{M}$ are positive operators with $A \prec S$. Then, there is an element $T \in \mathcal{O}(S)$ such that \[E(T) = A\]
\end{theorem}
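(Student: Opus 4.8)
The plan is to bootstrap from Proposition~\ref{SH1Part} and Corollary~\ref{cor2} by a simple sequential induction. In contrast to Theorem~\ref{conj1}, no transfinite argument is needed here: Proposition~\ref{SH1Part} already disposes of ``half'' of the problem in one step, and Corollary~\ref{cor2} then halves the trace of the unsolved part at every subsequent step, so the induction terminates after $\omega$ steps.

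The first task is the normalization carried out in the paragraphs preceding Proposition~\ref{SH1Part}, reducing to the hypothesis $A \lnsim S$. Writing $\mathcal{E}$ for the set where $f_A = f_S$ and conjugating $S$ by a unitary carrying $\mu_S(\mathcal{E})$ onto $\mu_A(\mathcal{E})$, one peels off an equimeasurable block $A_1 \approx S_1$; since $A_1$ lies in the masa, we have $E(A_1)=A_1$ and $A_1 \in \mathcal{O}(S_1)$, so nothing is required of that block. A further conjugation using the set $\mathcal{F}$ where $F_A=F_S$ reduces to $F_A < F_S$ on $(0,1)$; together with $\tau(A)=\tau(S)$ (automatic from $A\prec S$) this is exactly $A \lnsim S$. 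Two easy observations keep these conjugations harmless: conjugating $S$ by a fixed unitary leaves $\mathcal{O}(S)$ unchanged, as it only permutes the unitary orbit; and a direct sum of norm-orbit-closures lies inside the norm-orbit-closure of the direct sum, so a blockwise solution is a genuine solution. Thus it suffices to find, assuming $A \lnsim S$, a unitary $U$ with $E(USU^{*})=A$.

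Now I would iterate. Proposition~\ref{SH1Part} gives a partial solution $(U_1,P_1)$ with $\tau(P_1)\ge \tfrac12$, $E(P_1U_1SU_1^{*}P_1)=AP_1$ and $A(I-P_1)\lnsim (I-P_1)U_1SU_1^{*}(I-P_1)$. Given $(U_n,P_n)$, Corollary~\ref{cor2} produces $(U_{n+1},P_{n+1})$ with $P_{n+1}>P_n$, $\tau(I-P_{n+1})\le\tfrac12\tau(I-P_n)$, $P_n(U_{n+1}-U_n)=0$, $E(P_{n+1}U_{n+1}SU_{n+1}^{*}P_{n+1})=AP_{n+1}$, and $A(I-P_{n+1})\lnsim(I-P_{n+1})U_{n+1}SU_{n+1}^{*}(I-P_{n+1})$. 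Hence $\tau(I-P_n)\le 2^{-(n-1)}\to 0$, so $P_n\uparrow I$; and since $U_{n+1}-U_n=(I-P_n)(U_{n+1}-U_n)$, the elementary bounds $\|(I-P)X\|_2,\|X(I-P)\|_2\le\|X\|\,\tau(I-P)^{1/2}$ give $\|U_{n+1}-U_n\|_2\le 2\tau(I-P_n)^{1/2}$, and similarly for the adjoints. These estimates are summable, so $U_n$ and $U_n^{*}$ are $\|\cdot\|_2$-Cauchy; being norm-bounded they converge in SOT to operators $U$ and $U^{*}$, and, exactly as in the proof of Theorem~\ref{SHM1}, $U$ is unitary. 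Put $T:=USU^{*}$, which lies in $\mathcal{O}(S)$ by definition. Since $U_n\to U$ in $\|\cdot\|_2$ with uniform norm control, $U_nSU_n^{*}\to T$ in $\|\cdot\|_2$, so $E(U_nSU_n^{*})\to E(T)$ because $E$ is $\|\cdot\|_2$-contractive; and $\|U_nSU_n^{*}-P_nU_nSU_n^{*}P_n\|_2\le 2\|S\|\,\tau(I-P_n)^{1/2}$ forces $\|E(U_nSU_n^{*})-AP_n\|_2\to 0$, while $AP_n\to A$. Therefore $E(T)=A$, which is the assertion.

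Since the real content lives in Proposition~\ref{SH1Part} and Corollary~\ref{cor2}, the only point left to watch in this argument is the limiting step: making sure the chain of partial solutions assembles into a single \emph{unitary} $U$ and that $E$ genuinely commutes with the limit. This is the same mechanism used, in a more elaborate transfinite form, in Theorem~\ref{SHM1}, and it is lighter here precisely because $\tau(I-P_n)$ decays geometrically rather than merely being controlled.
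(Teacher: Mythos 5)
Your proposal is correct and follows essentially the same route as the paper: reduce to the strict case $A \lnsim S$ via the spectral-scale/Ky Fan normalization preceding Proposition~\ref{SH1Part}, iterate Proposition~\ref{SH1Part} and Corollary~\ref{cor2} to get partial solutions with $\tau(I-P_n)$ halving, and pass to the strong-$*$ limit of the unitaries. Your write-up merely fills in the $\|\cdot\|_2$-Cauchy estimates the paper calls ``routine'' and handles the equimeasurable block a bit more carefully (taking $A_1$ itself, rather than $S_1$, as the corresponding block of $T$), which is a welcome clarification but not a different argument.
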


\begin{proof}
Assume first that $A \lnsim S$. Using proposition (\ref{SH1Part}) and corollary (\ref{cor2}), we may pick a sequence of projections $\{P_n\}$ in $\mathcal{A}$ and a sequence of unitaries $\{U_n\}$ of $\mathcal{M}$ such that  
\[ E(P_nU_sSU_n^{*}P_n) = AP_n, \quad A(I-P_n) \lnsim (I-P_n)U_n SU_n^{*}(I-P_n)\]
as well as
\[P_n U_n = P_n U_{n+1}, \quad n = 1, 2, \cdots \]
We may choose the $P_n$ so that  
\[\tau(P_1) \geq \dfrac{1}{2}, \quad \tau(I-P_{n+1}) \leq \dfrac{\tau(I-P_n)}{2}\]
 yielding that $\tau(P_n) \geq 1-\dfrac{1}{2^n}$. It is now routine to see that the unitaries $U_n$ converge in the strong operator topology to a unitary that we denote $U$ and that we have 
\[E(USU^{*}) = A.\]

For the general case, as in the discussion preceding proposition (\ref{SH1Part}), we can find a unitary $V$ and a projection $Q$ in $\mathcal{A}$ so that with respect to $I = I-Q \oplus Q$,
\[ A =   \left( \begin{array}{cc}
A_1 & 0\\
0 & A_2 \end{array} \right) \quad VSV^{*} =   \left( \begin{array}{cc}
S_1 & 0\\
0 & S_2 \end{array} \right) \]
where $A_1 \approx S_1$ and $A_2 \lnsim S_2$. 
Then, there is a unitary $U$ of the form $I \oplus U_1$  such that $E_{\mathcal{A}Q}(U_1 S U_1^{*}) = AQ$. It is routine to see that the operator $T$ defined by
\[ T :=  \left( \begin{array}{cc}
A_1 & 0\\
0 & U_1S_2U_1^{*} \end{array} \right) \]
is such that  $E(T)=A$ and that $T$ is in $\mathcal{O}(S)$.
\end{proof}

We record one consequence that emerged in the above proof separately.

\begin{theorem}\label{Uthm}
Let $\mathcal{A}$ be a masa in a type $II_1$ factor $\mathcal{M}$. If $A \in \mathcal{A}$ and $S \in \mathcal{M}$ are positive operators with $A \prec S$. Assume further that $ A \lnsim S$, that is, $F_A(x) < F_S(x)$ for all $x \in (0,1)$ Then, there is a unitary $U$ so that   
\[E(USU^{*}) = A\]

\end{theorem}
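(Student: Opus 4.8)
The plan is to recognize that the hypotheses here are exactly the relation $A \lnsim S$ of Remark \ref{HonMaj}, and then to run the iteration already set up for the first case in the proof of Theorem \ref{conj2}. Indeed, $F_A < F_S$ on $(0,1)$ is the strict submajorization condition, and $A \prec S$ gives $\tau(A) = F_A(1) = F_S(1) = \tau(S)$, so together they say $A \lnsim S$. Thus the statement is really the $A \lnsim S$ case of the Arveson--Kadison conjecture, where no passage to the norm closure of the unitary orbit is needed.

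Concretely, I would first apply Proposition \ref{SH1Part} to obtain a projection $P_1 \in \mathcal{A}$ with $\tau(P_1) \ge \tfrac12$ and a unitary $U_1$ with $E(P_1 U_1 S U_1^* P_1) = A P_1$ and $A(I-P_1) \lnsim (I-P_1)U_1 S U_1^*(I-P_1)$. Then I would iterate using Corollary \ref{cor2}: feeding the partial solution $(U_n, P_n)$ into it produces $P_{n+1} > P_n$ in $\mathcal{A}$ with $\tau(I - P_{n+1}) \le \tfrac12\tau(I-P_n)$, and a unitary $U_{n+1}$ with $P_n(U_{n+1} - U_n) = 0$ such that $(U_{n+1}, P_{n+1})$ is again a partial solution of the same ($\lnsim$) type. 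This gives $\tau(P_n) \ge 1 - 2^{-n}$, so $P_n \uparrow I$ strongly and $\tau(I-P_n) \to 0$. The relation $U_{n+1} - U_n = (I-P_n)(U_{n+1}-U_n)$ then forces $\|U_{n+1}-U_n\|_2 \le 2\sqrt{\tau(I-P_n)}$, so $\{U_n\}$ is $\|\cdot\|_2$-Cauchy and converges to some $U \in \mathcal{M}$. Applying normality of $E$ and of the compressions $X \mapsto P_n X P_n$ to $E(P_n U_n S U_n^* P_n) = A P_n$, and using $\tau(I - P_n) \to 0$, gives $E(USU^*) = A$.

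The one point that needs a little care, rather than being purely formal, is checking that the limit $U$ is a genuine unitary and not merely a contraction; this is the hard part. I would handle it as in the proof of Theorem \ref{SHM1}: the $U_n$ are uniformly norm-bounded, $U_n \to U$ and $U_n^* \to U^*$ strongly, and finiteness of $\mathcal{M}$ then forces $U_n^* U_n \to U^* U$ and $U_n U_n^* \to U U^*$ strongly, whence $U^* U = U U^* = I$. Everything of real substance — that each local adjustment preserves the $\lnsim$ relation while halving the trace of the unsolved corner — is already packaged into Proposition \ref{SH1Part} and Corollary \ref{cor2}, so beyond this limiting argument the proof is a short assembly.
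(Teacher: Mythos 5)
Your proposal is correct and is essentially the paper's own argument: Theorem \ref{Uthm} is recorded there precisely as the $A \lnsim S$ case of the proof of Theorem \ref{conj2}, obtained by applying Proposition \ref{SH1Part} once and then iterating Corollary \ref{cor2} so that $\tau(I-P_n) \leq 2^{-n}$, with $P_n(U_{n+1}-U_n)=0$ forcing the $U_n$ to converge to a unitary $U$ satisfying $E(USU^{*})=A$. Your filling in of the limiting step (the $\|\cdot\|_2$-Cauchy estimate and the strong-$*$ argument for unitarity of the limit, as in Theorem \ref{SHM1}) is exactly what the paper treats as routine.
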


This last theorem has a nice consequence ; There is no need to take the norm closure of the unitary orbit to achieve a desired diagonal when the diagonal has finite spectrum. To prove this theorem, we need Choquet's notion of comparison of measures : Given two regular Borel measures $\mu$ and $\nu$ on $\mathbb{R}$, we say that $\mu \prec \nu$ if for every tuple of positive Borel measures $\mu_1, \cdots, \mu_m$ such that 
 $\sum_{i=1}^{m} \mu_i = \mu$, there are positive Borel measures $\nu_1,\cdots,\nu_m$ such that $\sum_{i=1}^{m} \nu_i = \nu$ and such that $\int_{\mathbb{R}}xd\mu_i = \int_{\mathbb{R}}xd\nu_i$ for $i = 1\ \cdots,m$.
 
  Let $A$ and $S$ be two positive operators  in a type $II_1$ factor $\mathcal{M}$, with spectral measures $\mu_A$ and $\mu_S$. We let $\tau(\mu_A)$ denote the scalar measure on $\mathbb{R}$ given by $X \rightarrow \tau(\mu_A(X))$ where $X$ is any Borel set and similarly for $S$. It is a basic fact that the following are equivalent for positive operators $A$ and $S$ in type $II_1$ factors, see \cite{HiCh},
\begin{enumerate}
\item $A \prec S$
\item $\tau(\mu_A) \prec \tau(\mu_S)$. 
 \end{enumerate}
Interpreting statement (2) above operator algebraically, we see that $A \prec S$ is equivalent to saying that for every partition into projections, $P_1 + \cdots + P_k = I$ commuting with $A$ we have a partition into projections $Q_1 + \cdots + Q_k = I$ commuting with $S$ so that
 \[\tau(P_m) = \tau(Q_m)\quad \text{and} \quad \tau(AP_m) = \tau(SQ_m)\quad \text{for} \quad 1 \leq m \leq k\]
\begin{corollary}
Let $\mathcal{M}$ be a type $II_1$ factor, $\mathcal{A}$ a masa in $\mathcal{M}$, $S$ a positive operator in $\mathcal{M}$ and $A$ a positive operator in $\mathcal{A}$ with finite spectrum so that $A \prec S$. Then, there is a unitary $U$ so that $E(USU^{*}) = A$. 
\end{corollary}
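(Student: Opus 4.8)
The plan is to reduce, via the operator-algebraic reformulation of majorization recorded just above (equivalently, Choquet comparison of the trace measures), to the case where on each spectral block of $A$ the target operator is a scalar, and then to invoke Theorem \ref{Uthm} block by block.

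First I would list the distinct eigenvalues $\lambda_1 > \cdots > \lambda_n$ of $A$, with spectral projections $E_1,\dots,E_n \in \mathcal{A}$ and $t_i := \tau(E_i)$. Applying the operator interpretation of $A \prec S$ to the partition $\{E_1,\dots,E_n\}$, which commutes with $A$, produces a partition $\{Q_1,\dots,Q_n\}$ of $I$ into projections commuting with $S$ such that $\tau(Q_i) = t_i$ and $\tau(SQ_i) = \tau(AE_i) = \lambda_i t_i$. Since $\tau(Q_i) = \tau(E_i)$ for each $i$, there is a unitary $V \in \mathcal{M}$ with $VQ_iV^{*} = E_i$ for all $i$ (a sum of partial isometries implementing $Q_i \sim E_i$). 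Then $VSV^{*}$ commutes with every $E_i$, so writing $S_i := E_i (VSV^{*}) E_i \in E_i \mathcal{M} E_i$ we have a positive operator of trace $\lambda_i t_i$.

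The heart of the argument is the following observation inside the type $II_1$ factor $E_i\mathcal{M}E_i$, with its masa $\mathcal{A}E_i$: one has $\lambda_i E_i \prec S_i$, and in fact either $S_i = \lambda_i E_i$ or the strict inequality $F_{\lambda_i E_i}^{E_i\mathcal{M}E_i}(x) < F_{S_i}^{E_i\mathcal{M}E_i}(x)$ holds for all $x \in (0,1)$. Indeed $F_{\lambda_i E_i}^{E_i\mathcal{M}E_i}$ is the chord $x \mapsto \lambda_i x$, while $F_{S_i}^{E_i\mathcal{M}E_i}$ is concave with the same endpoints $0$ and $\lambda_i$, hence lies above that chord; and a concave function agreeing with this chord at even one interior point must coincide with it on all of $[0,1]$, which forces $f_{S_i}$ to be a.e.\ constant equal to $\lambda_i$, i.e.\ $\tau(S_i^k) = \lambda_i^k$ for all $k$, i.e.\ $S_i = \lambda_i E_i$. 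In the strict case Theorem \ref{Uthm}, applied in $E_i\mathcal{M}E_i$ (noting that $\mathcal{A}E_i$ is a masa there and that $E_{\mathcal{A}E_i}$ is the restriction of $E$ to $E_i\mathcal{M}E_i$), yields a unitary $U_i \in E_i\mathcal{M}E_i$ with $E_{\mathcal{A}E_i}(U_i S_i U_i^{*}) = \lambda_i E_i$; in the degenerate case simply take $U_i = E_i$.

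Finally I would set $U := \bigl(\sum_i U_i\bigr)V$. Since $VSV^{*}$ commutes with the $E_i$, the operator $USU^{*} = \sum_i U_i S_i U_i^{*}$ is block-diagonal with respect to $\{E_i\}$, so $E(USU^{*}) = \sum_i E_{\mathcal{A}E_i}(U_i S_i U_i^{*}) = \sum_i \lambda_i E_i = A$, as desired. The bookkeeping (partitions, the unitary $V$, the identification of conditional expectations on compressions) is routine; the one point that needs care, and the reason this gives an honest unitary rather than only an element of $\mathcal{O}(S)$, is the strictness dichotomy on each block that makes Theorem \ref{Uthm} rather than Theorem \ref{conj2} the applicable tool.
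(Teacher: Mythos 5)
Your proof is correct and follows essentially the same route as the paper: the paper also uses the Choquet/operator-algebraic reformulation of $A \prec S$ to produce projections $Q_i$ commuting with $S$ of the right traces, conjugates them onto the spectral projections of $A$, and then handles each corner $E_i\mathcal{M}E_i$ by the scalar-diagonal case, where (exactly as in your dichotomy) either $S_i$ is the scalar $\lambda_i E_i$ or $F_{\lambda_i E_i} < F_{S_i}$ on $(0,1)$ and Theorem \ref{Uthm} applies. Your concavity/chord phrasing of the strictness dichotomy is just a restatement of the paper's observation that $\tfrac{1}{x}\int_0^x f_{S_i}\,dm \geq \tau_i(S_i)$ with equality only for scalars, so there is nothing genuinely different here.
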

\begin{proof}
Let us first assume that $A$ is a scalar, that is, $A = \tau(S)I$. If $S$ was a scalar as well, there is nothing to prove. Let us therefore assume that $S \neq \tau(S)I$. Since $f_S$ is non-increasing, we see that for any $0 < x < 1$, $\dfrac{\int_0^x f_S(t)dm(t)}{x} \geq \int_0^1 f_S(t)dm(t) = \tau(S)$ with equality precisely when $S = \tau(S)I$. Hence, $F_A(x) = x\tau(S) < F_S(x)$ on $(0,1)$. Theorem(\ref{Uthm})gives us a unitary $U$ so that 
\[E(USU^{*}) = \tau(S)I = A\]

Now suppose $A$ has finite spectrum; there are projections $P_1, \cdots, P_n$ summing up to $I$ and numbers $a_1, \cdots, a_n$ so that $A = a_1 P_1 + \cdots + a_n P_n$. By Choquet's comparison of measures, there are projections $Q_1, \cdots, Q_n$ commuting with $S$ and summing up to $I$ so that $\tau(Q_m) = \tau(P_m)$ and $\tau(AP_m) = \tau(SQ_m)$ for $m = 1, \cdots, n$. Choose a unitary $U$ so that $UQ_mU^{*}=P_m$ for $m = 1, \cdots, n$. We see that for every $m$, $USU^{*}$ commutes with $P_m$ and that we have $\sigma_{P_m\mathcal{M}P_m}(AP_m) = \{a_m\}$. By the result for scalar diagonals, we have projection $V_m$ in $P_m \mathcal{M}P_m$ so that $E(V_m (USU^{*}P_m)V_m^{*}) = a_m P_m$. Let $W = (V_1 + \cdots + V_n)U$; It is easy to check that 
\[E(WSW^{*}) = A\]

\end{proof}

\section{The Schur-Horn theorem in type $II_{\infty}$ factors}

The Schur-Horn theorem in type $II_1$ factors allows us to quickly prove an analogous theorem for trace class operators in type $II_{\infty}$ factors. One thing to note is that not all masas in type $II_{\infty}$ factors admit normal conditional expectations. It is a result of Takesaki\cite{Tak} that if all masas in a von Neumann algebra admit normal conditional expectations, then the von Neumann algebra is finite. Masas in type $II_{\infty}$ factors that do admit normal conditional expectations are generated by their finite projections - We will refer to these as atomic masas in analogy to $\mathcal{B}(\mathcal{H})$.

In \cite{ArvKad}, Arveson and Kadison proved a Schur-Horn theorem for trace class operators in $\mathcal{B}(\mathcal{H})$; We prove an exact analogue of their result here. The proof follows from a routine reduction to the $II_1$ factor case, which we accomplish by

\begin{lemma}
Let $\mathcal{A}$ be a atomic masa in a type $II_{\infty}$ factor $\mathcal{M}$ and let $A \in \mathcal{A}$ and $S \in \mathcal{M}$ be positive trace class operators so that $A \prec S$. Then, there is a unitary and a finite projection $P$ in $\mathcal{A}$ so that $USU^{*}$ commutes with $P$ and  
\[AP \prec USU^{*}P  \quad \text \quad  A(I-P) \prec USU^{*}(I-P) \]
\end{lemma}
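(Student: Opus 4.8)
The plan is to argue entirely with spectral scales. Write $f_A,f_S\colon[0,\infty)\to[0,\infty)$ for the non-increasing, integrable spectral scales of the trace class operators $A$ and $S$, and set $F_A(t)=\int_0^t f_A\,dm$, $F_S(t)=\int_0^t f_S\,dm$; then $A\prec S$ says precisely that $F_A\le F_S$ on $[0,\infty)$ with $F_A(\infty)=F_S(\infty)=\tau(A)=\tau(S)$. If $A\approx S$ we may take $P=0$. Otherwise, exactly as in the reductions preceding Proposition~\ref{SH1Part}, I would first conjugate $S$ so that $\mu_S$ of the set $\{f_A=f_S\}$ is carried onto the corresponding spectral projection of $A$: on that summand $A$ and the conjugated $S$ are equimeasurable, and it will be harmless to place that summand inside $I-P$ at the end. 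We are thus reduced to the case $f_A\ne f_S$ a.e. Put $g:=F_S-F_A\ge 0$; since $\tau(A)=\tau(S)$ one has $g(t)=\int_t^\infty(f_A-f_S)\,dm$, $g(0)=0$ and $g(\infty)=0$. If $g(t_0)=0$ for some finite $t_0>0$ with $f_A(t_0)>0$, set $P:=\mu_A([0,t_0])$ and pick $U$ with $U^{*}PU=\mu_S([0,t_0])$; inside the type $II_1$ factor $P\mathcal{M}P$ the scales of $AP$ and $USU^{*}P$ are $f_A|_{[0,t_0]}$ and $f_S|_{[0,t_0]}$, so $AP\prec USU^{*}P$ by $F_A\le F_S$ and $F_A(t_0)=F_S(t_0)$, while inside $(I-P)\mathcal{M}(I-P)$ the scales are $f_A(t_0+\,\cdot\,)$ and $f_S(t_0+\,\cdot\,)$ and the required inequalities reduce to $g(t_0+y)\ge g(t_0)=0$ and $g(\infty)=0$. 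So it remains to treat the case $F_A<F_S$ on all of $(0,\infty)$.

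In that case $g$ is continuous, strictly positive on $(0,\infty)$, and tends to $0$ at infinity, so I would fix a large $t_1$ with $f_A(t_1)>0$ and $g(t_1)$ as small as desired (and, passing to a local maximum of $g$ if necessary, with $f_S(t_1)\le f_A(t_1)$), and set $P:=\mu_A([0,t_1])$, so that $\tau(AP)=F_A(t_1)=F_S(t_1)-g(t_1)$ lies just below $F_S(t_1)$. The construction is to conjugate $S$ onto a corner equal to the top-$t_1$ spectral part of $S$ with a short window near the cut ``slid to the right'' by just enough to shed the surplus $g(t_1)$: choose $\delta_1>0$ and put $Q:=\mu_S([0,t_1-\delta_1])\oplus\mu_S([t_1,t_1+\delta_1])$, where $\delta_1$ is chosen --- this is possible by continuity and monotonicity, since $\int_{t_1-\delta}^{t_1}f_S-\int_{t_1}^{t_1+\delta}f_S$ increases continuously from $0$ with $\delta$ --- so that $\int_{t_1-\delta_1}^{t_1}f_S-\int_{t_1}^{t_1+\delta_1}f_S=g(t_1)$; then $\tau(SQ)=F_A(t_1)=\tau(AP)$, and $\delta_1$ is small whenever $g(t_1)$ is. Take a unitary $U$ with $U^{*}PU=Q$; then $USU^{*}$ commutes with $P$ and $\tau(USU^{*}P)=\tau(AP)$.

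It then remains to verify the two majorizations, and here the complement is the crux. For $AP\prec USU^{*}P$ in $P\mathcal{M}P$: the scale of $USU^{*}P$ equals $f_S$ on $[0,t_1-\delta_1]$ followed by $f_S|_{[t_1,t_1+\delta_1]}$, so its running integral equals $F_S$ on $[0,t_1-\delta_1]$ and equals $F_A(t_1)$ at $t_1$, and on $(t_1-\delta_1,t_1)$ its deficit relative to $F_A$ is nonincreasing (there the $S$-side integrand is $\le f_S(t_1)\le f_A(t_1)$), hence that deficit stays $\le 0$; with the trace equality this gives $AP\prec USU^{*}P$. For $A(I-P)\prec USU^{*}(I-P)$ in $(I-P)\mathcal{M}(I-P)$: the trace equality is automatic, and the scale of $USU^{*}(I-P)$ is $f_S(t_1-\delta_1+\,\cdot\,)$ on the short interval $[0,\delta_1]$ and $f_S(t_1+\,\cdot\,)$ afterwards, to be compared with $f_A(t_1+\,\cdot\,)$; on $[\delta_1,\infty)$ the running integral of $USU^{*}(I-P)$ computes, using the definition of $Q$, to $F_S(t_1+y)-F_A(t_1)$, so the inequality there collapses to $F_S\ge F_A$, while on $[0,\delta_1]$ one needs $\int_{t_1-\delta_1}^{t_1-\delta_1+y}f_S\ge\int_{t_1}^{t_1+y}f_A$, whose difference at $y=\delta_1$ equals $g(t_1+\delta_1)\ge0$. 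This last short-interval estimate is the main obstacle: because the tail of $A$ is strictly heavier than that of $S$ at every finite level ($\int_t^\infty f_A>\int_t^\infty f_S$ for all $t<\infty$), $USU^{*}(I-P)$ can never be made equimeasurable with $A(I-P)$, so the complement must be a genuine majorization, and the decreasing rearrangement of $S$ over the fractured corner $I-Q$ has to be controlled near the cut; the quantitative handle is exactly that taking $t_1$ large forces $g(t_1)$, hence $\delta_1$, to be small, so that this rearrangement differs from $f_S(t_1+\,\cdot\,)$ only on a short stretch of small mass, on which $F_S-F_A$ dominates the discrepancy. Everything else --- the equimeasurable peeling, the passage from the fractured corner back to the statement of the lemma, and the elementary monotonicity and continuity facts about $f_A,f_S$ used above --- is routine, paralleling the type $II_1$ arguments already in the paper.
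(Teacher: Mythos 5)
Your strategy (cut at a finite touching point of $F_A$ and $F_S$ when one exists; otherwise cut at a finite $t_1$, put $P=\mu_A([0,t_1])$, and pair it with the trace--matched chunk $Q=\mu_S([0,t_1-\delta_1])\oplus\mu_S([t_1,t_1+\delta_1])$) is sensible, and the parts you verify are correct: the corner estimate on $P$ does follow from $f_S(t_1)\le f_A(t_1)$, and for $r\ge\delta_1$ the complement inequality collapses to $F_A\le F_S$ exactly as you compute. But the step you yourself call ``the main obstacle'', namely $\int_{t_1-\delta_1}^{t_1-\delta_1+r}f_S\,dm\ \ge\ \int_{t_1}^{t_1+r}f_A\,dm$ for $0<r<\delta_1$, is not proved, and the quantitative handle you offer for it fails on both counts. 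First, smallness of $g(t_1):=F_S(t_1)-F_A(t_1)$ does not force $\delta_1$ to be small: for $f_A(t)=e^{-t}$, $f_S(t)=2e^{-2t}$ (a legitimate trace class pair with $F_A<F_S$ on $(0,\infty)$ and equal traces), the defining equation gives $4e^{-2t_1}\sinh^2\delta_1=e^{-t_1}-e^{-2t_1}$, so $\delta_1\sim t_1/2\to\infty$ while $g(t_1)\to0$. Second, even with $\delta_1$ small the inequality can genuinely fail at a $t_1$ satisfying every condition you impose. Writing the needed quantity as $g(t_1+r)-g(t_1)+B(r)$ with $B(r)=\int_{t_1-\delta_1}^{t_1-\delta_1+r}f_S-\int_{t_1}^{t_1+r}f_S\ \ge 0$, take $f_S\equiv s$ constant on $[t_1-\delta_1,\,t_1+r_0]$ (so $B(r_0)=0$, and the equation fixing $\delta_1$ is met by letting $f_S$ drop below $s$ only on $(t_1+r_0,t_1+\delta_1]$), and let $f_A>s$ on $(t_1,t_1+r_0)$ with $\int_{t_1}^{t_1+r_0}(f_A-s)\,dm=g(t_1)/2$. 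Then $f_S(t_1)=s\le f_A(t_1)$, $g$ stays strictly positive, $t_1$ can be placed as far out and $g(t_1)$ made as small as you like, and these local data extend to globally non-increasing, integrable scales with $F_A<F_S$ on $(0,\infty)$ and equal traces; yet at $r=r_0$ the required inequality reads $sr_0\ge sr_0+g(t_1)/2$, which is false. So the complement majorization is simply not a consequence of the choices your argument allows, and the proof as written does not close.

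The repair has to come from locating the cut where the two scales cross, not where $g$ is small: your pointwise comparisons for the $P$--corner and for the short stretch in the complement hold simultaneously precisely when $f_A(t_1^-)\ge f_S(t_1)$ and $f_S(t_1^-)\ge f_A(t_1)$, i.e.\ when neither scale strictly dominates the other across $t_1$ (for instance $t_1$ the essential supremum of $\{f_S>f_A\}$ when that set is bounded), and at such a point no smallness of $g(t_1)$ or of $\delta_1$ is needed at all. You would still owe an argument that such a crossing point exists in the strict case $F_A<F_S$ on $(0,\infty)$ (it does, but it requires a short argument since $\{f_S>f_A\}$ may be unbounded), and that $\delta_1\le t_1$ can be arranged. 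Note that the paper omits the proof of this lemma, referring only to the reductions used for the type $II_1$ theorems; the strict case $F_A<F_S$ on all of $(0,\infty)$, which has no finite touching point and is exactly where your argument is incomplete, is the entire content of the statement.
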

\begin{proof}
 The proof is identical to the first part of the proof of theorem(\ref{conj1}) and we omit it. 
\end{proof}

The lemma yields a straighforward corollary
\begin{corollary}\label{corti}
 Let $\mathcal{A}$ be an atomic masa in a type $II_{\infty}$ factor $\mathcal{M}$ and let $A \in \mathcal{A}$ and $S \in \mathcal{M}$ be positive trace class operators so that $A \prec S$. Then, there is a unitary $U$ and a countable set of orthogonal finite projections $\{P_{n}\}$ in $\mathcal{A}$ summing up to $I$ so that $USU^{*}$ commutes with each projection $P_{n}$ and 
\[AP_{n}  \prec USU^{*}P_{n} \quad \forall n\]
\end{corollary}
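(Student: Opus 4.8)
The plan is to iterate the Lemma, at each stage cutting $\mathcal{M}$ down by the complement of the finite projections already produced.

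Apply the Lemma to $(A,S)$: this gives a unitary $U_1$ and a finite $P_1\in\mathcal{A}$ with $U_1SU_1^*$ commuting with $P_1$, $AP_1\prec U_1SU_1^*P_1$, and $A(I-P_1)\prec U_1SU_1^*(I-P_1)$. Now $I-P_1$ is infinite, so $(I-P_1)\mathcal{M}(I-P_1)$ is again a $\sigma$-finite type $II_\infty$ factor, the finite projections of $\mathcal{A}$ under $I-P_1$ generate the atomic masa $(I-P_1)\mathcal{A}(I-P_1)$ of it, and $A(I-P_1)$, $U_1SU_1^*(I-P_1)$ are positive trace class there with $A(I-P_1)\prec U_1SU_1^*(I-P_1)$. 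Apply the Lemma inside this corner to get a unitary $W_2$ of $(I-P_1)\mathcal{M}(I-P_1)$ and a finite $P_2\le I-P_1$ in $\mathcal{A}$ with the analogous relations, and put $U_2:=(P_1\oplus W_2)U_1$. Since $U_1SU_1^*$ commutes with $P_1$, this conjugation touches only the $(I-P_1)$-corner; so, writing $Q_n:=P_1+\cdots+P_n$ and $T_n:=U_nSU_n^*$, we get $T_2$ commuting with $P_1,P_2$, with $T_2P_1=T_1P_1$, $AP_i\prec T_2P_i$ ($i=1,2$), and $A(I-Q_2)\prec T_2(I-Q_2)$. Iterating produces orthogonal finite $\{P_n\}\subset\mathcal{A}$ and unitaries $U_n$ with $T_n$ commuting with $P_1,\dots,P_n$, $T_nP_i=T_{n-1}P_i$ for $i<n$, $AP_i\prec T_nP_i$ for $i\le n$, and $A(I-Q_n)\prec T_n(I-Q_n)$.

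To finish I would let $n\to\infty$. Because only the $(I-Q_{n-1})$-corner changes at stage $n$, $T_n-T_{n-1}$ equals the difference of the positive operator $T_{n-1}(I-Q_{n-1})$ and its $W_n$-conjugate, so $\|T_n-T_{n-1}\|_1\le 2\,\tau\big(T_{n-1}(I-Q_{n-1})\big)=2\big(\tau(S)-\tau(AQ_{n-1})\big)$, using $\tau(T_{n-1}P_i)=\tau(AP_i)$ (trace equality in majorization) and $\tau(S)=\tau(A)$. Provided $\tau(AQ_n)\uparrow\tau(A)$, the $T_n$ are $\|\cdot\|_1$-Cauchy; let $T$ be the limit. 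Then $T\in\mathcal{O}(S)$, $T$ commutes with each $P_n$, and $TP_n=T_nP_n$, so $AP_n\prec TP_n$. Moreover $\tau(T^k)=\lim_n\tau(T_n^k)=\tau(S^k)$ for every $k$ (the $T_n$ are equimeasurable with $S$ and uniformly bounded by $\|S\|$), so $T$ is equimeasurable with $S$, and one may then write $T=USU^*$ for a unitary $U$; the pair $(U,\{P_n\})$ is as required.

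The main obstacle is the requirement $\tau(A(I-Q_n))\to 0$: the Lemma as stated puts no lower bound on $\tau(AP)$, so the successive $P_n$ need not exhaust the mass of $A$. I would fix this by reentering the proof of the Lemma (which, via Theorem \ref{SHM1}, mirrors the first part of the proof of Theorem \ref{conj1}) and arranging that the projection it outputs dominates the finite spectral projection $\mu_{A(I-Q_{n-1})}\big([\varepsilon_n,\infty)\big)$ — finite because $A$ is trace class — for an $\varepsilon_n$ small enough that $\int_{(0,\varepsilon_n)}t\,d(\tau\!\circ\!\mu_{A(I-Q_{n-1})})(t)<2^{-n}$; then $\tau(A(I-Q_n))<2^{-n}$ and geometric decay follows. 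A secondary point needing care is the last identification $T=USU^*$: if one does not wish to invoke unitary conjugacy of equimeasurable trace-class positive operators in $\mathcal{M}$, the corollary still holds with the weaker conclusion $T\in\mathcal{O}(S)$ in place of $USU^*$, which is all the subsequent type $II_\infty$ Schur-Horn argument needs.
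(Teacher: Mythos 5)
Your corner-by-corner iteration of the preceding lemma is exactly the ``routine induction'' the paper has in mind, and your bookkeeping up to the trace-norm limit is sound: since only the $(I-Q_{n-1})$-corner is disturbed at stage $n$, one gets $\|T_m-T_n\|_1\leq 2\tau(A(I-Q_n))$, and the limit $T$ lies in the $\|\cdot\|_1$-closure of the unitary orbit, commutes with every $P_n$, and satisfies $AP_n\prec TP_n$. You are also right to flag that exhaustion of the mass is not free: the lemma as stated gives no lower bound on $\tau(AP)$, and your proposed cure (force the projection to dominate $\mu_{A(I-Q_{n-1})}([\varepsilon_n,\infty))$) is the natural one, but it is a genuine strengthening of a lemma whose own proof the paper omits; the splitting of the spectral distribution of $S$ that makes both $AP\prec USU^{*}P$ (with trace equality) and $A(I-P)\prec USU^{*}(I-P)$ hold must be re-derived with this larger $P$, so this step still has to be written out rather than asserted.

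The genuine gap is the final step. The inference ``$T$ equimeasurable with $S$, hence $T=USU^{*}$ for some unitary $U$'' is false: the paper itself invokes Popa's example \cite{PopInj} that equimeasurable positive operators need not be unitarily equivalent, and compressing such a pair under a finite projection and extending by $0$ gives trace-class counterexamples in the type $II_{\infty}$ setting. Nor can one instead take $U=\lim_n U_n$: the differences $U_n-U_{n-1}$ are supported only under $I-Q_{n-1}$, a projection of infinite trace, so the convergence mechanism of theorem (\ref{SHM1}) is unavailable and the $U_n$ need not converge in any useful topology. Your fallback conclusion ($T$ in the trace-norm closed orbit rather than $T=USU^{*}$) does suffice for theorem (\ref{SHTIN}), but it proves a statement weaker than the corollary. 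A clean repair is to run your induction purely at the level of spectral scales: produce orthogonal finite projections $E_n$ commuting with $S$ and orthogonal finite projections $P_n$ in $\mathcal{A}$, exhausting the supports of $S$ and $A$ respectively, with $\tau(AP_n)=\tau(SE_n)$ and with the Ky Fan functions of $SE_n$ dominating those of $AP_n$; only at the very end choose a single unitary $U$ carrying each $E_n$ into $P_n$ (possible since the relevant complements are infinite and $\mathcal{M}$ is $\sigma$-finite). Because majorization depends only on distributions, this yields $USU^{*}$ commuting with each $P_n$ and $AP_n\prec USU^{*}P_n$ directly, with no limits of operators or unitaries at all.
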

\begin{proof}
 This a routine induction argument and we omit it. 
\end{proof}

Recall that for trace class operators in type $II_{\infty}$ factors, we have defined $\mathcal{O}(\mathcal{S})$ as the closure of the unitary orbit in the trace norm, see (\ref{TCO}). It is wasy to see that for a positive operator inside a type $II_1$ factor, the closures in the operator norm and the trace norm coincide(with the set of operators equimeasurable to the given one). The Schur Horn theorem for trace class operators in type $II_{\infty}$ factors is as follows
\begin{theorem}\label{SHTIN}
  Let $\mathcal{A}$ be an atomic masa in a type $II_{\infty}$ factor $\mathcal{M}$ and let $A \in \mathcal{A}$ and $S \in \mathcal{M}$ be positive trace class operators so that $A \prec S$. Then, there is an operator $T \in \mathcal{O}(S)$ so that 
\[E(T) = S\]
where $E$ is the canonical $\tau$ preserving conditional expectation onto $\mathcal{A}$.
\end{theorem}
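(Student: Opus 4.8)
The plan is to solve the problem inside each finite corner with the type $II_1$ theorem and then reassemble the pieces in the trace norm. First I would apply Corollary \ref{corti} and, after replacing $S$ by a unitary conjugate (which changes neither $\mathcal{O}(S)$ nor the relation $A\prec S$), assume that $S$ commutes with a sequence $\{P_n\}$ of pairwise orthogonal finite projections in $\mathcal{A}$ with $AP_n\prec SP_n$ for every $n$; by the same maximality argument used for Theorem \ref{SHM1}, together with the $\sigma$-finiteness of $\mathcal{M}$ (which forces a maximal such family to be countable), one may in addition assume $\sum_n P_n=I$, so that $A=\sum_n AP_n$ and $S=\sum_n SP_n$, both convergent in $\|\cdot\|_1$. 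For each $n$ the corner $P_n\mathcal{M}P_n$ is a type $II_1$ factor, $\mathcal{A}P_n$ is a masa in it, and the restriction of $E$ to $P_n\mathcal{M}P_n$ is the canonical trace-preserving normal conditional expectation onto $\mathcal{A}P_n$ (by uniqueness, together with $E(P_nXP_n)=P_nE(X)P_n$ for $P_n\in\mathcal{A}$). Hence Theorem \ref{conj2}, applied inside $P_n\mathcal{M}P_n$ to $AP_n\prec SP_n$, yields a positive operator $T_n$, lying in the operator-norm closure of $\{V(SP_n)V^{*}:V\in\mathcal{U}(P_n\mathcal{M}P_n)\}$, with $E(T_n)=AP_n$.

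Then I would set $T:=\sum_n T_n$. This is positive and trace class, since $\|T\|_1=\sum_n\|T_n\|_1=\sum_n\|SP_n\|_1=\|S\|_1<\infty$, the trace norm being unitarily invariant and $\|\cdot\|$-continuous on each finite corner. Since $E$ is normal, $E(T)=\sum_n E(T_n)=\sum_n AP_n=A$, which is the desired diagonal identity (note that the statement should read $E(T)=A$). It remains to check $T\in\mathcal{O}(S)$, and here taking the $\|\cdot\|_1$-closure of the unitary orbit rather than the operator-norm closure is indispensable. For each $k\ge 1$ and each $n$, choose a unitary $V_{n,k}\in\mathcal{U}(P_n\mathcal{M}P_n)$ with $\|V_{n,k}(SP_n)V_{n,k}^{*}-T_n\|<2^{-n}/(k\,\tau(P_n))$, which is possible by the previous paragraph; then $\|V_{n,k}(SP_n)V_{n,k}^{*}-T_n\|_1\le\tau(P_n)\,\|V_{n,k}(SP_n)V_{n,k}^{*}-T_n\|<2^{-n}/k$. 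Put $W_k:=\sum_n V_{n,k}$, which is a unitary of $\mathcal{M}$. Because $S=\sum_n SP_n$ and $V_{n,k}$ is supported in the $n$th corner, $W_kSW_k^{*}=\sum_n V_{n,k}(SP_n)V_{n,k}^{*}$, whence $\|W_kSW_k^{*}-T\|_1\le\sum_n 2^{-n}/k=1/k\to 0$. Thus $T=\lim_k W_kSW_k^{*}\in\mathcal{O}(S)$, finishing the argument.

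The main obstacle is precisely this reassembly step. Each block solution $T_n$ is only a limit of a unitary orbit inside its own corner, there are infinitely many blocks, and the ``local'' unitaries in the distinct corners must be combined into a single global unitary of $\mathcal{M}$; without summable control on the errors one cannot interchange the $n$-sum with the $k$-limit. It is the hypothesis that $S$ (hence $A$) is trace class that rescues the argument, through $\sum_n\|SP_n\|_1=\|S\|_1<\infty$, which makes the tail of the reassembled sum uniformly small in $k$ and lets the geometrically chosen errors $2^{-n}/k$ drive $\|W_kSW_k^{*}-T\|_1$ to zero along a single sequence $W_k$. A secondary technical point is the reduction to $\sum_n P_n=I$; if one prefers not to strengthen Corollary \ref{corti}, the same exhaustion can be run transfinitely and $\sigma$-finiteness invoked to collapse it to a countable family summing to the identity.
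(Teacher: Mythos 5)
Your argument is essentially the paper's own proof: Corollary \ref{corti} to cut $S$ (up to a unitary) into finite corners $P_n\mathcal{M}P_n$ with $AP_n\prec SP_n$, Theorem \ref{conj2} in each corner to get $T_n$ with $E(T_n)=AP_n$, then $T=\sum_n T_n$ and a single block-diagonal unitary built from corner unitaries with geometrically decaying errors to show $T$ lies in the $\|\cdot\|_1$-closure of the unitary orbit. Your two refinements --- inserting the factor $\tau(P_n)$ when passing from operator norm to trace norm in each corner, and noting that the conclusion should read $E(T)=A$ --- are both correct and in fact tighten estimates the paper states more loosely.
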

\begin{proof}
 Corollary(\ref{corti}) yields us a unitary $U$ and a countable set of orthogonal projections, $\{P_{n}\}$ so that $USU^{*} = \sum P_{n} USU^{*}$ and so that $AP_n \prec USU^{*}P_{n}$. Applying theorem(\ref{conj2}) to each of the $II_1$ factors $P_{n} \mathcal{M} P_{n}$ yields us a set of operators $T_{n} \in \mathcal{O}(P_{n} USU^{*}) \in P_{n} \mathcal{M}P_{n}$ such that $E(T_{n}) = A_{n}$. Now, let $T = \sum_{n} T_{n}$ and fix an $\epsilon > 0$.

  Since $T_n$ belongs to $\mathcal{O}(P_{n} USU^{*}) \in P_{n}$, for each $n$, we can find a unitary $V_n$ in $P_{n} \mathcal{M}P_{n}$ so that 
\[||T_n - V_n  (USU^{*} P_n) V_n^{*}||_1 \leq  ||T_n - V_n (USU^{*} P_n) V_n^{*}|| < \frac{\epsilon}{2^n}.\]
 Letting $V = \sum_{n} V_{n}$, we see that $||T - VUSU^{*}V^{*}||_1 < \epsilon$. Thus, $T$ belongs to $\mathcal{O}(\mathcal{S})$ and we are done. 
\end{proof}

 Another problem in this context is that of characterizing the images of operators, for instance projections, under the conditional expectation onto an atomic masa. In the case of $\bh$, there are subtle index type obstructions that pop up\cite{KPNAS1}. The work of Kadison was recently extended from projections to hermitians with finite spectrum by Bownik and Jasper in \cite{Jas3P}, \cite{BowJas1} and \cite{BowJas2}. The complete characterisation that they obtain, while pleasing, is extremely subtle. In the type $II_{\infty}$ factor case, however, the situation is completely transparent. We first show that any reasonable ``diagonal'' can be lifted to a projection. 

\begin{theorem}
 Let $\mathcal{A}$ be an atomic masa in a type $II_{\infty}$ factor $\mathcal{M}$ and let $A \in \mathcal{A}$ be a positive contraction. Then, there is a projection $P$ in $\mathcal{M}$ so that $E(P) = A$. 
\end{theorem}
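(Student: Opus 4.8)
The plan is to reduce to the type $II_1$ case by cutting $\mathcal{A}$ by a countable family of finite projections summing to the identity, and then to apply Theorem \ref{conj2} (in its carpenter form) on each finite corner. First, I would use that $\mathcal{M}$ is $\sigma$-finite and that $\mathcal{A}$ is generated by its finite projections to choose a maximal family $\{E_n\}_{n\ge 1}$ of pairwise orthogonal nonzero finite projections in $\mathcal{A}$: by $\sigma$-finiteness this family is countable, and maximality together with the generation hypothesis forces $\sum_n E_n = I$ (if $I-\sum_n E_n$ were nonzero it would dominate a nonzero finite projection of $\mathcal{A}$, contradicting maximality). Then $A=\sum_n AE_n$, where each $AE_n$ is a positive contraction lying in the masa $\mathcal{A}E_n$ of the $II_1$ factor $\mathcal{M}_n:=E_n\mathcal{M}E_n$. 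Since $E_n\in\mathcal{A}$, the restriction of $E$ to $\mathcal{M}_n$ maps into $\mathcal{A}E_n$ and preserves the trace, so by uniqueness it is the canonical trace-preserving conditional expectation of $\mathcal{M}_n$ onto $\mathcal{A}E_n$.

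Next, for each $n$ set $t_n:=\tau_{\mathcal{M}_n}(AE_n)\in[0,1]$ and pick a projection $Q_n\in\mathcal{M}_n$ with $\tau_{\mathcal{M}_n}(Q_n)=t_n$. Then $f_{Q_n}=\mathbf{1}_{[0,t_n)}$, so $F_{Q_n}(r)=\min(r,t_n)$, while $0\le f_{AE_n}\le 1$ gives $F_{AE_n}(r)\le r$ and $F_{AE_n}(r)\le F_{AE_n}(1)=t_n$, hence $F_{AE_n}(r)\le F_{Q_n}(r)$ for all $r$ with equality at $r=1$; thus $AE_n\prec Q_n$ inside $\mathcal{M}_n$. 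Applying Theorem \ref{conj2} in $\mathcal{M}_n$ to the masa $\mathcal{A}E_n$ and the pair $AE_n\prec Q_n$ would then produce an operator $T_n\in\mathcal{O}(Q_n)$ with $E(T_n)=AE_n$. Because $Q_n$ is a projection, every element of its unitary orbit is a projection, and a norm limit of projections is again a projection (if $\|P_k-T\|\to 0$ with each $P_k$ a projection then $T=T^*$ and $\|T^2-T\|\le\|T^2-P_k^2\|+\|P_k-T\|\to 0$); so $P_n:=T_n$ is a projection in $\mathcal{M}_n$.

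Finally, set $P:=\sum_n P_n$. This is a projection in $\mathcal{M}$, being an orthogonal sum of projections supported in the mutually orthogonal corners $\mathcal{M}_n$, and since $E$ is normal,
\[
E(P)=\sum_n E(P_n)=\sum_n AE_n = A .
\]
The only substantive input is Theorem \ref{conj2}; there is no serious obstacle here, the points that require a little care being the construction of the decomposition $I=\sum_n E_n$ inside $\mathcal{A}$ and the identification of $E|_{\mathcal{M}_n}$ with the canonical expectation onto $\mathcal{A}E_n$, both of which are routine.
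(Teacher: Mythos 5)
Your proposal is correct and follows essentially the same route as the paper: cut the identity by an orthogonal family of finite projections in $\mathcal{A}$, solve the carpenter problem in each type $II_1$ corner $E_n\mathcal{M}E_n$ via Theorem \ref{conj2} (comparing $AE_n$ with a projection of the same trace), and assemble $P=\sum_n P_n$ using normality of $E$. The extra details you supply (the majorization $AE_n \prec Q_n$, that elements of $\mathcal{O}(Q_n)$ are projections) are exactly the routine steps the paper leaves implicit.
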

\begin{proof}
Write $A = \sum_{\alpha} AQ_{\alpha}$ where $Q_{\alpha}$ are a family of orthgonal finite projections in $\mathcal{A}$. Then, $Q_{\alpha} \mathcal{M}Q_{\alpha}$ is a type $II_1$ factor and we may find a projection $P_{\alpha}$ in $Q_{\alpha} \mathcal{M} Q_{\alpha}$ so that $E(P_{\alpha}) = AQ_{\alpha}$. Then, letting $P = \sum P_{\alpha}$, we have that $E(P) = A$. 
\end{proof}

We now turn things around and ask for a characterization of all possible diagonals of a given projection as well as that of positive operators in general. We use the convention that if a positive operator is not trace class, then it's trace is $\infty$. Argerami and Massey in a recent paper\cite{ArgMas2I} proved approximate theorems in this context, which I am able to improve. First, the result for projections. 
 
\begin{theorem}
Let $P$ be a projection in a type $II_{\infty}$ factor $\mathcal{M}$ and let $A \in \mathcal{A}$ be a positive contraction where $\mathcal{A}$ is an atomic masa. Then, there is a unitary $U$ such that $E(UPU^{*}) = A$ iff $\tau(P) = \tau(A)$ and $\tau(I-P) = \tau(I-A)$. 
\end{theorem}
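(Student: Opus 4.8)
The plan is to deduce the theorem from the immediately preceding result (every positive contraction in an atomic masa is the diagonal of some projection) together with the comparison theory of projections in a $\sigma$-finite semifinite factor.

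\emph{Necessity.} Suppose $U$ is a unitary with $E(UPU^{*}) = A$. Since $E$ is the $\tau$-preserving normal conditional expectation, $\tau\circ E = \tau$ on $\mathcal{M}^{+}$; this identity, standard for elements of finite trace, extends to all positive elements since $E$ is normal and $\tau$ is semifinite (approximate from below by increasing nets of finite-trace elements). Applying it to the projection $UPU^{*}$ gives $\tau(A) = \tau(UPU^{*}) = \tau(P)$. As $E$ is unital, $E(I - UPU^{*}) = I - A$, and applying the same identity to $I - UPU^{*} = U(I-P)U^{*}$ yields $\tau(I-A) = \tau(I-P)$.

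\emph{Sufficiency.} Assume $\tau(P) = \tau(A)$ and $\tau(I-P) = \tau(I-A)$. By the preceding theorem there is a projection $T \in \mathcal{M}$ with $E(T) = A$. Applying the necessity computation with $U = I$ gives $\tau(T) = \tau(A) = \tau(P)$ and $\tau(I-T) = \tau(I-A) = \tau(I-P)$. In a $\sigma$-finite semifinite factor the trace, with values in $[0,\infty]$, is a complete invariant for Murray--von Neumann equivalence of projections, so $T \sim P$ and $I-T \sim I-P$. The standard fact that two projections in a factor whose complements are also equivalent are unitarily equivalent then provides a unitary $V$ with $VTV^{*} = P$; equivalently $E(V^{*}PV) = A$, so $U := V^{*}$ is the desired unitary.

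The argument is mostly bookkeeping once the preceding theorem is available, and there is no substantial obstacle. The two points needing a little care are: (i) that $\tau\circ E = \tau$ persists for positive elements of infinite trace, which uses normality of $E$ and semifiniteness of $\tau$; and (ii) the invocation of comparison theory to pass from "$T\sim P$ and $I-T\sim I-P$" to unitary equivalence, in particular in the cases where $\tau(P)$ or $\tau(I-P)$ is infinite — both are standard in a $\sigma$-finite type $II_{\infty}$ factor. One could avoid citing the general unitary-equivalence fact by instead writing $I$ as a countable orthogonal sum of finite projections of $\mathcal{A}$ as in the proof of Theorem~\ref{SHTIN}, constructing $T$ block by block inside the resulting type $II_{1}$ factors while matching traces directly; but the comparison-theoretic route is cleaner.
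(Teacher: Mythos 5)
Your argument is correct, but it is a genuinely different route from the one in the paper. The paper splits into cases: when $\tau(P)$ or $\tau(I-P)$ is finite it invokes the trace-class Schur--Horn theorem (Theorem \ref{SHTIN}), and in the doubly infinite case it decomposes $P$ and $I-P$ into countable families of finite pieces matched in trace against finite projections $R_{\alpha}$ of $\mathcal{A}$, conjugates them into the corners $R_{\alpha}\mathcal{M}R_{\alpha}$, and applies the type $II_1$ theorem in each corner before summing the resulting unitaries. You instead take the immediately preceding carpenter-type theorem as a black box to produce a projection $T$ with $E(T)=A$, and then use comparison theory: $\tau\circ E=\tau$ forces $\tau(T)=\tau(P)$ and $\tau(I-T)=\tau(I-P)$, and in a $\sigma$-finite semifinite factor equal traces (including the value $\infty$, via proper infiniteness in a countably decomposable factor) give $T\sim P$ and $I-T\sim I-P$, hence unitary equivalence. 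What your route buys is brevity, no case split, and an exact unitary produced directly -- note that the paper's finite-trace case cites Theorem \ref{SHTIN}, which only yields an element of the trace-norm closure of the orbit, so your argument is actually tighter at that point; what it costs is the explicit appeal to the classification of projections by trace, which requires the $\sigma$-finiteness that the paper assumes as a standing hypothesis in Section 2 (without it the statement itself becomes problematic), and the care you correctly take that $\tau\circ E=\tau$ persists for positive elements of infinite trace. Both of your flagged points are standard and handled adequately, so there is no gap.
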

\begin{proof}
If either $\tau(P)$ or $\tau(I-P)$ is finite, the theorem follows from theorem(\ref{SHTIN}). For the other case, pick an orthogonal family of finite projections $\{R_{\alpha}\}$ in $\mathcal{A}$ summing up to the identity. Decompose $P = \sum_{\alpha} P_{\alpha}$ and $I-P = \sum Q_{\alpha}$ so that $P_{\alpha}$ and $Q_{\alpha}$ are finite projections for every $\alpha$, and such that $\tau(P_{\alpha}) = \tau(AR_{\alpha})$ and $\tau(P_{\alpha}) + \tau(Q_{\alpha}) = \tau(R_{\alpha})$. Pick a unitary $U$ that conjugates $P_{\alpha} + Q_{\alpha}$ onto $R_{\alpha}$ for every $\alpha$ and by theorem(\ref{conj2}), pick unitaries $V_{\alpha}$ in $R_{\alpha} \mathcal{M} R_{\alpha}$ so  that $E(V_{\alpha} U (P_{\alpha} + Q_{\alpha}) U^{*} V_{\alpha}) = AR_{\alpha}$ for every $\alpha$. 

Then, if we let $V = \sum_{\alpha} V_{\alpha}$, we have that 
\[E(VUSU^{*}V^{*}) = A\]
\end{proof}

I now extend the above analysis to general positive operators. Let $S \in \mathcal{M}$ and $A \in \mathcal{A}$ be positive operators. For there to exist a $T$ in $\mathcal{O}(\mathcal{S})$ such that $E(T) = A$, it is necessary that $A \prec S$(see (\ref{Maj2I}) for the definition of majorization between general positive operators in type $II_{\infty}$ factors). However, this is not enough. For example, let $A$ be a projection such that both $A$ and $I-A$ have infinite trace. Let $\{P_r\}$ be a sequence of trace $1$ projections indexed by the rationals in $\mathbb{Q} \cap (0,1)$ summing upto $I$ and let $S$ be the operator $S = \sum_{r \in \mathbb{Q} \cap (0,1)} r P_r$. Then, for both $A$ and $S$, the upper and lower spectral scales are the constant functions $1$ and $0$ respectively. It is easy to see that if there is a positive operator $T$ such that $E(T) = A$, then $T$ must equal $A$. However, $A$ is not in $\mathcal{O}(\mathcal{S})$. 

Let $\mathcal{F}(\mathcal{M})$ be the ideal of $\tau$ finite rank operators, $\mathcal{F}(\mathcal{M}) = \{x \in \mathcal{M} : \tau(x^{*}) < \infty\}$ and let $\mathcal{K}(\mathcal{M}) = \overline{\mathcal{F}(\mathcal{M})}^{||\cdot||}$ be the norm closed two sided ideal of $\tau$ compact operators\cite{Fack}. Let $\mathcal{C}(\mathcal{M})$ be the generalized Calkin algebra $\mathcal{M}/\mathcal{K}(\mathcal{M})$ and let $\sigma_e(S)$ and $\sigma_e(A)$ be the essential spectra of $S$ and $A$, namely the spectra when projected down into $\mathcal{C}(\mathcal{M})$. The majorization relation $A \prec S$ will force $\sigma_e(A) \subset  \operatorname{conv}(\sigma_e(S))$. The above example shows that we need additional constraints on the essential point spectra of $A$ and $S$. We have the following theorem, whose proof is not too hard - It involves a standard cut and paste argument and a use of theorem(\ref{conj2}) and we omit it. 

\begin{theorem}
Let $S $ be a positive operator in a type $II_{\infty}$ factor $\mathcal{M}$ and let $A \in \mathcal{A}$ be a positive operator where $\mathcal{A}$ is an atomic masa. Then, there is a $T$ in $\mathcal{O}(\mathcal{S})$ such that $E(T) = A$ iff 
\begin{enumerate}
 \item We have that $A \prec S$. And further,
 \item If $||\sigma_e(A)|| = ||\sigma_e(S)||$ and if $||\sigma_e(A)||$ belongs to the essential point spectrum of $A$, then it belongs to the essential point spectrum of $S$ as well. And,
 \item If $\alpha_e(A) = \alpha_e(S)$ and if $\alpha_e(A)$ belongs to the essential point spectrum of $A$, then it belongs to the essential point spectrum of $S$ as well.
\end{enumerate}
\end{theorem}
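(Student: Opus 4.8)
The plan is to prove necessity directly and to obtain sufficiency by a cut-and-paste reduction to the type $II_1$ Schur--Horn theorem (Theorem~\ref{conj2}) together with its trace-class $II_\infty$ consequence (Theorem~\ref{SHTIN}). For necessity, first note that since $\mathcal{O}(S)$ is a closure of the unitary orbit of $S$ and the maps $S\mapsto U_S$, $S\mapsto L_S$ are $1$-Lipschitz for the operator norm while the normalization clause in the definition of majorization passes to the limit, every $T\in\mathcal{O}(S)$ is equimeasurable with $S$; in particular $\sigma_e(T)=\sigma_e(S)$ and $T\prec S$. Combining this with the type $II_\infty$ analogue of the Arveson--Kadison inequality $E(T)\prec T$ gives $A=E(T)\prec S$, which is (1). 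For (2), suppose $\beta:=\|\sigma_e(A)\|=\|\sigma_e(S)\|$ and that $Q:=\mu_A(\{\beta\})$ is $\tau$-infinite. Since $Q\in\mathcal{A}$ we have $E(QTQ)=\beta Q$; the argument then is that, because $\|\sigma_e(T)\|=\beta$, the positive operator $QTQ$ must agree with $\beta Q$ modulo a $\tau$-compact lying in the kernel of $E$, and as $Q$ is $\tau$-infinite this pins $\beta$ into the essential point spectrum of $T$, hence of $S$. Condition (3) is the mirror image, obtained by applying the same reasoning to $\|S\|I-A\prec\|S\|I-S$.

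For sufficiency, assume (1)--(3). As in the reductions preceding Proposition~\ref{SH1Part}, I would first conjugate $S$ by a unitary so that $\mu_S$ and $\mu_A$ agree on the set where the spectral scales of $A$ and $S$ coincide and split off that common part, reducing to the case where the majorization has genuine slack on every spectral interval. Next I would cut the spectrum of $S$ into a bottom range near $\alpha_e(S)$, a top range near $\|\sigma_e(S)\|$, and a middle range, using spectral projections of $A$ of matching trace. On the top range the essential spectrum of $S$ accumulates only at $\|\sigma_e(S)\|$, so the restriction of $S$ there is either $\tau$-trace-class or a $\tau$-compact perturbation of $\|\sigma_e(S)\|$ times a $\tau$-infinite projection; conditions (2) and (3) are precisely what allow the corresponding pieces of $A$ to be matched to these — e.g.\ when $\|\sigma_e(A)\|=\|\sigma_e(S)\|$ is an essential eigenvalue of $A$, a $\tau$-infinite chunk of the $\|\sigma_e(S)\|$-eigenspace of $S$ must be reserved for it, which (2) permits, and symmetrically for (3) at the bottom — while the $\tau$-trace-class subcases are handled directly by Theorem~\ref{SHTIN}.

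On the middle range I would repeat the argument of Corollary~\ref{corti}: use Choquet comparison of the scalar spectral measures $\tau(\mu_A)\prec\tau(\mu_S)$, valid in the $\sigma$-finite type $II_\infty$ setting, to produce a unitary $U$ and a countable orthogonal family $\{P_n\}\subset\mathcal{A}$ of $\tau$-finite projections summing to the identity of the middle corner, with $USU^{*}$ commuting with each $P_n$ and $AP_n\prec USU^{*}P_n$ inside the type $II_1$ factor $P_n\mathcal{M}P_n$. Applying Theorem~\ref{conj2} in each $P_n\mathcal{M}P_n$ yields $T_n\in\mathcal{O}(USU^{*}P_n)$ with $E(T_n)=AP_n$; adding the $T_n$ together with the operators produced on the boundary ranges gives an operator $T$ with $E(T)=A$, and the usual diagonal approximation (choosing, for each $n$, a unitary $V_n$ on the $n$-th corner carrying $USU^{*}P_n$ to within $\varepsilon2^{-n}$ of $T_n$ in operator, hence trace, norm) shows $T\in\mathcal{O}(S)$.

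The step I expect to be the main obstacle is the boundary analysis: partitioning $\sigma(A)$ and $\sigma(S)$ correctly near the four critical values $\alpha(S)\le\alpha_e(S)\le\|\sigma_e(S)\|\le\|S\|$ so that each extreme piece of $A$ is majorized by a matching piece of $S$, with the $\tau$-infinite pieces aligned in the only way conditions (1)--(3) permit, and keeping the bookkeeping of spectral scales consistent across these cuts. The reductions, the Choquet/cut-and-paste on the generic middle part, and the final reassembly are all routine once Theorems~\ref{conj2} and~\ref{SHTIN} are available.
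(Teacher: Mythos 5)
The paper does not actually contain a proof of this theorem: it is stated with the remark that the proof ``involves a standard cut and paste argument and a use of theorem (\ref{conj2})'' and is omitted. Your outline follows exactly the strategy that remark gestures at (split off the part where the spectral scales agree, cut into $\tau$-finite blocks, apply Theorem \ref{conj2} corner-by-corner as in Corollary \ref{corti} and Theorem \ref{SHTIN}, reassemble), so there is no written argument to compare against; the question is whether your sketch supplies the missing details, and as written it does not.

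The main gap is in the necessity of (2) and (3). From $E(QTQ)=\beta Q$ with $Q=\mu_A(\{\beta\})$ infinite and $\beta=\|\sigma_e(A)\|=\|\sigma_e(S)\|$ you assert that $QTQ$ agrees with $\beta Q$ modulo a $\tau$-compact and that this ``pins $\beta$ into the essential point spectrum of $T$, hence of $S$''. Neither step is justified. When $\|S\|>\beta$ the operator $(T-\beta)_+$ need not be $\tau$-trace-class, so even the ``modulo compact'' claim requires an argument; and a compact perturbation of $\beta Q$ need not have $\beta$ as an eigenvalue at all. More seriously, the final inference ``of $T$, hence of $S$'' is invalid: membership of $T$ in the operator-norm closure of the unitary orbit of $S$ does not preserve point spectrum. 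By the Weyl--von Neumann--Berg phenomenon, transported into a $II_\infty$ factor (take $S=S_0\otimes 1$ and $T=T_0\otimes 1$ in $\mathcal{B}(\mathcal{H})\,\bar\otimes\,\mathcal{R}$ with $T_0=\beta I\oplus S_0$ approximately unitarily equivalent to the normal operator $S_0$), one can have $T\in\mathcal{O}(S)$ carrying an infinite-trace eigenprojection at a point $\beta\in\sigma_e(S)$ while $\mu_S(\{\beta\})$ is finite or zero. So the route ``diagonal forces an essential eigenvalue of $T$, and $T\in\mathcal{O}(S)$ transfers it to $S$'' cannot work; in the paper's own motivating example the argument succeeds only because $\beta=\|S\|$, which forces $QTQ=\beta Q$ exactly and then the full spectrum (not the point spectrum) obstructs norm approximation. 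This is precisely the delicate point the hypotheses (2)--(3) are meant to capture, and your sketch does not engage with it; indeed the example above suggests that any correct proof must pin down very carefully what ``essential point spectrum'' is to mean and how it interacts with the norm closure.

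On the sufficiency side you explicitly defer the boundary analysis near $\alpha_e(S)$ and $\|\sigma_e(S)\|$, but that is exactly where (2) and (3) are used and where the cut-and-paste stops being routine: an infinite-trace eigenprojection of $A$ at the critical value must be matched against a piece of $S$ whose spectrum may only accumulate there, which cannot be handled corner-by-corner by Theorem \ref{conj2} in $\tau$-finite corners alone. The middle-range Choquet/blockwise argument and the final reassembly are fine in outline, but several supporting facts are also asserted without proof (that $E(T)\prec T$ holds for the Neumann-type majorization of (\ref{Maj2I}), including its trace-normalization clause, and the semifinite Choquet comparison). As it stands, the proposal is a reasonable plan that matches the paper's one-line description, but it is not a proof: the necessity argument is broken at its key step, and the sufficiency argument is missing exactly the step you yourself identify as the obstacle.
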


\section{Discussion}
It is routine to extend the Schur-Horn theorem to general finite von Neumann algebras. Let $\mathcal{M}$ be a type $II_1$ von Neumann algebra and let $\mathcal{A}$ be a masa in $\mathcal{M}$. Instead of working with a tracial state, we must now work with the center valued trace $\tau$. Majorization is defined analogously to the case of type $II_1$ factors. The Schur-Horn theorem in this case is
\begin{theorem}
Let $\mathcal{A}$ be a masa in a type $II_1$ von Neumann algebra $\mathcal{M}$. If $A \in \mathcal{A}$ and $S \in \mathcal{M}$ are positive operators with $A \prec S$. Then, there is an element $T \in \mathcal{O}(S)$ such that \[E(T) = A.\]
Alternately, we have that
\[E(\mathcal{O}(S)) = \{A \in \mathcal{A} \mid A \prec S\}\]
\end{theorem}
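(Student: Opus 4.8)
The plan is to reduce this to the type $II_1$ factor case, Theorem \ref{conj2}, by a direct integral argument, the only genuinely new ingredient being measurable selection. The inclusion $E(\mathcal{O}(S)) \subseteq \{A \in \mathcal{A} \mid A \prec S\}$ is the easy half. If $T = \lim_n U_n S U_n^{*}$ in norm, then $\tau(T^k) = \lim_n \tau(U_n S^k U_n^{*}) = \tau(S^k)$ for every $k$, where $\tau$ is now the center valued trace; hence $T$ and $S$ have the same $Z(\mathcal{M})$-valued distribution and, in particular, the same center valued Ky Fan norm functions. Combined with the center valued Jensen inequality $E(T) \prec T$ for the trace preserving conditional expectation (this is \cite{HiaiMa} read over the center, or \cite{ArvKad} applied fiberwise below), this gives $E(T) \prec T$, and therefore $E(T) \prec S$. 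So all the work is in the reverse inclusion: realizing a prescribed $A$ with $A \prec S$ as $E(T)$ for some $T \in \mathcal{O}(S)$.

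Fix $A \in \mathcal{A}$ with $A \prec S$, and assume first that $\mathcal{M}$ has separable predual. Decompose $\mathcal{M} = \int_X^{\oplus} \mathcal{M}_x \, d\nu(x)$ over the spectrum $X$ of $Z(\mathcal{M})$, with $\mathcal{M}_x$ a type $II_1$ factor for $\nu$-almost every $x$. Since every masa contains the center, $\mathcal{A}$ is decomposable, $\mathcal{A} = \int_X^{\oplus}\mathcal{A}_x\, d\nu(x)$ with each $\mathcal{A}_x$ a masa in $\mathcal{M}_x$; the trace preserving conditional expectation decomposes as $E = \int_X^{\oplus} E_x \, d\nu(x)$ with $E_x \colon \mathcal{M}_x \to \mathcal{A}_x$; and the center valued trace is carried by the measurable field of tracial states $\tau_x$ on $\mathcal{M}_x$. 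Writing $A = \int^{\oplus} A_x$ and $S = \int^{\oplus} S_x$, the functions $F_{A_x}, F_{S_x}$ are measurable in $x$, and $A \prec S$ is equivalent to $A_x \prec S_x$ for $\nu$-almost every $x$.

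Next I would run the iteration underlying the proof of Theorem \ref{conj2} --- Proposition \ref{SH1Part} followed by repeated application of Corollary \ref{cor2} --- fiberwise and simultaneously in $x$. At each step the relevant existence statement holds for a.e. $x$, so a standard measurable selection theorem, applied to the appropriate Borel subsets of the standard Borel unitary groups and projection lattices of the fibers, produces measurable fields $x \mapsto U_n^x$ of unitaries and $x \mapsto P_n^x$ of projections in $\mathcal{A}_x$ with $E_x(P_n^x U_n^x S_x (U_n^x)^{*} P_n^x) = A_x P_n^x$, with $P_n^x U_n^x = P_n^x U_{n+1}^x$, and with $\tau_x(P_1^x) \geq \frac{1}{2}$ and $\tau_x(I - P_{n+1}^x) \leq \frac{1}{2}\tau_x(I - P_n^x)$. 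As in the proof of Theorem \ref{conj2}, one first treats the case $F_A < F_S$ on $(0,1)$, the general case $A \prec S$ reducing to it fiberwise by splitting off, inside $\mathcal{A}$, the spectral part where $A$ and $S$ are equimeasurable --- there one simply keeps that piece of $T$ equal to $A$ itself, which lies in the corresponding orbit --- and the parts indexed by the complementary intervals of $\{F_A = F_S\}$. Assembling the fields yields an increasing sequence of projections $P_n \in \mathcal{A}$ with $\tau(I - P_n) \leq 2^{-n} I$, so that $P_n \to I$ strongly since $\tau$ is faithful and normal, together with unitaries $U_n \in \mathcal{M}$ satisfying $P_n U_n = P_n U_{n+1}$, so that $U_n$ converges strongly to a unitary $U$ exactly as in the proof of Theorem \ref{conj2}. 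Then $T := U S U^{*} \in \mathcal{O}(S)$ and $E(T) = \lim_{\mathrm{SOT}} E(P_n U_n S U_n^{*} P_n) = \lim_{\mathrm{SOT}} A P_n = A$; the ``alternately'' formulation is then immediate from the two inclusions.

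The main obstacle is the measurability bookkeeping: one must verify that the unitaries and projections produced at each stage of the Theorem \ref{conj2} iteration can be chosen to depend measurably on $x$ --- equivalently, that the sets of fiberwise partial solutions are Borel in the relevant sense --- so that the selection theorem applies and the pieces glue to honest elements of $\mathcal{M}$ and $\mathcal{A}$. A secondary point is dispensing with the separability hypothesis; this is handled either by the usual reduction to separably generated pieces or, more robustly, by observing that the entire development leading to Theorem \ref{conj2} goes through \emph{verbatim} for an arbitrary finite von Neumann algebra once $\tau$ is read as the center valued trace and the scalar spectral scale is replaced by the $Z(\mathcal{M})$-valued spectral distribution, the only substantive change being that scalars become positive central elements throughout. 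Neither difficulty is serious, which is precisely why the extension is routine.
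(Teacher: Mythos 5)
Your proposal follows essentially the same route the paper itself indicates: the paper omits the argument, stating only that the theorem is proved ``exactly as in the factor case'' via the direct integral decomposition of $\mathcal{M}$ into type $II_1$ factors and an argument analogous to the proof of Theorem \ref{conj2}, which is precisely your fiberwise iteration (or, equivalently, your alternative of rereading the whole development with the center valued trace). Your additional attention to the measurable selection bookkeeping and to the easy inclusion $E(\mathcal{O}(S)) \subseteq \{A \in \mathcal{A} \mid A \prec S\}$ fills in details the paper leaves implicit, so the attempt is consistent with the paper's intended proof.
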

This can be proved exactly as in the factor case by first getting a local version and then using induction. The proof is a standard application of the direct integral decomposition of $\mathcal{M}$ into type $II_1$ factors and an argument analogous to the proof of theorem(\ref{conj2}) and we omit it.

The situation when it comes to type $III$ factors is far simpler than that for semifinite factors. One point to be noted is that the norm  and SOT closures of the unitary orbits of a hermitian operator in this case are different, unlike the type $II_1$ case. For instance, the norm closure of the unitary orbit of a non-trivial projection is the set of all non-trivial projections, while the SOT closure contains in addition, the identity projection $I$ and the zero projection $0$. We will reserve the term $\mathcal{O}(S)$ for the norm closure of the unitary orbit. The proof of the following is again a simple adaptation of the proof of theorem (\ref{conj2}) and I omit it.

\begin{theorem}
 Let $\mathcal{A}$ be a masa in a type $III$ factor $\mathcal{M}$ that admits a normal conditional expectation. Let $A \in \mathcal{A}$ and $S \in \mathcal{M}$ be positive operators. Then the following are equivalent
\begin{enumerate}
 \item There is an operator $T \in \mathcal{O}(S)$ so that $E(T) = S$.
 \item The following spectral conditions are satisfied
\begin{enumerate}
\item $\sigma(A) \subset \operatorname{conv}(\sigma(S))$.
\item If $||S||$ is in the point spectrum of $A$, then it is also in the point spectrum of $S$. Similarly for $\alpha(\sigma(A))$. 
\end{enumerate}
\end{enumerate}
\end{theorem}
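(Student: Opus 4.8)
The plan is to transcribe the proof of Theorem \ref{conj2}, exploiting the fact that in a type $III$ factor majorization degenerates to a purely spectral condition. Since every non-zero projection in a type $III$ factor $\mathcal{M}$ is Murray--von Neumann equivalent to $I$, discretising $S$ and any self-adjoint $T$ with $\sigma(T)=\sigma(S)$ to operators with a common finite spectrum and matching their spectral projections by a single unitary shows (as the paper already notes for projections) that $\mathcal{O}(S)=\{T=T^{*}\in\mathcal{M}:\sigma(T)=\sigma(S)\}$. So statement (1) is the assertion that some positive $T$ with $\sigma(T)=\sigma(S)$ satisfies $E(T)=A$, and throughout the argument the spectral scales, the Ky Fan functions and the quantity $L(A,S)$ of Sections 2--5 are replaced by the coarse bookkeeping of which subintervals of $\operatorname{conv}(\sigma(S))$ support spectrum.

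For (1)$\Rightarrow$(2), part (2)(a) is immediate from positivity and unitality of $E$: $\sigma(A)=\sigma(E(T))\subseteq\operatorname{conv}(\sigma(T))=\operatorname{conv}(\sigma(S))$. For the endpoint conditions, suppose $\|S\|\in\sigma_{p}(A)$ and set $P:=E_{A}(\{\|S\|\})\neq 0$, a projection in $\mathcal{A}$; then $\|T\|=\|S\|$ forces $\|S\|I-T\geq 0$, the bimodule identity gives $E\big(P(\|S\|I-T)P\big)=\|S\|P-PAP=0$, and faithfulness of the canonical normal conditional expectation onto a masa forces $P(\|S\|I-T)P=0$, hence $TP=\|S\|P$, i.e. $\|S\|\in\sigma_{p}(T)$, which passes to $S$ since $T\in\mathcal{O}(S)$. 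The condition at $\alpha(\sigma(A))$ is the same argument applied to $T-\alpha(S)I\geq 0$, and carries content only when $\alpha(A)=\alpha(S)$.

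For (2)$\Rightarrow$(1), I would first split off the endpoints: condition (2)(b) is exactly what is needed to conjugate $E_{A}(\{\|S\|\})$ (respectively $E_{A}(\{\alpha(S)\})$, when $\alpha(A)=\alpha(S)$) onto a subprojection of the corresponding spectral projection of $S$ and set those corners aside, after which the spectra of $A$ and of the remaining part of $S$ both lie in the open interval $(\alpha(S),\|S\|)$. On that corner one runs the type $III$ analogue of the local step: Lemma \ref{2matlem}, Proposition \ref{prop1} and Proposition \ref{SH1Part} go through once the trace is deleted --- the only input is the ordering $\sigma(S_{1})\geq\sigma(A_{1})\geq\sigma(S_{2})$ and the contraction $H$ is manufactured exactly as in Lemma \ref{2matlem} --- and one then assembles a maximal partial solution by the Zorn's lemma argument of Theorem \ref{SHM1}, replacing the $\tau$-size control on the witnessing projections by convergence of the approximating unitaries in $L^{2}(\mathcal{M},\varphi)$ for a fixed faithful normal state $\varphi$. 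The outcome is a unitary $U$ with $E(USU^{*})=A$, so $T:=USU^{*}\in\mathcal{O}(S)$ does the job.

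The step I expect to be the main obstacle is the endpoint peeling. Unlike the type $II_{1}$ situation, the extreme points of the spectrum genuinely cannot be ignored: the ``reserved head and tail'' device of Section 5 has no room to act once $A$ saturates $\operatorname{conv}(\sigma(S))$, and (2)(b) is precisely the residual obstruction --- the type $III$ shadow of the essential-point-spectrum conditions in the type $II_{\infty}$ theorem. Checking that, after removing the endpoint eigenprojections, the remaining pair still satisfies the strict-interior hypothesis that feeds the local step, and that this reduction terminates, is where the real work sits; everything else is a transcription of Sections 3--5 with the trace and the spectral scales erased.
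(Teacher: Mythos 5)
The paper gives no argument for this theorem (it is dismissed as a ``simple adaptation'' of Theorem \ref{conj2}), so your proposal has to stand on its own, and it does not: the inference ``\,$\|S\|\in\sigma_{p}(T)$, which passes to $S$ since $T\in\mathcal{O}(S)$\,'' is precisely what fails in a type $III$ factor, and it contradicts the description of $\mathcal{O}(S)$ you yourself give in your first paragraph. In a countably decomposable type $III$ factor all nonzero projections are equivalent, so the norm-closed unitary orbit of a positive operator remembers only $\sigma(S)$ as a set; there is no trace or rank obstruction of the kind that protects endpoint eigenvalues in $\mathcal{B}(\mathcal{H})$ or in the type $II_{\infty}$ theorem whose ``shadow'' you invoke. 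Concretely, if $\sigma(S)=[0,1]$ with no eigenvalue at $1$ and $T$ is any positive operator with $\sigma(T)=[0,1]$ and a nonzero eigenprojection at $1$, then cutting $[0,1]$ into subintervals of length at most $\epsilon$ and matching the (nonzero, hence equivalent) spectral projections of $S$ and $T$ by a single unitary gives $\|USU^{*}-T\|\leq 2\epsilon$, so $T\in\mathcal{O}(S)$ although $1\notin\sigma_{p}(S)$. Thus point spectrum at the edge is not an invariant of $\mathcal{O}(S)$ and your proof of (1)$\Rightarrow$(2)(b) collapses at its last step.

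The difficulty is not merely expository: the equivalence as printed (reading $E(T)=A$ in (1)) cannot be proved, so no transcription of Sections 3--5 will close it. Take $A=I$ and $S$ a nontrivial projection: (2)(a) holds, and (2)(b) holds because $1=\|S\|$ is an eigenvalue of both $A$ and $S$, yet every $T\in\mathcal{O}(S)$ satisfies $0\leq T\leq I$, so $E(T)=I$ forces $T=I$ by faithfulness of $E$ (automatic for a normal expectation onto a masa, as you note), and $I\notin\mathcal{O}(S)$ because $\sigma(I)=\{1\}\neq\{0,1\}$. In the other direction, once a correct sufficiency statement is available in corners, (1) does not imply (2)(b) either: if $P=E_{A}(\{\|S\|\})$ is a proper nonzero projection, solve in $(I-P)\mathcal{M}(I-P)$ for a positive $T'$ with $E(T')=A(I-P)$ and $\sigma(T')=\sigma(S)$, and then $T=\|S\|P\oplus T'$ lies in $\mathcal{O}(S)$ with $E(T)=A$ even when $\|S\|\notin\sigma_{p}(S)$. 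The true endpoint obstruction has a different shape: the eigenprojections of $A$ at $\|S\|$ and at $\alpha(S)$ force matching eigenprojections of $T$, and this is only an obstruction when the forced pieces exhaust $I$ (for instance $A=\|S\|I$ forces $S=\|S\|I$, and $A=I$ with $S$ a nontrivial projection is the case above); condition (2)(b) as stated is neither necessary nor, together with (a), sufficient. So both the statement and your endpoint-peeling step need to be reformulated before the remainder of your outline, which is otherwise a reasonable adaptation of the $II_1$ machinery, can be carried through.
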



\end{document}